\numberwithin{equation}{section}
\DeclareFontFamily{OMS}{rsfs}{\skewchar\font'60}
\DeclareFontShape{OMS}{rsfs}{m}{n}{<-5>rsfs5 <5-7>rsfs7 <7->rsfs10 }{}
\DeclareSymbolFont{rsfs}{OMS}{rsfs}{m}{n}
\DeclareSymbolFontAlphabet{\scr}{rsfs}
\newtheorem{Theorem}[equation]{Theorem}
\newtheorem{Lemma}[equation]{Lemma}
\newtheorem{Proposition}[equation]{Proposition}
\newtheorem{Corollary}[equation]{Corollary}
\theoremstyle{definition}
\newtheorem{Definition}[equation]{Definition}
\newtheorem{Remark}[equation]{Remark}
\newcommand{\too}{\longrightarrow}
 \newcommand{\ainfty}{A_{\infty}}
\DeclareMathOperator{\Alg}{Alg}
\DeclareMathOperator{\CAlg}{CAlg}
\DeclareMathOperator{\Aut}{Aut}
\DeclareMathOperator{\End}{End}
\newcommand{\B}{\mathcal{B}}
\DeclareMathOperator*{\colim}{colim}
\newcommand{\eqdef}{\overset{\text{def}}{=}}
\newcommand{\einfty}{E_{\infty}}
\newcommand{\EKMM}{\mathcal{M}}
\newcommand{\GLsym}{GL_{1}}
\newcommand{\GL}[1]{\GLsym #1}
\newcommand{\heq}{\simeq}
\DeclareMathOperator{\Ho}{ho}
\renewcommand{\i}{\infty}
\DeclareMathOperator{\id}{id}
\renewcommand{\L}{\mathbb{L}}
\newcommand{\Mgeo}{M_\mathrm{geo}}
\newcommand{\Malg}{M_\mathrm{alg}}
\newcommand{\ptspace}{\ast}
\newcommand{\cf}{\textrm{cf}}
\newcommand{\Rmod}{\Mod{R}}
\newcommand{\slot}{\,-\,}
\newcommand{\spaces}{\TT}
 \newcommand{\sinf}{\Sigma^{\infty}}
\newcommand{\splus}{\sinf_{+}} 
\newcommand{\Smash}{\wedge}
\newcommand{\xra}[1]{\xrightarrow{#1}}
\let\catsymbfont\mathcal
\newcommand{\bL}{{\mathbb{L}}}
\newcommand{\C}{\mathcal{C}}
\newcommand{\D}{\mathcal{D}}
\newcommand{\N}{\mathrm{N}}
\newcommand{\TT}{\mathcal{T}}
\newcommand{\aC}{{\catsymbfont{C}}}
\newcommand{\aM}{\EKMM}
\newcommand{\sL}{{\mathcal{L}}}
\newcommand{\sma}{\wedge}    
\newcommand{\htp}{\simeq}
\newcommand{\sI}{\scr{I}}
\newcommand{\HTT}[1]{[HTT, #1]}
\newcommand{\HA}[1]{[HA, #1]}
\DeclareMathOperator{\Fun}{Fun}
\DeclareMathOperator{\map}{map}
\DeclareMathOperator{\op}{op}
\DeclareMathOperator{\orient}{Orientations}
\DeclareMathOperator{\Cat}{Cat}
\DeclareMathOperator{\Gpd}{Gpd}
\DeclareMathOperator{\Map}{Map}
\DeclareMathOperator{\Pre}{Pre}
\DeclareMathOperator{\Set}{Set}
\DeclareMathOperator{\Stab}{Stab}
\newcommand{\R}{\mathbb{R}}
\newcommand{\Rorient}{\orient_{R}}
\newcommand{\Mod}[1]{{#1}\text{-}\mathrm{mod}}
\newcommand{\Top}{\mathrm{Top}}
\newcommand{\Sing}[1]{\SingOp #1}
\newcommand{\SingOp}{}
\newcommand{\Line}[1]{{#1}\text{-}\mathrm{line}}
\newcommand{\Triv}[1]{{#1}\text{-}\mathrm{triv}}
\newcommand{\Rwe}{\Line{R}}
\newcommand{\Rtriv}{\Triv{R}}
\begin{document}

\title[Thom spectra via $\infty$-categories]{An $\infty$-categorical approach to $R$-line bundles, $R$-module
  Thom spectra, and Twisted $R$-Homology}

\author[Ando]{Matthew Ando}
\address{Department of Mathematics \\
The University of Illinois at Urbana-Champaign \\
Urbana IL 61801 \\
USA} \email{mando@illinois.edu}

\author[Blumberg]{Andrew J. Blumberg}
\address{Department of Mathematics \\University of Texas \\
Austin TX  78712}
\email{blumberg@math.utexas.edu}

\author[Gepner]{David Gepner}
\address{Purdue University\\ West Lafayette IN 47907}
\email{dgepner@purdue.edu}

\author[Hopkins]{Michael J.~Hopkins}
\address{Department of Mathematics \\
Harvard University \\
Cambridge MA 02138}
\email{mjh@math.harvard.edu}

\author[Rezk]{Charles Rezk}

\address{The University of Illinois at Urbana-Champaign \\
Urbana IL 61801 \\
USA}
\email{rezk@illinois.edu}

\thanks{Ando was supported by NSF grants DMS-0705233 and DMS-1104746.
Blumberg was 
  partially supported by an NSF Postdoctoral Research Fellowship and
  by NSF grant DMS-0906105.  Gepner was supported by EPSRC grant
  EP/C52084X/1.  Hopkins was supported by the NSF.  Rezk was supported
  by NSF grants DMS-0505056 and DMS-1006054.}

\begin{abstract}
We develop a generalization of the theory of Thom spectra using the
language of $\i$-categories.  This treatment exposes the
conceptual underpinnings of the Thom spectrum functor: we use a
new model of parametrized spectra,  and our definition is motivated by
the geometric definition of Thom spectra of May-Sigurdsson.  For an $A_\infty$ ring spectrum $R$, we
associate a Thom spectrum to a map of $\i$-categories from the
$\i$-groupoid of a space $X$ to the $\i$-category of free rank
one $R$-modules, which we show
is a model for $B\GL{R}$; we show that $B\GL{R}$
classifies homotopy sheaves of rank one $R$-modules, which we call
$R$-line bundles.  We use our $R$-module Thom spectrum to define the
twisted $R$-homology and cohomology of an $R$-line bundle over a space classified by a map $X\to B\GL{R}$, and we recover the generalized theory of orientations in this context.  In order to
compare this approach to the classical theory, we characterize the
Thom spectrum functor axiomatically, from the perspective of Morita theory.
\end{abstract}

\maketitle

\section{Introduction}
\label{sec:prol-thom-isom}

In the companion to this paper \cite{units-sma}, we review and extend
the work of \cite{MQRT:ersers} on Thom spectra and orientations, using
the theory of structured ring spectra.  To an $A_{\infty}$ ring
spectrum $R$ we associate a space $B\GL{R}$, and to a map of spaces
$f\colon X\to B\GL{R}$ we associate an $R$-module Thom spectrum $Mf$
such that $R$-module orientations $Mf\to R$ correspond to
null-homotopies of $f$.  

Letting $S$ denote the sphere spectrum, one finds that $B\GL{S}$ is
the classifying space for stable spherical fibrations, and if $f$ factors as 
\[
f\colon X \xra{g} B\GL{S} \xra{} B\GL{R},
\]
then $Mg$ is equivalent to the usual Thom spectrum of the
spherical fibration classified by $g$, $Mf \heq Mg \Smash R$, and
$R$-module orientations $Mf\to R$ correspond to classical orientations
$Mg \to R$.

Rich as it is, the classical theory has a number of shortcomings.  For
example, one expects Thom spectra as above to arise from bundles
of $R$-modules.  However, in the
approaches of \cite{units-sma} as well as \cite{MQRT:ersers,LMS:esht},
such a bundle theory is more a source of inspiration than of concrete
constructions or proofs.    A related problem is that, with the
constructions in \cite{units-sma,MQRT:ersers,LMS:esht}, it is
difficult to identify the functor represented by the homotopy type $B\GL{R}.$
The parametrized homotopy theory  of \cite{MR2271789} is one approach to the
bundles of $R$-modules we have in mind, but the material on Thom spectra
in that work focuses on spherical fibrations, and the discussion of 
twisted generalized cohomology in \S22 of that book requires a 
model for $\GL{R}$ which is a genuine topological monoid, a situation which may
not arise from the ambient geometry.

In this paper, we introduce a new approach to parametrized spaces and
spectra, Thom spectra, and orientations, based on the theory of $\i$-categories.  Our treatment has a number of attractive features.  
We use a simple theory of parametrized
spectra as homotopy local systems of spectra.
We give a model for $B\GL{R}$ which, by construction, evidently
classifies homotopy local systems of free 
rank-one $R$-modules.
Using this model, we are able to give an
intuitive and effective construction of the Thom spectrum.
Our Thom spectrum functor is an $\i$-categorical left adjoint,
and so clearly commutes with homotopy colimits, and comes with an
obstruction theory for orientations. We also discuss an axiomatic
approach to the theory of generalized Thom spectra which allows us
easily to check that our construction specializes to the other existing constructions, such as \cite{LMS:esht}.  

To begin, let's consider spaces over a space $X$.  
Since the singular complex functor from spaces to simplicial sets
induces an equivalence of $\i$-categories (where the latter is
equipped with the Kan model structure), and both are equivalent to the
$\i$-category of $\i$-groupoids, we will typically not distinguish
between a space $X$ and its singular complex $\mathrm{Sing}(X)$.  We
will also use the term ``fundamental $\i$-groupoid of $X$'' for any
$\i$-category equivalent to $\mathrm{Sing} (X)$.   In particular, we
may view spaces as $\i$-groupoids, and hence as $\i$-categories.
Moreover, the $\i$-category $\spaces$ of spaces is the prototypical
example of an $\i$-topos, so that for any space $X$, there is a
canonical equivalence
\[
\Fun(X^{\op},\spaces)\longrightarrow{\spaces}_{/X}
\]
between the $\i$-categories of presheaves of spaces on $X$ and spaces
over $X$~\cite[2.2.1.2]{HTT}.  Thus, the $\i$-category $\Fun
(X^{\op},\spaces)$ is a model for the $\i$-category of spaces over
$X$.

Note that, since $X$ is an $\i$-groupoid, there is a canonical
contractible space of equivalences $X\simeq X^{\op}$, and so of 
equivalences
\[
\Fun(X,\spaces)\longrightarrow\Fun(X^{\op},\spaces).
\]
We prefer to use $\Fun (X^{\op},\spaces)$ to emphasize the analogy
with presheaves.

Now let $R$ be a ring spectrum.  We say that an $R$-module $M$
is free of rank one if there is an equivalence of $R$-modules $M
\to R$, and we write $\Line{R}\subset\Mod{R}$ for the subcategory
consisting of the free rank one $R$-modules and the equivalences
thereof.  By construction, $\Line{R}$ is an $\i$-groupoid, i.e., a
Kan complex.   In a precise sense which we now explain, $\Mod{R}$
classifies bundles of $R$-modules, and $\Line{R}$ classifies bundles
of free rank one $R$-modules whose fibers  are glued together by
$R$-linear equvialences.

Given a space $X$, a functor  (i.e., a map of simplicial sets)
\[
L:X^{\op}\longrightarrow\Mod{R}
\]
is a sort of local coefficient system:
for each point $p\in X$, we have a $R$-module $L_p$.
To a path $\gamma: p \to q$ in $X$, $L$
associates an 
equivalence of $R$-modules
\begin{equation}\label{eq:3}
   L_{\gamma}: L_{q} \heq L_{p}.
\end{equation}
From a homotopy of paths $h: \gamma\to \gamma'$, we get 
a path
\begin{equation}\label{eq:4}
    L_{h}: L_{\gamma'} \to L_{\gamma}
\end{equation}
in the space of $R$-module equivalences $L_{p}\to L_{q},$ and so
forth for higher homotopies.  More precisely, $L$ is a
``homotopy local system'' of $R$-modules.  The fact that the data of a
functor from $X^{\op}$ to $\Mod{R}$ are precisely the higher coherence
conditions for a homotopy local system of $R$-modules is what makes
the theory of 
$\i$-categories so effective in this context.  With this in mind, we
make the following definition. 

\begin{Definition}[\S\ref{sec:glr-symm-mono}]
Let $X$ be a space.  
A \emph{bundle of $R$-modules over $X$} is a functor 
\[
   f: X^{\op} \too \Mod{R}.
\]
A \emph{bundle of $R$-lines over $X$} is a functor 
\[
   f: X^{\op} \too \Line{R}.
\]
We write $\Fun(X^{\op},\Rmod)$ and $\Fun(X^{\op},\Rwe)$ for the $\i$-categories of
bundles of $R$-modules and  $R$-lines over $X$.
\end{Definition} 

Our definition of the Thom spectrum is motivated by the May-Sigurdsson
description of the ``neo-classical'' Thom spectrum as the composite of the pullback of a
universal parametrized spectrum followed by the base change along the
map to a point \cite[23.7.1,23.7.4]{MR2271789}.   

\begin{Definition}[\S\ref{sec:thom-r-modules}]\label{def-7}
Let $X$ be a space.  The Thom $R$-module spectrum $Mf$ of a
bundle of $R$-lines over $X$
\[
   f\colon X^{\op} \too \Rwe
\]
is the colimit of the functor
\[
   X^{\op}  \overset{f}{\too} \Rwe \too \Rmod.
\]
obtained by composing with the inclusion $\Rwe\subset\Rmod$.
\end{Definition}

It is very easy to describe the obstruction theory for orientations in this setting.  
The colimit in Definition
\ref{def-7} means that the space of $R$-module maps 
\[
   Mf \too R
\]
is equivalent to the space of maps of bundles of $R$-modules
\[
   f \to R_{X},
\]
where $R_{X}$ denotes the trivial bundle of $R$-lines over $X$, i.e. the constant functor $X^{\op}\to\Rwe$ with value $R\in\Rwe$.

\begin{Definition}[Definition \ref{inf-def-2}]
The space of \emph{orientations} $Mf \to R$ is the pull-back in the
diagram 
\[
\xymatrix{
\orient (Mf,R) \ar[r]\ar[d]_\simeq & \map_{\Mod{R}} (Mf,R)\ar[d]^\simeq \\
\map_{\Line{R_X}}(f,R_{X}) \ar[r] & \map_{\Mod{R_X}} (f,R_{X}).}
\]
That is, orientations $Mf\to R$ are those $R$-module maps which
correspond to trivializations $f \heq R_{X}$
of the bundle of $R$-lines $f$.
\end{Definition}

Put another way, let $\Rtriv$ be the
$\infty$-category of trivialized $R$-lines: $R$-lines $L$ equipped with
a specific equivalence $L \xra{\heq}R$.  $\Rtriv$ is a contractible Kan complex, and
the natural map
\begin{equation}\label{eq:10}
     \Rtriv \rightarrow \Rwe
\end{equation}
is a Kan fibration.  
We then have the following.

\begin{Theorem}[Theorem \ref{t-th-or-thy-infty-lifting}]
If $f\colon \Sing{X^{\op}}\to \Line{R}$ is a bundle of $R$-lines over $X$, then the
space of orientations $Mf \to R$ is equivalent to the space of lifts in
the diagram 
\begin{equation} \label{eq:53}
\xymatrix{ & {\Rtriv}
 \ar[d] \\
{\Sing{X}} \ar@{-->}[ur] \ar[r]_-{f} & {\Rwe}.}
\end{equation}
\end{Theorem}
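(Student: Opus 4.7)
The argument chains together two identifications, each an unwinding of a universal property.

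First, by the pullback definition of $\orient(Mf,R)$, whose left vertical arrow is marked as an equivalence, together with the universal property of the colimit defining $Mf$ in Definition~\ref{def-7} (so that $\map_{\Rmod}(Mf,R) \simeq \map_{\Fun(\Sing{X^{\op}},\Rmod)}(f, R_X)$), we obtain
\[
\orient(Mf,R) \;\simeq\; \map_{\Fun(\Sing{X^{\op}},\Rwe)}(f, R_X),
\]
where $R_X$ is the constant functor at $R$. Since $\Sing{X}$ is an $\infty$-groupoid, the canonical equivalence $\Sing{X} \simeq \Sing{X^{\op}}$ lets us replace the functor category with $\Fun(\Sing{X},\Rwe)$. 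It therefore suffices to produce a natural equivalence between $\map_{\Fun(\Sing{X},\Rwe)}(f, R_X)$ and the space of lifts in~(\ref{eq:53}).

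Second, $\Rtriv$ is a model for the overcategory $(\Rwe)_{/R}$: an object of $\Rtriv$ is a pair $(L, L \xra{\simeq} R)$, which is precisely the data of a morphism in $\Rwe$ with target $R$. Under this identification, the forgetful map $\Rtriv \to \Rwe$ is the canonical right fibration; it is a Kan fibration because $\Rwe$ is a Kan complex, and contractibility of $\Rtriv$ reflects the fact that $(\Rwe)_{/R}$ has a terminal object, namely $\mathrm{id}_R$. Pulling back along $f$ yields a Kan fibration $f^*\Rtriv \to \Sing{X}$, and the space of lifts in~(\ref{eq:53}) is by definition the space of its sections, equivalently the strict fiber of $\map(\Sing{X},\Rtriv) \to \map(\Sing{X},\Rwe)$ over $f$. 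By straightening/unstraightening for right fibrations over the $\infty$-groupoid $\Sing{X}$ (with values in spaces), sections of $f^*\Rtriv \to \Sing{X}$ correspond to natural transformations from $f$ to the constant functor $R_X$ in $\Fun(\Sing{X},\Rwe)$; concretely, a lift specifies, coherently in $x \in \Sing{X}$, an equivalence $f(x) \xra{\simeq} R$, which is exactly the data of a natural transformation $f \to R_X$. Composing with the first identification proves the theorem.

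\textbf{Main obstacle.} The argument is largely tautological once one has identified $\Rtriv$ with the overcategory $(\Rwe)_{/R}$ and invoked straightening/unstraightening for right fibrations. Both ingredients are standard; the only care required is bookkeeping of op-vs-not (harmless since $\Sing{X}$ is an $\infty$-groupoid) and choosing a model in which $\Rtriv \to \Rwe$ is a literal Kan fibration between Kan complexes, so that the space of lifts is realized as the strict fiber asserted above. The colimit-to-mapping-space identification feeding the first step is the universal property built into Definition~\ref{def-7} and so requires no additional argument.
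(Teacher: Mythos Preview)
Your proposal is correct and follows essentially the same approach as the paper: identify $\Rorient(Mf,R)$ with $\map_{\Line{R_X}}(f,R_X)$ via the defining pullback square, identify $\Rtriv$ with the slice $(\Rwe)_{/R}$, and then recognize the space of lifts as the fiber of $\Fun(X^{\op},\Rtriv)\to\Fun(X^{\op},\Rwe)$ over $f$, which is that same mapping space.

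The only difference is packaging in the second step. You invoke straightening/unstraightening to identify sections of $f^{*}\Rtriv\to\Sing{X}$ with natural transformations $f\to R_X$; the paper instead uses the elementary identification $\Fun(X^{\op},(\Rwe)_{/R})\simeq\Fun(X^{\op},\Rwe)_{/R_X}$ (Corollary~\ref{triv(f)=triv(f^*L)}) and then reads off the fiber of the slice projection as the mapping space directly. Your route is not wrong, but it is more machinery than the situation requires: once you have written the space of lifts as the strict fiber of $\map(\Sing{X},\Rtriv)\to\map(\Sing{X},\Rwe)$ over $f$, the slice description of mapping spaces finishes the argument without any appeal to (un)straightening.
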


\noindent Analogous considerations lead to a version of the Thom isomorphism in
this setting.

Finally, using this notion of $R$-module Thom spectrum, we can define
the twisted $R$-homology and $R$-cohomology of a space $f\colon
X\to\Rwe$ equipped with an $R$-line bundle by the formulas 
\begin{align}
R^f_n(X) =&\,\pi_0\map_R(\Sigma^n R,Mf)\cong\pi_n Mf\\
R_f^n(X) =&\,\pi_{0}\map_R(Mf,\Sigma^n R).
\end{align}

In the presence of an orientation, we have the following untwisting
result.

\begin{Corollary}
If $f\colon X^{\op}\to\Rwe$ admits an orientation, then $Mf\simeq
R\land\Sigma^\infty_+ X$, and the twisted $R$-homology and
$R$-cohomology spectra 
\begin{align}
R^f(X)\simeq &\, R\land\Sigma^\infty_+ X\\
R_f(X)\simeq &\, \Map(\Sigma^\infty_+ X,R)
\end{align}
reduce to the ordinary $R$-homology and $R$-cohomology spectra of $X$ (here $\Map$ denotes the function spectrum).
\end{Corollary}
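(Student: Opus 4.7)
The plan is to use the orientation theorem to reduce to the case of the trivial bundle, compute the Thom spectrum of the trivial bundle as the tensor of $R$ with $X$ in $\Rmod$, and then substitute into the defining formulas for twisted homology and cohomology.

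The first step invokes Theorem~\ref{t-th-or-thy-infty-lifting}: an orientation of $Mf$ corresponds to a lift of $f\colon \Sing{X}\to\Rwe$ through the Kan fibration $\Rtriv\to\Rwe$ of~\eqref{eq:10}. Since $\Rtriv$ is a contractible Kan complex, any such lift determines an equivalence $f\simeq R_X$ in $\Fun(X^{\op},\Rwe)$, where $R_X$ is the constant functor at $R$. Because $Mf$ is defined as a colimit and is thus functorial in $f$, one obtains an equivalence $Mf\simeq M(R_X)$.

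The second step computes $M(R_X)$ directly from Definition~\ref{def-7}. By definition $M(R_X) = \colim_{X^{\op}} R$ in $\Rmod$. Since $X$ is an $\infty$-groupoid we have $X\simeq X^{\op}$, and the colimit of a constant diagram indexed by an $\infty$-groupoid is the tensor (copower) of that groupoid with the value. In $\Rmod$ this tensor is
\[
X\otimes R \simeq R\sma\Sigma^{\infty}_{+} X,
\]
as one verifies from the Yoneda lemma: for any $R$-module $N$ there are natural equivalences
\[
\map_{\Rmod}(R\sma\Sigma^{\infty}_{+} X,N)\simeq\map(\Sigma^{\infty}_{+} X,N)\simeq\map(X,\linf N)\simeq\map(X,\map_{\Rmod}(R,N)),
\]
which is exactly the universal property of $\colim_{X^{\op}} R$ in $\Rmod$.

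The final step is routine substitution. Setting $Mf\simeq R\sma\Sigma^{\infty}_{+} X$ in the defining formulas for the twisted homology and cohomology spectra gives
\[
R^f(X) \simeq Mf \simeq R\sma\Sigma^{\infty}_{+} X
\]
and, using the free-forgetful adjunction $\map_{\Rmod}(R\sma Y,R)\simeq\map(Y,R)$,
\[
R_f(X) \simeq \Map_R(Mf,R) \simeq \Map(\Sigma^{\infty}_{+} X,R),
\]
as claimed. The only nontrivial content is the identification of the tensor in $\Rmod$ with $R\sma\Sigma^{\infty}_{+} X$, which is standard; the result is essentially immediate from the fact that an orientation trivializes the $R$-line bundle combined with the universal property of the Thom spectrum as a colimit, so no real obstacle arises.
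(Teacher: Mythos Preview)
Your proof is correct and follows essentially the same approach as the paper. The paper's proof simply cites Corollary~\ref{t-thom-iso-infty-cat-I} for the equivalence $Mf\simeq R\sma\Sigma^{\infty}_{+}X$ and then substitutes; you instead re-derive that equivalence by invoking Theorem~\ref{t-th-or-thy-infty-lifting} and computing $M(R_X)$ as a copower via Yoneda, which is exactly the content of Corollary~\ref{t-thom-iso-infty-cat-I} together with Proposition~\ref{t-triv-is-smash} unpacked.
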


In Section~\ref{sec:comp-thom-spectra} we relate the theory developed
in this paper to other approaches, such as 
\cite{MQRT:ersers,LMS:esht,units-sma}.   We rely on the fact that
$\Line{R}$ is a model for $B\GL{R}$.  Indeed, the fiber of 
\eqref{eq:10} at $R$ is $\Aut_{R} (R)$, by which we mean the derived
space of $R$-linear self-homotopy equivalences of $R$ (e.g.,
see~\cite[\S 2]{units-sma}).  More precisely, we have the following.

\begin{Corollary}[Corollary~\ref{cor:gl-kan-fib}]
The Kan fibration 
\begin{equation}\label{eq:11}
   \Aut_R(R) \xra{} \Rtriv \xra{} \Rwe
\end{equation}
is a model in simplicial sets for the quasifibration $\GL{R}\to
E\GL{R}\to B\GL{R}$.
\end{Corollary}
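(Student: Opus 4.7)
The plan is to verify each of the three ingredients of the corollary — the fiber, the total space, and the base — and then to assemble them into the asserted equivalence of fibration sequences.

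First I would identify the fiber of the map $\Rtriv \to \Rwe$ over the basepoint $R \in \Rwe$. By the definition of $\Rtriv$, a point in $\Rtriv$ lying over $R$ is precisely an $R$-module equivalence $R \xrightarrow{\heq} R$, and the space of such is by definition $\Aut_R(R)$. Higher simplices lying over the constant simplex at $R$ likewise unwind to simplices in $\Aut_R(R)$, giving the identification of fibers. The fact that \eqref{eq:10} is a Kan fibration is already asserted in the text, so the sequence in \eqref{eq:11} is a fiber sequence of Kan complexes.

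Next I would show that $\Rtriv$ is contractible. The key observation is that $\Rtriv$ can be described as the slice $\i$-groupoid $(\Rwe)_{/R}$ of $R$-lines equipped with an equivalence to $R$. This slice has the object $(R,\id_R)$ as a terminal object in the $\i$-categorical sense, and any $\i$-groupoid with a terminal object is contractible. Concretely, the evident map from $(R,\id_R)$ into $\Rtriv$ is a deformation retract: for any trivialized line $(L,\phi)$, the equivalence $\phi$ itself provides a canonical path to $(R,\id_R)$, and coherences between such paths are generated by the coherent data already present in $\Rwe$. Thus $\Rtriv$ has the homotopy type of a point and models $E\GL{R}$.

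For the base, I would show that $\Rwe$ is connected and compute its loop space. Connectivity is immediate: by definition every object of $\Line{R}$ is equivalent (as an $R$-module) to $R$. The based loop space $\Omega_R \Rwe$ is then computed from the fiber sequence \eqref{eq:11} by the long exact sequence: since $\Rtriv$ is contractible, $\Omega_R\Rwe \heq \Aut_R(R) = \GL{R}$. Hence $\Rwe$ is a connected delooping of $\GL{R}$, i.e.\ a model for $B\GL{R}$.

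The main obstacle, and the step requiring the most care, is the last one: matching the $A_\infty$ (and, when applicable, $E_\infty$) structure on $\GL{R}$ inherited from its appearance as the loop space of $\Rwe$ with the $A_\infty$ structure coming from the explicit construction of $B\GL{R}$ in \cite{units-sma}. This is handled by observing that both structures arise from composition of $R$-module self-equivalences of $R$: in $\Rwe$ they appear as composition of $1$-morphisms in an $\i$-groupoid, and in \cite{units-sma} they appear as multiplication in the monoid of derived $R$-linear self-equivalences. Once this identification of multiplicative structures is in hand, the uniqueness of classifying spaces for $A_\infty$ groups identifies $\Rwe$ with $B\GL{R}$ as delooped $A_\infty$-spaces, and the contractible Kan fibration $\Rtriv\to\Rwe$ must then be equivalent to the universal quasifibration $E\GL{R}\to B\GL{R}$, as claimed.
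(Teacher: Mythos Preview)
Your argument is correct and covers the same ground as the paper, but you organize the identification of the base differently. The paper establishes $\Rwe \simeq B\GL{R}$ beforehand (Proposition~\ref{prop:Rline=BAutR}) by observing that the inclusion of the full subgroupoid $B\Aut_R(R)$ on the single object $R$ into $\Rwe$ is fully faithful and essentially surjective, and that $\Aut_R(R)\simeq\GL{R}$ because both are cut out of $\End_R(R)\simeq\Omega^\infty R$ by the same invertibility pullback; the corollary then follows in one line from this together with the contractibility of $\Rtriv$ (Lemma~\ref{inf-t-pr-egl-bgl-fib}). Your route instead extracts $\Omega_R\Rwe$ from the fiber sequence and then invokes uniqueness of deloopings after matching $A_\infty$ structures by hand. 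This is fine, but the paper's direct identification is cleaner: once $\Rwe$ is literally recognized as (equivalent to) the one-object $\i$-groupoid $B\Aut_R(R)$, the monoidal comparison comes for free and your ``main obstacle'' evaporates.
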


\begin{Remark}
In fact, since geometric realization carries Kan fibrations to Serre
fibrations~\cite{MR0238322}, upon geometric realization we obtain a
Serre fibration which models  
\[
   \GL{R} \to E\GL{R} \to B\GL{R}
\]
in topological spaces.  The approach taken in
\cite{MQRT:ersers,units-sma} is only known to produce a
quasifibration. 
\end{Remark}

The equivalence $B\Aut (R) \heq \Rwe$
implies the following description of the Thom spectrum functor of
Definition \ref{def-7}, which 
plays a role in \S\ref{sec:comp-thom-spectra} when we compare our
approaches to Thom spectra.  Recall that if $x$ is an object in an
$\i$-category $\aC$, then $\Aut_{\aC} (x)$, is a 
group-like monoidal $\i$-groupoid, that is, a group-like $\ainfty$
space; conversely if $G$ is a group-like monoidal $\i$-groupoid, then
we can form the $\i$-category $BG$ with a single object $*$ and $G$ as
automorphisms.  Moreover, an action of $G$ on $x$
is just a functor $BG\to \aC$.

\begin{Theorem} \label{t-thom-sp-is-R-mod-G-intro}
Let $G$ be a group-like monoidal $\i$-groupoid.  A map $BG\xra{}\Rwe$
specifies an $R$-linear action of $G$ on $R$, and then the Thom
spectrum is equivalent to the 
(homotopy) quotient $R / G$.
\end{Theorem}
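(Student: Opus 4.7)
The plan is to decompose the statement into two assertions: (i) a functor $BG \to \Rwe$ is the same data as an $R$-linear action of $G$ on $R$; and (ii) the Thom spectrum of such a functor is the homotopy orbit $R/G$.

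For (i), I would invoke Corollary~\ref{cor:gl-kan-fib}, which identifies the Kan fibration $\Aut_R(R) \to \Rtriv \to \Rwe$ with a model for $\GL{R} \to E\GL{R} \to B\GL{R}$; in particular $\Rwe \simeq B\Aut_R(R)$. Since every $R$-line is equivalent to $R$, the $\i$-groupoid $\Rwe$ is connected, so a functor $BG \to \Rwe$ is, up to contractible choice, a map of pointed connected $\i$-groupoids $BG \to B\Aut_R(R)$. Looping recovers a map of group-like $\ainfty$ spaces $G \to \Aut_R(R)$, which is exactly the data of an action of $G$ on $R$ by $R$-linear self-equivalences.

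For (ii), Definition~\ref{def-7} describes $Mf$ as the colimit in $\Rmod$ of the composite $BG \xra{f} \Rwe \hookrightarrow \Rmod$. In any cocomplete $\i$-category $\aC$, a functor $BG \to \aC$ consists of an object $x \in \aC$ together with a map of group-like $\ainfty$ spaces $G \to \Aut_{\aC}(x)$, i.e., an action of $G$ on $x$; and the colimit of such a functor is, by definition, the homotopy orbit of that action. Applied to $\aC = \Rmod$ and the object $R$ with the action produced in (i), this yields $Mf \simeq R/G$.

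The principal technical point is (i): promoting the underlying equivalence $\Rwe \simeq B\Aut_R(R)$ to one under which the $\ainfty$ structure on $\Aut_R(R) \simeq \Omega \Rwe$ coming from loop composition coincides with the one coming from composition of $R$-linear equivalences, so that maps $BG \to \Rwe$ really do classify honest $R$-linear $G$-actions. Because Corollary~\ref{cor:gl-kan-fib} already exhibits the equivalence at the level of the Kan fibration, this coherence is essentially built in, and the remaining verification reduces to bookkeeping within the Joyal model structure. The identification of the colimit as $R/G$ in (ii) is then immediate from the characterization of $BG$-indexed colimits as homotopy orbits.
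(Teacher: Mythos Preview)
Your proposal is correct and follows essentially the same line as the paper: both argue that a map $BG\to\Rwe$ determines, via $\Rwe\simeq B\Aut_R(R)$ and looping, an action $G\to\Aut_R(R)$, and that $Mf$ and $R/G$ are then \emph{both by definition} the colimit of $BG^{\op}\to\Rwe\to\Rmod$ (see Proposition~\ref{quotient}). One small remark: your ``principal technical point'' about matching the $\ainfty$ structures on $\Aut_R(R)$ is handled in the paper directly by Proposition~\ref{prop:Rline=BAutR}, which gives $\Aut_R(R)\simeq\GL{R}$ as monoidal $\i$-groupoids; citing that result rather than only Corollary~\ref{cor:gl-kan-fib} disposes of the coherence concern without further bookkeeping.
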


The preceding theorem follows immediately from the construction of the
Thom spectrum, since by definition the quotient in the statement is the
colimit of the map of $\i$-categories $BG \xra{} \Rwe \to \Rmod$.

Turning to the comparisons, the definitions of \cite{units-sma} and
this paper give two constructions of an $R$-module Thom spectrum from
a map $f\colon X \to B\GL{R}$.  Roughly speaking, the ``algebraic''
model studied in \cite{units-sma} takes the pull-back $P$ in the diagram 
\[
\xymatrix{
     P \ar[r] \ar[d] & E\GL{R} \ar[d] \\
      X \ar[r]^-{f} & B\GL{R},
}
\]
and sets 
\begin{equation}\label{eq:5}
   \Malg f  = \splus P \Smash_{\splus \GL{R}} R.
\end{equation}
The ``geometric'' model in this paper sets 
\[
    \Mgeo f = \colim (\Sing{X^{\op}} \xra{f} \Sing B\GL{R} \heq \Rwe \to \Rmod).
\]
It is possible to show by a direct calculation that these two
constructions are equivalent; we do this in section
\ref{sec:algebr-thom-spectr}. 
The bulk of \S\ref{sec:comp-thom-spectra} is concerned with a
more general 
characterization of the Thom
spectrum functor from the point of view of Morita theory.
Here we also show that our Thom spectrum recovers the Thom
spectrum of \cite{LMS:esht} in the special case of a map $f\colon X\to B\GL{S}.$ 

In \eqref{eq:5}, the Thom spectrum $\Malg$ is a derived
smash product with $R$, regarded 
as an $\splus \GL{R}$-$R$ bimodule, specified by the canonical action of
$\splus \GL{R}$ on $R$.  Recalling that the target category of
$R$-modules is stable, we can regard this Thom spectrum as
given by a functor from (right) $\splus \GL{R}$-modules to $R$-modules.
Now, roughly speaking, Morita theory (more precisely, the
Eilenberg-Watts theorem) implies that any continuous functor from
(right) $\splus \GL{R}$-modules to $R$-modules which preserves homotopy
colimits and takes $\GL{R}$ to $R$ can be realized as tensoring with
an appropriate $\splus \GL{R}$-$R$ bimodule.  In particular, this
tells us that the Thom spectrum functor is characterized among such
functors by the additional data of the action of $\GL{R}$ on $R$.

We develop these ideas in the setting of $\i$-categories.  Let
$\spaces$ be the $\i$-category of spaces.  Given a colimit-preserving
functor $F\colon \spaces_{/B\Aut (R)} \to \Rmod$ 
which sends $\ptspace/B\Aut (R)$ to $R$,
we can restrict along the Yoneda embedding \eqref{eq:63}
\[
      B\Aut (R) \xra{} \spaces_{/B\Aut (R)} \xra{F} \Rmod;
\]
since it takes the object of $B\Aut (R)$ to $R$, we may view
this as a functor
\[
   k\colon B\Aut (R) \to B\Aut (R).
\]
Conversely, given an endomorphism $k$ of $B\Aut
(R)$, we get a colimit-preserving functor
\[
    F\colon \spaces_{/B\Aut (R)} \to \Rmod
\]
whose value on $B^{\op} \to B\Aut (R)$ is
\[
   \colim (B^{\op} \xra{} B\Aut (R) \xra{k} B\Aut
   (R)\hookrightarrow  \Rmod).
\]
About this correspondence we prove the following.
\begin{Proposition}[Corollary \ref{t-co-axiom-geom-thom-functor}] \label{t-pr-axiom-thom-intro}
A functor $F$ from the $\i$-category $\spaces_{/B\Aut (R)}$ to the
$\i$-category of $R$-modules is equivalent to the Thom spectrum
functor if and only if it
preserves colimits and its restriction along the Yoneda embedding
\[
B\Aut (R) \to \spaces_{/B\Aut (R)} \xra{F} \Rmod
\]
is
equivalent to the canonical inclusion
\[
  B\Aut (R) \xra{\heq} \Rwe \xra{} \Rmod.
\]
\end{Proposition}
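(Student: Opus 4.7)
The plan is to recognize this as a direct consequence of the universal property of the $\infty$-category of presheaves as free cocompletion. The starting point is the equivalence $\spaces_{/B\Aut(R)}\simeq\Fun(B\Aut(R)^{\op},\spaces)=\Pre(B\Aut(R))$, which is available because $B\Aut(R)$ is an $\infty$-groupoid (so straightening/unstraightening applies and gives the equivalence already cited earlier in the paper from \HTT{2.2.1.2}). Under this identification, the composite $B\Aut(R)\to\spaces_{/B\Aut(R)}$ appearing in the statement is the $\infty$-categorical Yoneda embedding into $\Pre(B\Aut(R))$.

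Next I would invoke the universal property of $\Pre$, namely \HTT{5.1.5.6}: for any cocomplete $\infty$-category $\D$, restriction along the Yoneda embedding induces an equivalence
\[
   \Fun^{L}(\Pre(B\Aut(R)),\D)\xra{\heq}\Fun(B\Aut(R),\D),
\]
where $\Fun^{L}$ denotes the full subcategory of colimit-preserving functors. Taking $\D=\Rmod$, which is presentable and in particular cocomplete, this says that any colimit-preserving functor out of $\spaces_{/B\Aut(R)}$ to $\Rmod$ is determined up to a contractible space of choices by its restriction along the Yoneda embedding.

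It then remains to check that the Thom spectrum functor $\Mgeo$ itself satisfies the two hypotheses. By Definition \ref{def-7}, $\Mgeo(f)$ is the colimit of $X^{\op}\xra{f}\Rwe\hookrightarrow\Rmod$; equivalently, $\Mgeo$ is the left Kan extension of the canonical inclusion $B\Aut(R)\simeq\Rwe\hookrightarrow\Rmod$ along the Yoneda embedding. In particular $\Mgeo$ preserves colimits (as any left Kan extension that is a left adjoint does, which here follows from presentability of $\Rmod$) and, by the defining property of left Kan extensions along fully faithful functors into a presentable target, its restriction along Yoneda recovers the canonical inclusion on the nose up to equivalence. Hence any $F$ satisfying the hypotheses has the same restriction as $\Mgeo$, and by the universal property $F\simeq\Mgeo$.

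The only non-formal point is verifying the two properties of $\Mgeo$: colimit preservation and the identification of the Yoneda restriction. The first is routine once one notes that the Thom spectrum functor is a left Kan extension into a cocomplete target. The mild subtlety in the second — that the Yoneda image of the single object of $B\Aut(R)$ corresponds, under $\Pre(B\Aut(R))\simeq\spaces_{/B\Aut(R)}$, to the bundle of $R$-lines $\ast\to B\Aut(R)\simeq\Rwe$, whose Thom spectrum is the colimit over a point of the inclusion and hence $R$ itself — is where one should be careful, but it follows immediately from unwinding the equivalence of presheaves with the slice category. Everything else is formal application of \HTT{5.1.5.6}.
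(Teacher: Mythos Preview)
Your proposal is correct and follows essentially the same route as the paper. The paper likewise reduces the statement to the universal property of presheaves \HTT{5.1.5.6} (recorded as Lemma~\ref{t-le-yoneda-ini-colim-preserv} and its Corollary~\ref{t-co-yoneda-inverse}), and separately verifies in Proposition~\ref{t-pr-yoneda-taut} that the restriction of $\Mgeo$ along the Yoneda embedding is the canonical inclusion $\Line{R}\hookrightarrow\Mod{R}$; the corollary then follows immediately, just as in your argument.
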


It follows easily (Proposition \ref{t-pr-thom-spectra-equivalent}) that
the Thom spectrum functors $\Mgeo$ and $\Malg$ are equivalent.
It also follows that, as in Proposition
\ref{t-thom-sp-is-R-mod-G-intro}, the Thom spectrum of a group-like $\ainfty$ map $\varphi\colon G \to \GL{S}$
is the (homotopy) quotient
\[
    \colim (BG^{\op} \to \Rmod) \heq R/G.
\]
This observation is the basis for our comparison with the Thom
spectrum of Lewis and May.
In \S\ref{sec:neo-classical-thom} we show that the Lewis-May Thom
spectrum associated to the map $B\varphi\colon BG \to B\GL{S}$
is a model for the (homotopy) quotient $S/G$.

\begin{Proposition}[Corollary \ref{t-co-LM-thom-comp}]
The Lewis-May Thom spectrum associated to a map
\[
    f\colon B \to B\GL{S}
\]
is equivalent to the Thom spectrum associated by
Definition~\ref{def-7} to the map of $\infty$-categories 
\[
     \Sing{B}^{\op} \xra{\Sing{f}}  \Sing{B\GL{S}} \heq \Line{S}.
\]
\end{Proposition}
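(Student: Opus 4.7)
The plan is to invoke the axiomatic characterization of Proposition~\ref{t-pr-axiom-thom-intro} (with $R = S$): any colimit-preserving functor $F\colon \spaces_{/B\GL{S}} \to \Mod{S}$ whose restriction along the Yoneda embedding $B\GL{S} \hookrightarrow \spaces_{/B\GL{S}}$ recovers the canonical inclusion $B\GL{S} \heq \Line{S} \hookrightarrow \Mod{S}$ must be equivalent to the geometric Thom spectrum functor $\Mgeo$ of Definition~\ref{def-7}. The task therefore reduces to exhibiting the Lewis-May construction as such a functor.

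First, I would show that the Lewis-May Thom spectrum extends to an $\i$-functor $\spaces_{/B\GL{S}} \to \Mod{S}$ which preserves colimits. On the point-set level, the Lewis-May Thom spectrum of $f\colon B \to B\GL{S}$ is obtained by pulling back a universal parametrized spectrum along $f$ and then pushing forward to a point, a composite of left adjoints in the parametrized stable homotopy theory of \cite{MR2271789}; colimit preservation follows. Second, I would identify the Lewis-May functor on the Yoneda image. Any map of the form $B\varphi\colon BG \to B\GL{S}$ arising from a group-like $\ainfty$ map $\varphi\colon G \to \GL{S}$ gets sent by $\Mgeo$ to the homotopy quotient $S/G$, by Theorem~\ref{t-thom-sp-is-R-mod-G-intro}. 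The content of \S\ref{sec:neo-classical-thom} is that the Lewis-May Thom spectrum of $B\varphi$ is also a model for $S/G$. Specializing to $G = \GL{S}$ shows in particular that the Yoneda restriction of the Lewis-May functor is the canonical map $B\GL{S} \heq \Line{S} \hookrightarrow \Mod{S}$, and Proposition~\ref{t-pr-axiom-thom-intro} then yields the claimed equivalence for every $f\colon B \to B\GL{S}$.

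The main obstacle is establishing the identification of the Lewis-May Thom spectrum of $B\varphi$ with $S/G$. This requires translating between the point-set formalism of universes and twisted half-smash products used in \cite{LMS:esht} and the intrinsic $\i$-categorical notion of a colimit in $\Mod{S}$. The natural strategy is to recognize the Lewis-May Thom spectrum as a two-sided bar construction $B(S, \splus G, \splus)$, and to recognize the homotopy colimit $\colim(BG^{\op} \to \Line{S} \hookrightarrow \Mod{S})$ as the same bar construction, so that the comparison reduces to matching two explicit presentations of the quotient $S/G$. The cofibrancy conditions needed for the Lewis-May construction to have the correct homotopy type are where the point-set work concentrates; once this identification is in place, the axiomatic characterization does the rest.
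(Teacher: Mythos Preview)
Your proposal identifies the correct technical core --- the identification of the Lewis-May Thom spectrum of $B\varphi$ with the homotopy quotient $S/G$ (Proposition~\ref{L-Mquotient}) --- and correctly notes that colimit preservation is essential. However, your route through the axiomatic characterization has a gap in the step where you claim that ``specializing to $G=\GL{S}$'' identifies the Yoneda restriction. The Yoneda embedding $\Line{S}\to\Gpd_{\i/\Line{S}}$ sends the distinguished object $S$ to the point $\ast\to\Line{S}$ (more precisely, the contractible path fibration), \emph{not} to the terminal object $\id_{\Line{S}}$. Thus the $G=\GL{S}$ case of the $S/G$ identification computes $M_{\mathrm{LM}}(\id_{B\GL{S}})\heq S/\GL{S}$, which is the value of the functor on the terminal object of $\Gpd_{\i/\Line{S}}$, not its Yoneda restriction. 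To apply Proposition~\ref{t-pr-axiom-thom-intro} you would instead need to identify the $\GL{S}$-action on $M_{\mathrm{LM}}(\ast\to B\GL{S})\heq S$ induced by the functoriality of the Lewis-May construction on automorphisms of the point over $B\GL{S}$; the quotient $S/\GL{S}$ by itself does not determine that action.

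The paper's proof uses the same two ingredients but combines them more directly, bypassing the axiomatic characterization entirely. Since both $M_{\mathrm{LM}}$ and $\Mgeo$ preserve colimits, one reduces to the case of connected $B\heq BG$; there Propositions~\ref{quotient} and~\ref{L-Mquotient} immediately give $M_{\mathrm{LM}}f\heq S/G\heq\Mgeo f$. This yields the asserted objectwise equivalence without ever having to pin down the Yoneda restriction of the Lewis-May functor. Your approach could be repaired --- one can extract the required $\GL{S}$-action from the bar-construction description of $MF$ worked out in \S\ref{sec:neo-classical-thom} --- but the paper's direct argument is shorter precisely because it avoids that extra identification.
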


\section{Parametrized spectra and Thom spectra}
\label{sec:units-via-infty}

In this section, we show that the theory of $\i$-categories provides a
powerful technical and conceptual framework for the study of Thom
spectra and orientations.  We chose to use the theory of
quasicategories as developed by Joyal and Lurie~\cite{MR1935979,HTT},
but for the theory of $R$-module Thom spectra and orientations, all
that is really required is a good $\i$-category of $R$-modules.

\subsection{$\i$-Categories and $\i$-Groupoids}

\label{sec:i-categories-i}

For the purposes of this paper, an $\i$-category will always
mean a quasicategory in the sense of Joyal \cite{MR1935979}.  This is
the same as a weak Kan complex in the sense of Boardman and Vogt
\cite{MR0420609}; the different terminology reflects the fact that 
these objects simultaneously generalize the homotopy theories of categories and of
spaces.  There is nothing essential in our use of quasicategories, and any other sufficiently well-developed theory of $\infty$-categories (more precisely, $(\infty,1)$-categories) would suffice.

Given two $\i$-categories $\C$ and $\D$, the $\i$-category of functors
from $\C$ to $\D$ is simply the simplicial set of maps from $\C$ to
$\D$, considered as simplicial sets.  More generally, for any simplicial set $X$
there is an $\i$-category of functors from $X$ to $\C$, written $\Fun(X,\C)$;
by \HTT{Proposition 1.2.7.2, 1.2.7.3}, the simplicial set $\Fun(X,\C)$
is a $\infty$-category whenever $\C$ is, even for an arbitrary simplicial set
$X$.

This description of $\Fun(\C,\D)$ gives rise to simplicial
categories of $\i$-categories and $\i$-groupoids.  For our purposes
it is important to have {\em $\i$-categories} $\Cat_\i$ and $\Gpd_\i$
of $\i$-categories and $\i$-groupoids, respectively.  We construct
these $\i$-categories by a general 
technique for converting a simplicial category to an $\i$-category:
there is a simplicial nerve functor $\N$ from simplicial
categories to $\i$-categories which is the right adjoint of a Quillen equivalence $
\mathfrak{C}:\Set_\Delta\rightleftarrows\Cat_\Delta:\N$ \HTT{\S 1.1.5.5, 1.1.5.12, 1.1.5.13}.
Note that this process also gives rise to a standard passage from a simplicial
model category to an $\i$-category which retains the homotopical information encoded by the simplicial model structure.  Specifically, given a
simplicial model category $\mathcal{M}$, one restricts to the simplicial category on the
cofibrant-fibrant objects, $\mathcal{M}^\cf$.  Then applying the simplicial
nerve yields an $\i$-category $\N(\mathcal{M}^\cf)$.

In particular, $\Cat_\i$ is the simplicial nerve of the
simplicial category of $\i$-categories, in which the mapping spaces
are made fibrant by restricting to maximal Kan subcomplexes, and
$\Gpd_\i$ is the full $\i$-subcategory of $\Cat_\i$ on the
$\i$-groupoids.
We recall that the Quillen equivalence between the standard model
structure on topological spaces and the Kan model structure on
simplicial sets induces an equivalence on underlying $\i$-categories.
Thus, as all the constructions we perform in this paper are homotopy
invariant, we will typically regard topological spaces as
$\i$-groupoids via their singular complexes. 

Let $\C$ be an $\infty$-category.
Then $\C$ admits a maximal $\infty$-subgroupoid $\C^{\simeq}$, which is by definition the pullback (in simplicial sets) of the diagram
\begin{equation}\label{eqn:ho}
\xymatrix{
\C^{\simeq}\ar[r]\ar[d] & \C\ar[d]\\
\N\Ho(\C)^{\simeq}\ar[r] & \N\Ho(\C)\,,}
\end{equation}
where $\N\Ho(\C)$ denote the nerve of the homotopy category of $\C$, and $\N\Ho(\C)^{\simeq}$ is the maximal subgroupoid.
Thus, if $a$ and $b$ are objects of
$\C$, $\C^{\simeq}(a,b)$ is the subcategory of $\C(a,b)$
consisting of the {\em equivalences}.

\subsection{Parametrized spaces}

\label{sec:bundles-i-groupoids}

Let $X$ be an $\i$-groupoid, which we view as the fundamental
$\i$-groupoid of a topological space.
There are two canonically equivalent $\i$-topoi associated to $X$;
namely, the slice $\i$-category $\Gpd_{\i/X}$ of $\i$-groupoids over
$X$, and the $\i$-category $\Fun (X^{\op},\Gpd_{\i})$ of presheaves of
$\i$-groupoids on $X$.  The equivalence 
\[
     \Fun (X^{\op},\Gpd_{\i}) \heq \Gpd_{\i/X}
\]
sends a functor to its colimit, regarded as a space over $X$, and may
be regarded as a generalization of the equivalence between (free)
$G$-spaces and spaces over $BG$~\cite[2.2.1.2]{HTT}.
In particular, a terminal object $1\in\Fun(X^{\op},\Gpd_\i)$ must be sent to a terminal object $\id_X\in\Gpd_{\i/X}$, which in this special case recovers the formula
\begin{equation}\label{colim1=X}
   \colim_{X^{\op}} 1 \simeq X.
\end{equation}

\begin{Remark} 
As explained in the introduction, the data of a functor $L: X^{\op}\to
\Gpd_{\i}$ encodes the data of a homotopy local system of spaces on
$X$. 
\end{Remark}

\begin{Remark} \label{rem-4}
Since $X$ is an $\i$-groupoid, we have
a canonical contractible space of equivalences $X\heq X^{\op}$, which induces equivalences
\[
  \Fun(X,\Gpd_\i)\simeq \Fun (X^{\op},\Gpd_{\i}) \heq \Gpd_{\i/X}.
\]
Although notationally a bit more complicated, we think it is slightly more natural to regard spaces over $X$ as contravariant functors, instead of covariant functors, from $X$ to $\Gpd_\i$.
One reason for this is that, this way, the Yoneda embedding appears naturally as a functor $X\to\Fun(X^{\op},\Gpd_\i)$, and this will play an important role in our treatment of the Thom spectrum functor (cf. Proposition
\ref{t-pr-yoneda-taut}).
\end{Remark}

\begin{Lemma}\label{t-le-colimits-base-change}
The base-change functor
$
f^* \colon \Gpd_{\i/X}\to\Gpd_{\i/X'}
$
admits a right adjoint.
In particular, $f^*$ commutes with colimits.
\end{Lemma}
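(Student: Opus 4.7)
The plan is to reduce the claim to the existence of right Kan extensions into the $\i$-category $\Gpd_\i$.  First, recall from the discussion preceding the lemma that for any $\i$-groupoid $X$ there is a canonical equivalence $\Fun(X^{\op},\Gpd_\i)\heq\Gpd_{\i/X}$, and that this equivalence is natural in $X$ (it is an instance of straightening/unstraightening, or equivalently of the universal property of presheaves).  Under this identification, pulling back a homotopy local system along a map $f\colon X'\to X$ corresponds to precomposing it with $f^{\op}$; that is, base change is identified with the restriction functor
\[
   f^*\colon \Fun(X^{\op},\Gpd_\i)\too\Fun((X')^{\op},\Gpd_\i),\qquad L\mapsto L\circ f^{\op}.
\]

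Next, I would invoke the fact (HTT \S 4.3.3, in particular \HTT{4.3.3.7}) that for any functor $g\colon \aC\to\D$ of small $\i$-categories and any $\i$-category $\mathcal{E}$ admitting all small limits, the restriction functor $g^*\colon\Fun(\D,\mathcal{E})\to\Fun(\aC,\mathcal{E})$ admits a right adjoint, given by pointwise right Kan extension along $g$.  Applying this with $g=f^{\op}$ and $\mathcal{E}=\Gpd_\i$ — which is complete — produces the desired right adjoint $f_*\colon \Gpd_{\i/X'}\to\Gpd_{\i/X}$ to $f^*$.

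The second assertion is then immediate: any functor which admits a right adjoint preserves all colimits that exist in its source.

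The only real obstacle is the bookkeeping needed to check that the presheaf/slice equivalence is natural in the base, so that base change corresponds to precomposition along $f^{\op}$; once that identification is in hand, the lemma is a direct citation of the existence of right Kan extensions into a complete target.
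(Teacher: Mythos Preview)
Your argument is correct, but the paper takes a shorter and somewhat different route: its entire proof is the single citation ``see \HTT{6.1.3.14},'' which establishes directly that in an $\infty$-topos the base-change functor along any morphism admits a right adjoint.  By contrast, you first transport the question through the equivalence $\Gpd_{\i/X}\simeq\Fun(X^{\op},\Gpd_\i)$ to identify $f^*$ with precomposition by $f^{\op}$, and then invoke the existence of pointwise right Kan extensions into a complete target.  Both approaches ultimately rest on results from HTT; yours has the merit of naming the right adjoint explicitly and of fitting well with the presheaf language used elsewhere in the paper, at the cost of the bookkeeping you mention (naturality of the slice/presheaf equivalence in the base), while the paper's citation handles the slice categories directly and avoids that translation step.
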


\begin{proof}
For the proof, see \cite[6.1.3.14]{HTT}.
\end{proof}
\begin{Remark} 
If $X$ is an $\i$-groupoid, then via the equivalence of
$\i$-categories $\Gpd_{\i/X}\simeq\Fun(X^{\op},\Gpd_{\i})$, 
the Yoneda embedding $X\to \Gpd_{\i/X}$ sends the point $x$ of $X$ to
the ``path fibration'' $X_{/x}\to X$.  (This follows from an analysis
of the ``unstraightening'' functor that provides the right adjoint
in~\cite[2.2.1.2]{HTT}.)
\end{Remark}

\subsection{Parametrized spectra}
\label{sec:infinity-category-r}

An $\i$-category $\C$ is {\em stable} if its has a zero object, finite
limits, and the endofunctor $\Omega\colon \C\to\C$, defined by sending
$X$ to the limit of the diagram $\ast\to X\leftarrow\ast$, is an
equivalence~\cite[1.1.1.9, 1.4.2.27]{HA}.
It follows that the left adjoint $\Sigma$ of $\Omega$ is also an
equivalence, that finite products and finite coproducts agree, and
that square $\Delta^1\times\Delta^1\to\C$ is a pullback if and only if
it is a pushout (so that $\C$ has all finite colimits as well). A
morphism of stable $\i$-categories is an exact functor, meaning a
functor which preserves finite limits and colimits~\cite[1.1.4.1]{HA}.

More generally, given any $\i$-category $\C$ with finite limits,
the {\em stabilization} of $\C$ is the limit (in the
$\i$-category of $\i$-categories) of the tower
$$
\xymatrix{\cdots \ar[r]^{\Omega} & \C_*\ar[r]^{\Omega} & \C_*},
$$
where $\C_*$ denotes the pointed $\i$-category associated to $\C$ (the
full $\i$-subcategory of $\Fun(\Delta^1,\C)$ on those arrows whose
source is a final object $*$ of $\C$).
Provided $\C$ is presentable,
$\Stab(\C)$ comes equipped with a stabilization functor $\Sigma^\i_+
\colon \C\to\Stab(\C)$ functor from $\C$~\cite[1.4.4.4]{HA},
formally analogous to the suspension spectrum functor, and left
adjoint to the zero-space functor $\Omega^\i_-:\Stab(\C)\to\C$ (the
subscript indicates that we forget the basepoint).

If one works entirely in the world of presentable stable $\i$-categories and left adjoint functors thereof, then $\text{Stab}$ is left adjoint to the inclusion into the $\i$-category
of presentable $\i$-categories of the full $\i$-subcategory of
presentable stable $\i$-categories.
In other words, a morphism of presentable $\i$-categories $\C\to\D$ such
that $\D$ is stable factors (uniquely up to a contractible space of
choices) through the stabilization $\Sigma^\i_+\colon \C\to\text{Stab}(\C)$
of $\C$ (cf. \cite[1.4.4.4, 1.4.4.5]{HA}).
The $\i$-category $\Gpd_{\i/X}$ of spaces over a fixed space $X$ is presentable.

The discussion so far suggests two models for the $\i$-category of
spectra over $X$: one is $\Stab (\Gpd_{\i/X})$, and the other is $\Fun
(X^{\op}, \Stab (\Gpd_{\i}))$.  In fact these are equivalent: for any $\i$-groupoid $X$, the equivalence $\Gpd_{\i/X}\heq\Fun(X^{\op},\Gpd_\i)$
induces an equivalence
of stabilizations
$$
\Stab(\Gpd_{\i/X})\heq\Stab(\text{Fun}(X^{\op},\Gpd_\i)).
$$
Since limits in functor categories are computed pointwise, one easily
checks that
$$
\Stab(\Gpd_{\i/X})\heq\Stab(\Fun(X^{\op},\Gpd_\i))\heq\Fun(X^{\op},\Stab(\Gpd_\i)).
$$

\begin{Remark}\label{rem-may-sig}
May and Sigurdsson \cite{MR2271789} build a simplicial model category
$\mathcal{S}_{X}$ of orthogonal spectra parametrized by a topological
$X$.  In~\cite{ABG:tcu}, we prove that there is an equivalence of
$\i$-categories between the simplicial nerve of the May-Sigurdsson
category of parametrized orthogonal spectra
$\mathrm{N}(\mathcal{S}_X^{\cf})$ and the $\i$-category $\Fun(\Sing
X^{\op}, \Stab (\Gpd_{\i}))$.
\end{Remark}

\subsection{Parametrized $R$-modules and $R$-lines} \label{sec:glr-symm-mono}
We now fix an $A_\infty$-ring spectrum $R$.
Recall \cite[4.2.1.36]{HA} that there exists a presentable stable $\i$-category $\Mod{R}$ of (right) $R$-module spectra, and that this $\i$-category possesses a distinguished object $R$.

\begin{Definition} \label{inf-def-1}
An $R$-line is an $R$-module $M$ which admits an $R$-module
equivalence $M\heq R$.
\end{Definition}

Let $\Rwe$ denote the full $\i$-subgroupoid of $\Rmod$ spanned by the
$R$-lines.  This is not the same as the full $\i$-subcategory of
$\Rmod$ on the $R$-lines, as a map of $R$-lines is by definition an
equivalence.  We regard $\Rwe$ as a pointed $\i$-groupoid via the
distinguished object $R$.

\begin{Proposition}\label{prop:Rline=BAutR}
There is a canonical equivalence of $\infty$-groupoids
\[
B\Aut_R(R)\simeq\Rwe,
\]
and $\Aut_{R} (R)\simeq \GL{R}$ as monoidal $\infty$-groupoids.
\end{Proposition}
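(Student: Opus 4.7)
The plan is to exhibit $\Rwe$ as the classifying space of its loop space at $R$, and then identify that loop space monoidally with $\GL{R}$.

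\textbf{Step 1: $\Rwe$ is a pointed connected $\infty$-groupoid.} By definition (Definition~\ref{inf-def-1}), every object of $\Rwe$ admits an equivalence to $R$, so $\pi_{0}\Rwe = *$, and $\Rwe$ is pointed by the distinguished object $R\in\Mod{R}$. Being a full $\infty$-subgroupoid of the $\infty$-groupoid core of $\Mod{R}$, it is itself an $\infty$-groupoid.

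\textbf{Step 2: Compute the loop space at $R$.} Since $\Rwe$ is a full subgroupoid of the maximal subgroupoid $\Mod{R}^{\simeq}$ in the sense of \eqref{eqn:ho}, the mapping space from $R$ to $R$ in $\Rwe$ is precisely the union of components of $\Map_{\Mod{R}}(R,R)$ consisting of equivalences, i.e., $\Aut_{R}(R)$. As $\Rwe$ is a Kan complex, this mapping space is a model for $\Omega_{R}\Rwe$.

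\textbf{Step 3: Deduce $\Rwe\simeq B\Aut_{R}(R)$.} This is the standard fact that any pointed connected $\infty$-groupoid $X$ is canonically equivalent to $B\Omega_{x}X$, applied to $X = \Rwe$ with basepoint $R$. Concretely, one can realize this equivalence by considering the full $\infty$-subgroupoid of $\Rwe$ on the single object $R$: it has the homotopy type of the classifying space of its endomorphism monoid $\Aut_{R}(R)$, and the inclusion into $\Rwe$ is essentially surjective (by Step~1) and fully faithful (by construction), hence an equivalence of $\infty$-groupoids. This produces the first equivalence of the proposition.

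\textbf{Step 4: Monoidal identification $\Aut_{R}(R)\simeq\GL{R}$.} The composition pairing makes $\Aut_{R}(R)$ into a group-like monoidal $\infty$-groupoid, i.e., a group-like $A_{\infty}$ space; this is just the fact that $\Omega_{R}\Rwe$ carries such a structure. On the other hand, evaluation at $1\in R$ gives a natural equivalence of $A_{\infty}$ spaces $\Map_{\Mod{R}}(R,R)\simeq\Omega^{\infty}R$, where the $A_{\infty}$ structure on the left is composition and on the right is multiplication in $R$. Restricting to components lying over units in $\pi_{0}R$ yields
\[
\Aut_{R}(R)\;\simeq\;\Omega^{\infty}R\times_{\pi_{0}R}(\pi_{0}R)^{\times}\;=\;\GL{R},
\]
which is the definition of $\GL{R}$ recalled in the introduction (and developed in \cite{units-sma}, \S 2), and this equivalence is compatible with the $A_{\infty}$ structures.

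\textbf{Main obstacle.} Steps~1--3 are formal properties of $\infty$-groupoids, and the underlying space-level equivalence in Step~4 is essentially Yoneda. The real content is verifying that the two $A_{\infty}$ structures in Step~4 agree: the composition structure intrinsic to $\Aut_{R}(R)$ as a loop space of $\Rwe$, versus the multiplicative structure on $\GL{R}$ as inherited from the $A_{\infty}$ ring $R$. This should follow from the compatibility of the equivalence $\Map_{\Mod{R}}(R,R)\simeq\Omega^{\infty}R$ with composition and multiplication, but one must produce it as an equivalence of monoidal $\infty$-groupoids rather than merely of underlying spaces; in the quasicategorical framework this is a standard consequence of the monoidal Yoneda lemma applied to the tensor unit $R$ in $\Mod{R}$.
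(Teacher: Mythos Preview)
Your proposal is correct and follows essentially the same approach as the paper: the paper also identifies $B\Aut_R(R)$ with the full subgroupoid of $\Mod{R}$ on the single object $R$, observes that its inclusion into $\Rwe$ is fully faithful and essentially surjective, and then obtains $\Aut_R(R)\simeq\GL{R}$ by restricting the equivalence $\End_R(R)\simeq\Omega^\infty R$ to the monoidal subspace of invertible components via the pullback square \eqref{eqn:ho}. Your Step~3 and Step~4 reproduce exactly these two moves; the additional commentary on the monoidal compatibility is a useful elaboration but not a different argument.
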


\begin{proof}
We regard $B\Aut_R(R)\subset\Mod{R}$ as the full subgroupoid of $\Mod{R}$ consisting of the single $R$-module $R$.
Hence $B\Aut_R(R)$ is naturally a full subgroupoid of $\Rwe$, and the fully faithful inclusion $B\Aut_R(R)\subset\Rwe$ is also essentially surjective by definition of $\Rwe$.
It is therefore an equivalence, so it only remains to show that $\GL{R}\simeq\Aut_R(R)$ as monoidal $\i$-groupoids.
This follows from the fact that $\End_R(R)\simeq\Omega^\infty(R)$, and $\Aut_R(R)\subset\End_R(R)$ is, by \ref{eqn:ho}, the monoidal subspace defined by the same condition as $\GL{R}\subset\Omega^\infty(R)$; namely, as the pullback
\[
\xymatrix{
\Aut_R(R)\ar[r]\ar[d] & \End_R(R)\ar[d]\\
\pi_0\End_R(R)^\times\ar[r] & \pi_0\End_R(R)\,,}
\]
where $\pi_0\End_R(R)^\times\cong\pi_0(R)^\times$ denotes the invertible homotopy classes of endomorphisms in the ordinary category $\Ho(\Mod{R})$.
\end{proof}

\begin{Definition}
Let $X$ be a space.
The $\i$-category of $R$-modules over $X$ is the $\i$-category $\Fun(X^{\op},\Mod{R})$ of presheaves of $R$-modules on $X$; similarly, the $\i$-category of $R$-lines over $X$ is the $\i$-category $\Fun(X^{\op},\Line{R})$ of presheaves of $R$-lines on $X$.
\end{Definition}

We will denote by $R_X$ the constant functor $X^{\op} \to 
\Line{R} \to \Mod{R}$ which has value $R$, and sometimes write
\[
\Mod{R_X} =\Fun(X^{\op},\Mod{R})
\]
for the $\i$-category of $R$-modules over $X$, and
\[
\Line{R_X} =\Fun(X^{\op},\Line{R})
\]
for the full $\i$-subgroupoid spanned by those $R$-modules over $X$ which factor
\[
X^{\op}\too\Line{R}\too\Mod{R}
\]
through the inclusion of the full $\i$-subgroupoid $\Line{R}\to\Mod{R}$.

\begin{Lemma}
The fiber over $X$ of the projection
$\Gpd_{\i/\Line{R}}\to\Gpd_\i$ is equivalent to the $\i$-groupoid
$\Line{R_X}$.
\end{Lemma}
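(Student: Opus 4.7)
The plan is to identify the fiber $F$ with the functor $\i$-groupoid $\Fun(X,\Line{R})$, and then to rewrite this using the canonical equivalence $X \simeq X^{\op}$ of Remark~\ref{rem-4} as $\Fun(X^{\op},\Line{R}) = \Line{R_X}$. The main obstacle is the identification $F \simeq \Fun(X,\Line{R})$: one must unwind the pullback defining $F$ and then check that a natural comparison functor is fully faithful.

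First I would unpack the fiber. An object of $\Gpd_{\i/\Line{R}}$ is a pair $(Y, f\colon Y \to \Line{R})$, and the projection sends it to $Y$. Hence an object of $F = \Gpd_{\i/\Line{R}} \times_{\Gpd_\i} \{X\}$ is a triple $(Y, f, \phi\colon Y \xra{\simeq} X)$, and a morphism $(Y, f, \phi) \to (Y', f', \phi')$ consists of a map $g\colon Y \to Y'$, a homotopy $\alpha\colon f \simeq f' \circ g$ over $\Line{R}$, and a compatibility $\beta\colon \phi' \circ g \simeq \phi$. Because $\phi$ and $\phi'$ are equivalences, $g$ is forced to be an equivalence as well, so $F$ is an $\i$-groupoid.

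Next I would exhibit the comparison functor $\Fun(X,\Line{R}) \to F$ sending $f$ to the triple $(X, f, \id_X)$. Essential surjectivity is immediate: $(Y, f, \phi)$ is equivalent, via $\phi^{-1}\colon X \to Y$, to $(X, f \circ \phi^{-1}, \id_X)$. Full faithfulness reduces to noting that a morphism in $F$ from $(X, f, \id)$ to $(X, f', \id)$ is a triple $(g, \alpha, \beta)$ with $g\colon X \to X$, $\alpha\colon f \simeq f' \circ g$, and $\beta\colon g \simeq \id_X$; the space of such pairs $(g, \beta)$ is contractible (it is a based path space in $\Fun(X,X)$ at $\id_X$), so the total space of triples is equivalent to the fiber over $(\id_X, \mathrm{const})$, namely the space of homotopies $f \simeq f'$ in $\Fun(X,\Line{R})$. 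Combining with Remark~\ref{rem-4}, we obtain
\[
F \simeq \Fun(X,\Line{R}) \simeq \Fun(X^{\op},\Line{R}) = \Line{R_X}.
\]
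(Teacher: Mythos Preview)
Your proof is correct and follows essentially the same line as the paper's: both identify the fiber with the mapping space $\map_{\Gpd_\i}(X,\Line{R})\simeq\Fun(X^{\op},\Line{R})=\Line{R_X}$. The only difference is that the paper simply invokes the general fact that the fiber of the slice projection $\C_{/b}\to\C$ over an object $a$ computes $\map_\C(a,b)$, whereas you unwind and verify this by hand in the case $\C=\Gpd_\i$, $b=\Line{R}$, $a=X$.
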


\begin{proof}
$\Line{R_X}\heq\Fun(X^{\op},\Line{R})\heq\map_{\Gpd_\i}(X^{\op},\Line{R})$, and, in
general, the $\i$-groupoid $\map_{\C}(a,b)$ of maps from $a$ to $b$ in the
$\i$-category $\C$ may be calculated as the fiber over $a$ of the
projection $\C_{/b}\to\C$.
\end{proof}

\begin{Definition}
A {\em trivialization} of an $R_X$-module $L$ is an $R_X$-module
equivalence $L\to R_X$.  The $\i$-category $\Triv{R_X}$ of trivialized
$R$-lines is the slice category
\[
\Triv{R_X}\eqdef\Line{R_X}_{/R_X}.
\]
\end{Definition}

The objects of $\Triv{R_X}$ are {\em trivialized} $R_X$-lines, which
is to say $R_X$-lines $L$ with a trivialization $L\to R_X$; more
generally, an $n$-simplex $\Delta^n\to\Triv{R_X}$ of $\Triv{R_X}$ is a
map $\Delta^n\star\Delta^0\to\Line{R_X}$ of $\Line{R_X}$ which sends
$\Delta^0$ to $R_X$.  There is a canonical projection
$$
\iota_X:\Triv{R_X}\longrightarrow\Line{R_X}
$$
which sends the $n$-simplex $\Delta^n\star\Delta^0\to\Line{R_X}$ to
the $n$-simplex $\Delta^n\to\Delta^n\star\Delta^0\to\Line{R_X}$;
according to (the dual of) \HTT{Corollary 2.1.2.4}, this is a right
fibration, and hence a Kan
fibration as $\Line{R_X}$ is an $\i$-groupoid \HTT{Lemma 2.1.3.2}.

\begin{Lemma}\label{inf-t-pr-egl-bgl-fib}
Let $X$ be an $\i$-groupoid.  Then $\Triv{R_X}$ is a contractible
$\i$-groupoid, and the fiber, over a given $R_X$-line $f$, of the projection
$$
\iota_X \colon \Triv{R_X}\longrightarrow\Line{R_X}
$$
is the (possibly empty) $\i$-groupoid $\map_{\Line{R_{X}}} (f,R_{X})$.
\end{Lemma}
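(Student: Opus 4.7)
The strategy is to apply standard facts about slice categories and mapping spaces in $\infty$-categories. Both claims are purely formal consequences of the definition $\Triv{R_X}\eqdef\Line{R_X}_{/R_X}$ together with the fact that $\Line{R_X}$ is an $\i$-groupoid.

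For the first claim, that $\Triv{R_X}$ is contractible, I would proceed in two steps. First, any slice $\i$-category $\C_{/c}$ has a terminal object given by $\id_c$; in particular, the identity map $R_X\to R_X$ is a terminal object of $\Triv{R_X}$ (see \HTT{\S 1.2.12}). Second, since $\iota_X\colon \Triv{R_X}\to\Line{R_X}$ is a right fibration (as noted just before the statement of the lemma, via \HTT{Corollary 2.1.2.4}) and $\Line{R_X}$ is an $\i$-groupoid, the total space $\Triv{R_X}$ is itself an $\i$-groupoid by \HTT{Lemma 2.1.3.2}. An $\i$-groupoid with a terminal object is contractible: concretely, the overcategory of a Kan complex over a vertex is a model for the based path space at that vertex, which is always contractible.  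Alternatively, in an $\i$-groupoid the terminal object is equivalent to every other object (since every mapping space into it is contractible, hence nonempty, and every morphism in a groupoid is an equivalence), and its loop space is contractible, forcing all higher homotopy groups to vanish.

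For the second claim, I would invoke the general fact that mapping spaces in an $\i$-category arise as fibers of the forgetful map out of a slice. More precisely, for any object $c$ of an $\i$-category $\C$, the fiber of the projection $\C_{/c}\to\C$ over an object $a$ is a model for the mapping space $\map_{\C}(a,c)$; this is one of the several equivalent simplicial models for mapping spaces in a quasicategory discussed in \HTT{\S 1.2.2}. Specializing to $\C=\Line{R_X}$, $c=R_X$, and $a=f$ immediately identifies the fiber of $\iota_X$ over $f$ as $\map_{\Line{R_X}}(f,R_X)$. This space is empty precisely when $f$ fails to be equivalent to $R_X$ in $\Line{R_X}$, i.e., when the $R_X$-line $f$ admits no trivialization.

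There is no real obstacle here; both assertions are direct applications of the slice-category machinery developed in \HTT{Chapter 1--2}, and the main task is only to arrange the standard facts in the right order. The one small subtlety worth flagging is that the contractibility argument for $\Triv{R_X}$ relies on its being an $\i$-groupoid, and this in turn relies on the right-fibration property of $\iota_X$ already observed just before the lemma.
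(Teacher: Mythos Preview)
Your proposal is correct and follows essentially the same approach as the paper: both arguments identify the fiber of $\iota_X$ via the standard description of $\map_{\C}(a,b)$ as the fiber of $\C_{/b}\to\C$, and both deduce contractibility of $\Triv{R_X}$ from the fact that it is an $\i$-groupoid with a final object. Your write-up simply unpacks these steps in slightly more detail than the paper's terse one-line proof.
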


\begin{proof}
Once again, use the description of $\map_{\C}(a,b)$ as the fiber over $a$ of
the projection $\C_{/b}\to\C$, together with the fact that if $\C$ is
an $\i$-groupoid then $\C_{/b}$, an $\i$-groupoid with a final object,
is contractible.
\end{proof}

\begin{Corollary}~\label{cor:gl-kan-fib}
The Kan fibration
\[
\Aut_R(R)\to\Triv{R}\to\Line{R}
\]
is a simplicial model for the quasifibration $\GL{R}\to E\GL{R}\to
B\GL{R}$.
\end{Corollary}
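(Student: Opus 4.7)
The plan is to specialize Lemma \ref{inf-t-pr-egl-bgl-fib} to the case $X=\ast$, so that $R_X=R$, $\Line{R_X}=\Line{R}$, and $\Triv{R_X}=\Triv{R}$. The lemma then immediately delivers a Kan fibration $\iota\colon\Triv{R}\to\Line{R}$ whose total space is contractible, and whose fiber over the distinguished vertex $R\in\Line{R}$ is the mapping $\infty$-groupoid $\map_{\Line{R}}(R,R)$. Because $\Line{R}$ is an $\infty$-groupoid, every morphism in it is an equivalence, so this mapping space is precisely $\Aut_R(R)$.

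Next, I would invoke Proposition \ref{prop:Rline=BAutR} to identify $\Line{R}\simeq B\Aut_R(R)$ as pointed $\infty$-groupoids, and $\Aut_R(R)\simeq\GL{R}$ as $A_\infty$-groups. Applying geometric realization and using Quillen's theorem \cite{MR0238322} that realization carries Kan fibrations to Serre fibrations, we obtain a Serre fibration of topological spaces whose base is a model for $B\GL{R}$, whose total space is contractible, and whose fiber is weakly equivalent to $\GL{R}$.

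The last step is to match this fibration sequence with the quasifibration $\GL{R}\to E\GL{R}\to B\GL{R}$ of \cite{units-sma,MQRT:ersers}. This is accomplished by observing that $\Triv{R}$ carries a natural right action of $\Aut_R(R)$ by precomposition with self-equivalences of $R$, which, under the equivalence $\Aut_R(R)\simeq\GL{R}$, corresponds to the standard principal action; together with contractibility of the total space this identifies the geometric realization, up to weak equivalence, with the universal principal $\GL{R}$-quasifibration. The main obstacle is precisely this last step: ensuring that the identifications at the level of fiber, total space, and base assemble coherently into an equivalence of principal bundle sequences, rather than merely producing three separately equivalent spaces. This coherence can be extracted from the observation that, in the $\infty$-categorical setting, the sequence $\Aut_R(R)\to\Triv{R}\to\Line{R}$ is constructed as the universal pointed right fibration over $B\Aut_R(R)=\Line{R}$, so it must agree on realization with any other principal $\GL{R}$-fibration having contractible total space and classifying base $B\GL{R}$.
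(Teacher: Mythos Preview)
Your proposal is correct and follows essentially the same approach as the paper: specialize Lemma~\ref{inf-t-pr-egl-bgl-fib} to $X=\ast$ to obtain a Kan fibration with contractible total space and fiber $\map_{\Line{R}}(R,R)=\Aut_R(R)$, then invoke Proposition~\ref{prop:Rline=BAutR} to identify $\Aut_R(R)\simeq\GL{R}$ and $\Line{R}\simeq B\GL{R}$. The paper's proof stops there; your third paragraph, worrying about coherence of the identification as a principal fibration sequence, goes beyond what the paper actually argues (it treats ``simplicial model'' as meaning the three spaces have the correct homotopy types and the map is a Kan fibration, and relegates the passage to a Serre fibration under realization to a separate remark).
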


\begin{proof}
By the preceding discussion, $\Triv{R}$ is a contractible Kan complex
and the projection $\Triv{R}\to\Line{R}$ is a Kan fibration.  By
Proposition \ref{prop:Rline=BAutR}, we have $\Aut_R(R)\simeq\GL{R}$.
\end{proof}

For $X$ the terminal Kan complex, we write $\Triv{R}$ in place of
$\Triv{R_X}$ and $\iota:\Triv{R}\to\Line{R}$ in place of
$\iota_X$.  Given $f:X\to\Line{R}$, we refer to a
factorization
\begin{equation} \label{eq:6}
\xymatrix{ & {\Rtriv}
 \ar[d]_\iota \\
{X^{\op}} \ar@{-->}[ur] \ar[r]_-{f} & {\Rwe.}  }
\end{equation}
of $f$ through $\iota$ as a {\em trivialization} of $f$.

\begin{Definition}
We write $\mathrm{Triv} (f)$ for the space of trivializations of $f$;
explicitly, it is the fiber over $f$ in the fibration 
\[
 \Fun(X^{\op},\Triv{R})\xra{\iota}\Fun(X^{\op},\Line{R}).
\]
\end{Definition}

\begin{Corollary}\label{triv(f)=triv(f^*L)}
There is a canonical equivalence of $\i$-groupoids
\[
\Fun(X^{\op},\Triv{R})\simeq\Triv{R_X}.
\]
Moreover, $\mathrm{Triv} (f)$ is equivalent to
$\map_{\Line{R_{X}}} (f,R_{X})$. 
\end{Corollary}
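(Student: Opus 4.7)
My plan is to establish the two statements in sequence, with the first being the substantive content and the second a straightforward application of \ref{inf-t-pr-egl-bgl-fib}.

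For the first equivalence, the key observation is the general $\infty$-categorical identity $\Fun(K,\C_{/c})\simeq\Fun(K,\C)_{/c_K}$, where $c_K$ is the constant diagram at $c$. This is a consequence of the adjunction between the join $K\star\Delta^0$ and the slice construction (cf.\ \HTT{\S 1.2.9, 4.2.2}): an $n$-simplex of $\Fun(K,\C_{/c})$ is a map $\Delta^n\times K\to\C_{/c}$, which by definition of $\C_{/c}$ unpacks to a map $(\Delta^n\times K)\star\Delta^0\to\C$ sending $\Delta^0$ to $c$; reorganizing, this is the same as a natural transformation from a diagram $\Delta^n\times K\to\C$ to the constant diagram $c_K$, i.e.\ an $n$-simplex of $\Fun(K,\C)_{/c_K}$. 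Applying this with $K=X^{\op}$, $\C=\Line{R}$, and $c=R$ gives
\[
\Fun(X^{\op},\Triv{R})=\Fun(X^{\op},\Line{R}_{/R})\simeq\Fun(X^{\op},\Line{R})_{/R_X}=\Line{R_X}_{/R_X}=\Triv{R_X}.
\]
I would also check that this equivalence is compatible with the forgetful projections to $\Fun(X^{\op},\Line{R})=\Line{R_X}$; this follows from naturality of the join-slice adjunction in the slicing object, and amounts to observing that both projections are induced by the inclusion $K\hookrightarrow K\star\Delta^0$.

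For the second statement, $\mathrm{Triv}(f)$ is by definition the fiber of $\iota\colon\Fun(X^{\op},\Triv{R})\to\Fun(X^{\op},\Line{R})$ over $f$. By the compatibility just noted, transporting along the equivalence of the first part identifies this fibration with $\iota_X\colon\Triv{R_X}\to\Line{R_X}$, so that $\mathrm{Triv}(f)$ is equivalent to the fiber of $\iota_X$ over $f$. By Lemma \ref{inf-t-pr-egl-bgl-fib} (applied to the $\infty$-groupoid $X$), this fiber is precisely $\map_{\Line{R_X}}(f,R_X)$, yielding the claimed equivalence.

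The only real subtlety is the compatibility of the equivalence $\Fun(X^{\op},\Triv{R})\simeq\Triv{R_X}$ with the forgetful projections; once this is in hand, everything else is a direct application of the cited results. I would expect to handle this either by writing out both projections at the level of simplicial sets via the join formalism, or by appealing to a suitably packaged form of the adjunction already present in \HTT{\S 4.2.2}.
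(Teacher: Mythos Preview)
Your proof is correct and follows essentially the same approach as the paper: the paper asserts the chain $\Fun(X^{\op},\Line{R}_{/R})\simeq\Fun(X^{\op},\Line{R})_{/p^*R}\simeq\Line{R_X}_{/R_X}$ for the first claim, and then compares the two pullback squares defining $\map_{\Line{R_X}}(f,R_X)$ and $\mathrm{Triv}(f)$ over $\Line{R_X}$, noting that the right-hand fibrations agree. Your version supplies more justification (the join--slice adjunction and the explicit invocation of Lemma~\ref{inf-t-pr-egl-bgl-fib}), and you correctly flag the compatibility with projections that the paper leaves implicit.
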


\begin{proof}
For the first claim, we have 
$$
\Fun(X^{\op},\Line{R}_{/R})\heq\Fun(X^{\op},\Line{R})_{/p^*R}\heq\Line{R_X}_{/R_X}.
$$
For the second, compare the two pull-back diagrams 
\[
\xymatrix{
{\map_{\Line{R_{X}}} (f,R_{X})}
 \ar[r] 
 \ar[d]
&
{\Line{R_{X}}_{/R_{X}}}
 \ar[d]
\\
{\{f \}} \ar[r]
&
{\Line{R_{X}}}
}
\]
and
\[
\xymatrix{
{\mathrm{Triv} (f)}
 \ar[r] 
 \ar[d]
&
{\Fun (X^{\op},\Triv{R})}
 \ar[d]
\\
{\{f \}} 
\ar[r]
&
{\Fun (X^{\op},\Line{R})},
}
\]
in which the two right-hand fibrations are equivalent.
\end{proof}

%

A map of spaces $f:X\to Y$ gives rise to a restriction functor
$$
f^*:\Mod{R_Y}\to\Mod{R_X}
$$
which admits a right adjoint $f_*$ as well as a left adjoint $f_!$.
This means that, given an $R_X$-module $L$ and an $R_Y$-module $M$,
there are natural equivalences of $\i$-groupoids
$$
\map(f_!L,M)\heq\map(L,f^*M)
$$
and
$$
\map(f^*M,L)\heq\map(M,f_*L).
$$

An important point about these functors is the following.   

\begin{Proposition}\label{t-triv-is-smash}
Let $\pi: X \to \ptspace$ be the projection to a point and let
$\pi^{*}: \Rmod \to 
\Mod{R_{X}}$ be the resulting functor. If $M$ is an $R$-module, then 
\begin{equation}\label{eq:7}
    \pi_{!}\pi^{*}M \heq \splus X \Smash M.
\end{equation}
\end{Proposition}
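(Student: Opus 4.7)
The plan is to identify $\pi_{!}$ as the colimit functor and then compute the colimit of a constant diagram via the tensoring of $\Rmod$ over spaces (or over spectra).

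First, I would observe that for $\pi\colon X \to \ptspace$, the left adjoint $\pi_{!}$ to restriction $\pi^{*}\colon \Rmod = \Fun(\ptspace^{\op},\Rmod) \to \Fun(X^{\op},\Rmod)$ is left Kan extension along $X^{\op}\to \ptspace$, which by \HTT{4.3.3.7} is computed as the colimit over $X^{\op}$. Since $\pi^{*}M$ is the constant functor $X^{\op}\to \Rmod$ with value $M$, we therefore have
\[
\pi_{!}\pi^{*}M \;\heq\; \colim_{X^{\op}} M_{\text{const}}.
\]

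Next, since $X$ is an $\i$-groupoid there is a canonical equivalence $X\heq X^{\op}$, so we may instead compute $\colim_{X} M_{\text{const}}$. The presentable $\i$-category $\Rmod$ is stable and hence tensored over the $\i$-category of spectra, and in particular over spaces via $\Sigma^{\infty}_{+}\colon \spaces\to\spectra$; the tensoring agrees with the construction by colimits, in the sense that the colimit of a constant diagram indexed by a space $X$ with value $M$ is the tensor $X\otimes M$ (see \HA{4.4.4.9} or the general formalism of \HTT{\S 5.5.2}). Explicitly, the universal property of the tensor identifies
\[
\map_{\Rmod}(X\otimes M, N) \;\heq\; \map_{\spaces}(X,\map_{\Rmod}(M,N)) \;\heq\; \map_{\Rmod}(\colim_{X} M_{\text{const}},N).
\]

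Finally, I would invoke the identification of the tensor $X\otimes M$ with $\splus X \Smash M$. This is standard: the functor $\splus\colon \spaces \to \spectra$ is left adjoint to $\linf$, and for any $R$-module $M$ the endofunctor $\slot \Smash M$ of spectra preserves colimits, so the composite $X\mapsto \splus X \Smash M$ preserves colimits in $X$ and sends the point to $M$; by the universal property of the space tensoring on $\Rmod$, this composite is canonically equivalent to $X\otimes M$. Combining the steps yields the desired equivalence $\pi_{!}\pi^{*}M \heq \splus X \Smash M$.

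The only nontrivial point is the identification of left Kan extension with the tensor construction, which really amounts to the observation that a presentable stable $\i$-category is canonically tensored over spectra in a colimit-preserving way; once this is in place the rest is a formal chain of adjunctions.
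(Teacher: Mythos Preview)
Your proposal is correct and follows essentially the same approach as the paper: both identify $\pi_{!}\pi^{*}M$ with the colimit of the constant functor $X^{\op}\to\Rmod$ with value $M$, and then compute this colimit as $\splus X\wedge M$. The only difference is packaging: you invoke the tensoring of a presentable stable $\i$-category over spaces/spectra, whereas the paper factors the constant functor as $X^{\op}\xra{1}\Gpd_{\i}\xra{\splus}\Stab(\Gpd_{\i})\xra{(\slot)\wedge M}\Rmod$, observes that the last two arrows preserve colimits, and applies the formula $\colim_{X^{\op}}1\heq X$ already established in \eqref{colim1=X}. Your route through the tensor adjunction and the paper's route through the factorization and \eqref{colim1=X} are two phrasings of the same universal-property argument.
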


\begin{proof}
We use the equivalence $\Mod{R_{X}}\heq \Fun (X^{\op},\Rmod)$, and compute
in $\Fun (X^{\op},\Rmod).$  In that case the the left hand
side in \eqref{eq:7} is the colimit of the constant map of $\i$-categories
\[
    X^{\op} \xra{M} \Rmod.
\]
This map is equivalent to the composition 
\[
    X^{\op} \overset{1}{\too} \Gpd_{\i} \xra{\splus} \Stab (\Gpd_{\i})
    \xra{(\slot)\Smash M} \Rmod.
\]
The second two functors in this composition commute with colimits, and
equation \ref{colim1=X} says that $X\heq \colim (1: X^{\op}\xra{}\Gpd_{\i}).$
\end{proof}

\subsection{Thom spectra}
\label{sec:thom-r-modules}
We continue to fix an $A_\infty$-ring spectrum $R$.



\begin{Definition}
The Thom $R$-module spectrum is the functor
$$
M:\Gpd_{\i/\Line{R}}\longrightarrow\Rmod
$$
which sends $f:X^{\op}\to\Line{R}$ to the colimit of the composite 
\[ X^{\op}\overset{f}\to\Line{R}\overset{i}{\to}\Mod{R}. \]
Equivalently $Mf$ is the left
Kan extension
$$
Mf\eqdef p_!(i\circ f)
$$
along the map $p: X^{\op}\to \ptspace$. 
\end{Definition}




\begin{Proposition}\label{quotient}
Let $G$ be an $\i$-group (a group-like monoidal $\i$-groupoid) with classifying space $BG$ and suppose given a map $f:BG\to\Line{R}$.  Then
$$
Mf\heq R/G,
$$
where $G$ acts on $R$ via the map $\Omega f:G\heq\Omega
BG\to\Omega(\Line{R})\heq\Aut_R(R)$.
\end{Proposition}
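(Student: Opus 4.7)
The plan is to unpack definitions and identify the colimit with the homotopy quotient. By construction, $Mf = p_!(i\circ f)$ is the colimit of the composite
\[
BG \xra{f} \Line{R} \xra{i} \Mod{R}.
\]
The key tool is Proposition~\ref{prop:Rline=BAutR}, which gives a canonical equivalence $\Line{R} \simeq B\Aut_R(R)$ of pointed $\i$-groupoids. Under this identification, a map $f:BG\to\Line{R}$ from a connected $\i$-groupoid is equivalent data to its loops $\Omega f: G \simeq \Omega BG \to \Omega \Line{R} \simeq \Aut_R(R)$, which is a map of $\i$-groups. Composing with $i$, the diagram $BG \to \Mod{R}$ selects the object $R\in\Mod{R}$ together with the $R$-linear $G$-action determined by $\Omega f$.

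Next I would recall that, in any $\i$-category $\C$, the colimit of a functor $BG\to\C$ sending the unique object to $X\in\C$ is precisely the homotopy quotient $X/G$. This is more or less by definition; equivalently, because $\Gpd_{\i/BG}\simeq \Fun(BG^{\op},\Gpd_\i)$ identifies $G$-equivariant spaces with functors out of $BG$, and colimits compute orbits. Applying this in the stable presentable $\i$-category $\Mod{R}$ to the functor above yields
\[
Mf = \colim\bigl(BG \xra{f} \Line{R} \xra{i} \Mod{R}\bigr) \simeq R/G,
\]
with $G$ acting on $R$ through $\Omega f$, as required.

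There is no serious obstacle here; the statement is essentially a reformulation of the construction. The only thing that requires a little care is checking that the two a priori different notions of ``$G$-action on $R$''—namely, (i) the data of a functor $BG\to\Mod{R}$ carrying the basepoint to $R$, and (ii) a monoidal map $G\to\Aut_R(R)$—agree, and that the colimit notation matches the intended meaning of $R/G$. Both follow formally from the equivalence $\Line{R}\simeq B\Aut_R(R)$ together with the universal property $\map_{\Cat_\i}(BG,\C^{\simeq})\simeq\map_{\Gpd_\i}(BG,\C^{\simeq})$ for $\i$-groupoids $\C^{\simeq}$, so the argument is mostly a matter of citing Proposition~\ref{prop:Rline=BAutR} and invoking the definition of the colimit over $BG$.
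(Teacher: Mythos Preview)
Your proposal is correct and takes essentially the same approach as the paper's proof, which simply asserts in one line that both $Mf$ and $R/G$ are by definition the colimit of the composite $BG^{\op}\to B\Aut_R(R)\heq\Line{R}\to\Mod{R}$. You have unpacked this a bit more carefully, spelling out why a pointed map $BG\to\Line{R}$ is the same data as a monoidal map $G\to\Aut_R(R)$ and why the colimit over $BG$ deserves the name $R/G$, but the underlying argument is identical.
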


\begin{proof}
Both $Mf$ and $R/G$ are equivalent to the colimit of the composite functor $BG^{\op}\to B\Aut_R(R)\heq\Line{R}\to\Mod{R}$.
\end{proof}

\subsection{Orientations}
With these in place, one can analyze the space of orientations in a
straightforward manner, as follows.  First of all observe that, by
definition, we have an equivalence 
\[
   \map_{\Mod{R}}(Mf,R) \heq \map_{\Mod{R_{X}}}(f,p^{*}R).
\]

\begin{Definition} \label{inf-def-2}
The space of \emph{orientations} of $Mf$ is the pullback
\begin{equation}\label{inf-eq:2}
\xymatrix{
\Rorient (Mf,R) \ar[r] \ar[d]^-{\heq} & \map_{\Rmod}(Mf,R) \ar[d]^-{\heq} \\
\map_{\Line{R_{X}}}(f,p^*R) \ar[r] & \map_{\Mod{R_{X}}}(f,p^*R).\\
}
\end{equation}
\end{Definition}

The $\i$-groupoid $\Rorient (Mf,R)$ enjoys an obstruction theory
analogous to that of the space of orientations described in
\cite{units-sma}.  The following theorem is the analogue in this
context of~\cite[3.20]{units-sma}.

\begin{Theorem} \label{t-th-or-thy-infty-lifting}
Let $f:X^{\op}\to\Rwe$ be a map, with associated Thom $R$-module $Mf$.
Then the space of orientations $Mf\to R$ is equivalent to
the space of lifts in the diagram
\begin{equation} \label{eq:26}
\xymatrix{ & {\Rtriv}
 \ar[d]_\iota \\
X^{\op} \ar@{-->}[ur] \ar[r]_-{f} & {\Rwe.}  }
\end{equation}
\end{Theorem}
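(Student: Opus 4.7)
My plan is to prove the theorem by chaining together the defining pullback of the space of orientations with Corollary~\ref{triv(f)=triv(f^*L)}. The strategy is that both sides of the claimed equivalence are, essentially tautologically, equivalent to $\map_{\Line{R_X}}(f,R_X)$, so all that remains is to identify this common middle term.

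First, I would analyze the defining pullback \eqref{inf-eq:2} for $\Rorient(Mf,R)$. The right vertical map, $\map_{\Mod{R}}(Mf,R)\to\map_{\Mod{R_X}}(f,p^*R)$, is the equivalence produced by the adjunction $p_!\dashv p^*$ applied to the defining formula $Mf\eqdef p_!(i\circ f)$, together with $p^*R=R_X$. Since pulling back a cospan along an equivalence reproduces the other vertex, this forces the left vertical map to be an equivalence as well, so
\[
\Rorient(Mf,R)\heq\map_{\Line{R_X}}(f,R_X).
\]
This step is essentially built into Definition~\ref{inf-def-2}, but it is worth isolating because it records the one piece of nontrivial content, namely the universal property of $Mf$ as a left Kan extension.

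Second, I would invoke Corollary~\ref{triv(f)=triv(f^*L)}, which gives an equivalence $\map_{\Line{R_X}}(f,R_X)\heq\mathrm{Triv}(f)$. By definition, $\mathrm{Triv}(f)$ is the fiber over $f$ of the Kan fibration
\[
\Fun(X^{\op},\Triv{R})\xra{\iota_*}\Fun(X^{\op},\Line{R}),
\]
and this fiber is precisely the space of lifts of $f\colon X^{\op}\to\Line{R}$ through $\iota\colon\Triv{R}\to\Line{R}$ in diagram \eqref{eq:26}. Concatenating the two equivalences yields the theorem.

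I do not expect any real obstacle: this statement is a bookkeeping result unpacking the definitions, and the substantive inputs have already been arranged upstream. The adjunction $p_!\dashv p^*$ is baked into the Kan-extension definition of $Mf$, and the identification of $\mathrm{Triv}(f)$ with $\map_{\Line{R_X}}(f,R_X)$ was established in Corollary~\ref{triv(f)=triv(f^*L)} by comparing two pullback squares with equivalent right legs. The one point worth a moment's care is that the strict simplicial fiber modelling ``space of lifts'' genuinely computes the correct homotopy type; this holds because $\iota$, and consequently $\iota_*$, is a Kan fibration, as observed just before Lemma~\ref{inf-t-pr-egl-bgl-fib}.
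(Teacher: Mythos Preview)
Your proposal is correct and follows essentially the same approach as the paper: both arguments reduce to Corollary~\ref{triv(f)=triv(f^*L)}, identifying $\mathrm{Triv}(f)$ with $\map_{\Line{R_X}}(f,R_X)$, together with the equivalence $\Rorient(Mf,R)\heq\map_{\Line{R_X}}(f,R_X)$ already recorded in Definition~\ref{inf-def-2}. The paper's proof is simply terser, and your added remark about $\iota_*$ being a Kan fibration (so that the strict fiber computes the space of lifts) is a welcome clarification.
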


\begin{proof}
Corollary \ref{triv(f)=triv(f^*L)} says that the space
$\mathrm{Triv}(f)$ of factorizations of $f$ through $\iota$ is
equivalent to the mapping space $\map_{\Line{R_{X}}} (f,p^{*}R)$.  
\end{proof}

\begin{Corollary}\label{t-thom-iso-infty-cat-I}
An orientation of $Mf$ determines an equivalence of $R$-modules
\[
Mf \heq \splus X \Smash R.
\]
\end{Corollary}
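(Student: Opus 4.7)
The plan is to combine Theorem~\ref{t-th-or-thy-infty-lifting} with Proposition~\ref{t-triv-is-smash}. By the theorem, an orientation of $Mf$ is (equivalent to) a lift of $f\colon X^{\op}\to\Rwe$ through the Kan fibration $\iota\colon\Rtriv\to\Rwe$. Via Corollary~\ref{triv(f)=triv(f^*L)}, such a lift is the same datum as an $R_X$-module equivalence $f\heq R_X=p^*R$ in $\Line{R_X}$, i.e., a trivialization of the line bundle $f$ in the functor $\i$-category.

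Next I would transport this equivalence through the inclusion $i\colon\Rwe\hookrightarrow\Rmod$ to produce an equivalence $i\circ f\heq i\circ p^*R$ in $\Mod{R_X}\heq\Fun(X^{\op},\Rmod)$. Because $p_!$ is a functor of $\i$-categories it preserves equivalences, so we obtain a canonical equivalence of $R$-modules
\[
Mf=p_!(i\circ f)\heq p_!\,p^*R.
\]
Finally, Proposition~\ref{t-triv-is-smash} applied to $M=R$ identifies $p_!\,p^*R$ with $\splus X\Smash R$, yielding the desired equivalence $Mf\heq\splus X\Smash R$.

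There is essentially no hard step: the content of the corollary is already packaged in the two earlier results, and the corollary is the functoriality of $Mf$ combined with the orientation--trivialization correspondence. The one point that deserves attention is that the ``trivialization'' obtained from the orientation is not merely a pointwise equivalence $L_x\heq R$ for each $x\in X$, but an equivalence in the functor $\i$-category $\Mod{R_X}$, encoding compatibility with all higher automorphism data; this is precisely the naturality needed to apply $p_!$ and conclude. I expect the main bookkeeping issue (rather than a mathematical obstacle) to be tracking that the two meanings of $R_X$---as a $\Rwe$-valued constant functor and as its image $p^*R$ in $\Mod{R_X}$---agree under the inclusion $i$, which is immediate since $\Rwe\subset\Rmod$ is a full $\i$-subcategory.
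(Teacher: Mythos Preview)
Your proposal is correct and follows essentially the same route as the paper: identify an orientation with an equivalence $f\heq p^{*}R$ in $\Line{R_{X}}$ (hence in $\Mod{R_{X}}$), apply $p_{!}$, and then invoke Proposition~\ref{t-triv-is-smash}. The only cosmetic difference is that the paper cites Definition~\ref{inf-def-2} directly for the identification $\Rorient(Mf,R)\heq\map_{\Line{R_{X}}}(f,p^{*}R)$, whereas you reach the same point via Theorem~\ref{t-th-or-thy-infty-lifting} and Corollary~\ref{triv(f)=triv(f^*L)}.
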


\begin{proof}
If $\C$ is an $\infty$-category,  write $\mathrm{Iso} (\C) (a,b)$
for the subspace $\map_{\C^{\simeq}} (a,b)\subseteq \map_{\C} (a,b)$
consisting of equivalences  (see \eqref{eqn:ho}).  By definition, 
$\map_{\Line{R_X}}(f,R_{X}) =\mathrm{Iso} (\Mod{R_{X}}) (f,p^*R)$,
and so \eqref{inf-eq:2} gives an equivalence 
$\Rorient(Mf,R)\too \mathrm{Iso} (\Mod{R_{X}})(f,p^*R)$.  The desired map is
the composite 
\begin{align*}
\mathrm{Iso} (\Mod{R_{X}}) (f,p^*R)\too
\mathrm{Iso} (\Mod{R}) (p_!f,p_!p^*R)\too \mathrm{Iso} (\Mod{R})
(p_!f,\Sigma^\i_+ X\land R).
\end{align*}
Here the second
map applies $p_!$ and the last map composes with the
equivalence $p_!p^*R\to\Sigma^\i_+ X\land R$ of Proposition \ref{t-triv-is-smash}.
\end{proof}

\subsection{Twisted homology and cohomology}

Recall that the $R$-module Thom spectrum $Mf$ of the map
$f\colon X^{\op}\to\Rwe$, which 
we think of as classifying an $R$-line bundle on $X$, is the
pushforward $Mf\simeq p_! f$ of the composite
\[
X^{\op}\overset{f}{\too}\Rwe\too\Mod{R}.
\]
The homotopy groups $\pi_n Mf$ can be computed as homotopy classes of
$R$-module maps from $\Sigma^n R$ to $Mf$, which is a convenient
formulation because the twisted $R$-cohomology groups are dually
homotopy classes of $R$-module maps from $Mf$ to $\Sigma^n R$. 

\begin{Definition}
Let $R$ be an $A_\i$ ring spectrum, let $X$ be a space with projection
$p\colon X\to *$ to the point, and let $f\colon X\to\Rwe$ be an
$R$-line bundle on $X$.  Then the $f$-twisted $R$-homology and
$R$-cohomology of $X$ are the mapping spectra 
\begin{align*}
R^f(X) =&\,\Map_R(R,Mf)\simeq Mf\\
R_f(X) =&\,\Map_R(Mf,R)\simeq\Map_{R_X}(f,R_X),
\end{align*}
formed in the stable $\i$-category $\Mod{R}$ of $R$-modules (or
$\Fun(X^{\op},\Mod{R})$ of $R_X$-modules). 
\end{Definition}

\noindent Here recall that $R_X\simeq p^*R$ is the constant bundle of
$R$-modules $X^{\op} \to \Rwe\to\Mod{R}$, and the equivalence of
mapping spectra $\Map_R(Mf,R)\simeq\Map_{R_X}(f,R_X)$ follows from the
equivalence, for each integer $n$, of mapping spaces 
\[
\map_R(p_! f,\Sigma^n R)\simeq\map_{R_X}(f,p^*\Sigma^n R)
\]
that results from the fact that $p^*$ is right adjoint to $p_!$.

Note that, since $R$ is only assumed to be an $A_\infty$ ring
spectrum, the homotopy category of $\Mod{R}$ does not usually admit a
closed monoidal structure with unit $R$;  
nevertheless, we still regard $R_f(X)$ as the ``$R$-dual'' spectrum
$\Map_R(Mf,R)$ of $Mf\simeq R^f(X)$, or as the ``spectrum of (global)
sections'' $\Map_{R_X}(f,R_X)$ of the $R$-line bundle $f$. 
Also, the notation $R^f(X)$ and $R_f(X)$ is designed so that, for an
integer $n$, we have the $f$-twisted $R$-homology and $R$-cohomology
{\em groups} 
\begin{align*}
R^f_n(X) =&\,\pi_0\map_R(\Sigma^n R,Mf)\cong\pi_n Mf\\
R_f^n(X) =&\,\pi_{0}\map_R(Mf,\Sigma^n
R)\cong\pi_{0}\map_{R_X}(f,p^*\Sigma^n R). 
\end{align*}

A consequence of our work with orientations is the following
untwisting result:

\begin{Corollary}
If $f\colon X^{\op}\to\Rwe$ admits an orientation, then $Mf\simeq
R\land\Sigma^\infty_+ X$, and the twisted $R$-homology and
$R$-cohomology spectra 
\begin{align*}
R^f(X)\simeq &\, R\land\Sigma^\infty_+ X\\
R_f(X)\simeq &\, \Map(\Sigma^\infty_+ X,R)
\end{align*}
reduce to the ordinary $R$-homology and $R$-cohomology spectra of $X$.
\end{Corollary}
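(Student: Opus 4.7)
The plan is that this corollary follows essentially formally from Corollary \ref{t-thom-iso-infty-cat-I} together with the adjunction defining $R$-modules. First I would invoke Corollary \ref{t-thom-iso-infty-cat-I}: the hypothesis that $f$ admits an orientation produces a specific $R$-module equivalence
\[
Mf \;\simeq\; \Sigma^{\infty}_{+} X \wedge R,
\]
which is the first conclusion. This is the only place the orientation hypothesis is used; everything else is a rewriting.

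Next, for the twisted homology, unwind the definition
\[
R^{f}(X) \;=\; \Map_{R}(R, Mf) \;\simeq\; Mf
\]
and substitute the equivalence above to get $R^{f}(X) \simeq R \wedge \Sigma^{\infty}_{+} X$, as desired.

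For the twisted cohomology, substitute into $R_{f}(X) = \Map_{R}(Mf, R)$ to obtain
\[
R_{f}(X) \;\simeq\; \Map_{R}\bigl(\Sigma^{\infty}_{+} X \wedge R,\; R\bigr).
\]
Here I would use the free--forgetful adjunction between $\Mod{R}$ and $\spectra$: the functor $(-)\wedge R \colon \spectra \to \Mod{R}$ is left adjoint to the forgetful functor, so mapping spectra satisfy $\Map_{R}(E\wedge R, R) \simeq \Map(E, R)$ for any spectrum $E$. Applied with $E = \Sigma^{\infty}_{+} X$, this yields $R_{f}(X) \simeq \Map(\Sigma^{\infty}_{+} X, R)$.

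There is no real obstacle; the substantive content was already packaged into Corollary \ref{t-thom-iso-infty-cat-I} and Proposition \ref{t-triv-is-smash}. The only mild subtlety worth stating explicitly is the free--forgetful adjunction at the spectrum level, which upgrades the space-level adjunction $(p_{!}, p^{*})$ used in Proposition \ref{t-triv-is-smash} to an identification of mapping spectra — this is standard for presentable stable $\i$-categories and follows, for example, by stabilizing the corresponding adjunction of underlying $\i$-categories.
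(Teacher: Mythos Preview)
Your proof is correct and follows essentially the same route as the paper: invoke Corollary~\ref{t-thom-iso-infty-cat-I} for $Mf\simeq R\wedge\Sigma^\infty_+ X$, read off the homology statement, and use the free--forgetful adjunction $\Map_R(\Sigma^\infty_+ X\wedge R,R)\simeq\Map(\Sigma^\infty_+ X,R)$ for cohomology. The paper's proof is terser but identical in content.
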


\begin{proof}
Indeed, Corollary \ref{t-thom-iso-infty-cat-I} gives equivalences
$\Map_R(R,Mf)\simeq Mf\simeq R\land\Sigma^\infty_+ X$ and
$\Map_R(R\land\Sigma^\i_+ X,R)\simeq\Map(\Sigma^\i_+ X,R)$. 
\end{proof}

\section{Morita theory and Thom spectra}\label{sec:comp-thom-spectra}

In this section we interpret the construction of the Thom spectrum
from the perspective of Morita theory.  This viewpoint is implicit in
the ``algebraic'' definition of the Thom spectrum of $f \colon X \to
B\GL{R}$ in~\cite{units-sma} as the derived smash product 
\[
 \Malg f\eqdef \splus P \sma^\mathrm{L}_{\splus \GL{R}} R,
\]
where $P$ is the pullback of the diagram
\[
\xymatrix{
X \ar[r] & B\GL{R} & \ar[l] E\GL{R}. \\
}
\]
As passage to the pullback induces an equivalence between spaces over
$B\GL{R}$ and $\GL{R}$-spaces, and the target category of $R$-modules
is stable, we can regard the Thom spectrum as essentially given by a
functor from (right) $\splus \GL{R}$-modules to $R$-modules. 

Roughly speaking, Morita theory (more precisely, the
Eilenberg-Watts theorem) implies that any continuous functor from 
(right) $\splus \GL{R}$-modules to (right) $R$-modules which preserves
homotopy colimits and takes $\GL{R}$ to $R$ can be realized as
tensoring with an appropriate $(\splus \GL{R})$-$R$ bimodule.  In
particular, this tells us that the Thom spectrum functor is
characterized amongst such functors by the additional data of the
action of $\GL{R}$ on $R$, equivalently a map $B\GL{R}\to B\GL{R}.$

Beyond its conceptual appeal, this viewpoint on the Thom spectrum
functor provides the basic framework for comparing the construction
which we have discussed in this paper with $\Malg$ and also with
the ``neo-classical'' construction of Lewis and May and the
parametrized construction of May and Sigurdsson.

After discussing the analogue of the classical Eilenberg-Watts theorem
in the context of ring spectra in \S\ref{sec:morita}, in
\S\ref{sec:colim-pres-funct} we classify 
colimit-preserving functors between $\i$-categories.  Our
classification leads in \S\ref{sec:i-categorical-thom} to a
characterization of the ``geometric'' Thom spectrum functor $M=\Mgeo$
of this paper, which 
serves as the basis for comparison with the ``algebraic'' Thom
spectrum $\Malg$ from \cite{units-sma}.  

In \S\ref{sec:revi-algebr-thom} we briefly review the
construction of $\Malg$, and
characterize it using Morita theory.  In
\S\ref{sec:comparison} we prove the equivalence of $\Mgeo$
and $\Malg$.  
The close relationship between our $\i$-categorical
construction of the Thom spectrum and the definition of May and
Sigurdsson \cite[23.7.1,23.7.4]{MR2271789} allows us (in
\S\ref{sec:neo-classical-thom}) to compare May and Sigurdsson's construction of the Thom spectrum 
(and by extension the ``neo-classical'' Lewis-May construction) to the
ones in this paper.

In \S\ref{sec:algebr-thom-spectr} we also sketch a direct
comparison between $\Mgeo$ and $\Malg$;
although the argument  does not characterize the functor among all functors from
$\GL{R}$-modules to $R$-modules, we believe it provides a useful
concrete depiction of the situation.  

\subsection{The Eilenberg-Watts theorem for categories of module spectra}
\label{sec:morita}

The key underpinning of classical Morita theory is the Eilenberg-Watts
theorem, which for rings $A$ and $B$ establishes an equivalence
between the category of colimit-preserving functors $\Mod{A} \to
\Mod{B}$ and the category of $(A,B)$-bimodules.  The proof of the theorem
proceeds by observing that any functor $T\colon \Mod{A} \to \Mod{B}$
specifies a bimodule structure on $TA$ with the $A$-action given by
the composite 
\[A \to F_A (A,A) \to F_B(TA, TA).\]
It is then straightforward to check that the functor $- \otimes_A TA$
is isomorphic to the functor $T$, using the fact that both of these
functors preserve colimits.

In this section, we discuss the generalization of this result to the
setting of categories of module spectra.  The situation here is more
complicated than in the discrete case; for instance, it is
well-known that there are equivalences between categories of module
spectra which are not given by tensoring with bimodules, and there are
similar difficulties with the most general possible formulation of the
Eilenberg-Watts theorem.  However, much of the subtlety here comes
from the fact that unlike in the classical situation, compatibility
with the enrichment in spectra is not automatic (see for example the
excellent recent paper of Johnson \cite{Niles} for a comprehensive
discussion of the situation).  By assuming our functors are enriched,
we can recover a close analogue of the classical result.

Let $A$ and $B$ be (cofibrant) $S$-algebras, and let $T$ be an
enriched functor
\[
T\colon \Mod{A} \to \Mod{B}.  
\]
Specifically, we
assume that $T$ induces a map of function spectra $F_A(X,Y) \to
F_B(TX,TY)$, and furthermore that $T$ preserves tensors (in
particular, homotopies) and homotopy colimits.
For instance, these conditions are satisfied if $T$ is a Quillen
left-adjoint.  The assumption that $T$ is homotopy-preserving implies that $T$ preserves weak equivalences between
cofibrant objects and so admits a total left-derived functor $T^\mathrm{L}
\colon \Ho{\Mod{A}} \to \Ho{\Mod{B}}$.  Furthermore, $T(A)$ is an
$A$-$B$ bimodule with the bimodule structure induced just as above.

Using an elaboration of
the arguments of \cite[4.1.2]{MR1928647} (see also
\cite[4.20]{MR2122154}) we now can prove the following Eilenberg-Watts
theorem in this setting.  We will work in the EKMM categories of
$S$-modules \cite{EKMM}, so we can assume that all objects are fibrant.

\begin{Proposition}\label{prop:watts}
Given the hypotheses of the preceding discussion, there is a natural
isomorphism in $\Ho{\Mod{B}}$ between the total left-derived functor
$T^\mathrm{L}(-)$ and the derived smash product $(-) \sma^\mathrm{L} T(A)$, regarding
$T(A)$ as a bimodule as above.
\end{Proposition}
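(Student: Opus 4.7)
The plan is to construct a natural transformation
\[
\eta_M \colon M \sma^{\mathrm{L}}_{A} T(A) \too T^{\mathrm{L}}(M)
\]
of homotopy functors $\Ho(\Mod{A}) \to \Ho(\Mod{B})$, verify that $\eta_A$ is an equivalence, and then extend by the cell structure of right $A$-modules. Point-set technicalities are controlled by the EKMM framework, in which we may assume $A$ and $B$ are cofibrant $S$-algebras, all objects are fibrant, and cell $A$-modules provide functorial cofibrant replacements through which the enriched functor $T$ is well-behaved.

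To construct $\eta$, the key ingredient is the enrichment of $T$. For a cofibrant right $A$-module $M$, the spectrum map $F_A(A,M) \to F_B(T(A),T(M))$ supplied by $T$, combined with the natural equivalence $F_A(A,M) \simeq M$, gives a spectrum map $M \to F_B(T(A),T(M))$ whose adjoint
\[
M \sma T(A) \too T(M)
\]
is a map of right $B$-modules. Enriched functoriality of $T$ (the compatibility of the maps on function spectra with composition in $\Mod{A}$ and $\Mod{B}$) implies that the two composites $M \sma A \sma T(A) \rightrightarrows M \sma T(A) \to T(M)$, one using the right $A$-action on $M$ and the other using the left $A$-action on $T(A)$ supplied by the bimodule structure described in the statement, coincide. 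Hence the map descends to a natural $B$-module map $\eta_M \colon M \sma_A T(A) \to T(M)$. When $M = A$, the construction tautologically reduces to the canonical identification $A \sma_A T(A) \simeq T(A)$, so $\eta_A$ is an equivalence.

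For general $M$, one uses a cell induction. Both functors $(-) \sma^{\mathrm{L}}_{A} T(A)$ and $T^{\mathrm{L}}$ preserve homotopy colimits of right $A$-modules: the former because smashing over $A$ is a left adjoint, the latter by hypothesis. Since $T$ preserves tensors, $\eta_{\Sigma^n A}$ is an equivalence for every integer $n$. Because every cofibrant right $A$-module is built as a homotopy colimit of cells of the form $\Sigma^n A$, the natural transformation $\eta$ is an equivalence on all of $\Mod{A}$, yielding the desired Eilenberg--Watts isomorphism.

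The step requiring the most care is the construction of $\eta$ itself, specifically the verification that the adjoint map $M \sma T(A) \to T(M)$ coequalizes the two $A$-actions and therefore descends to a map out of $M \sma_A T(A)$. This is a direct but slightly fiddly diagram chase in the enriched structure of $T$, paralleling the argument in \cite[4.1.2]{MR1928647}. Once $\eta$ is in place, its identification on $A$ and the preservation of homotopy colimits on both sides combine to give the result without further difficulty.
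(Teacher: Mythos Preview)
Your proposal is correct and follows essentially the same approach as the paper: construct a natural comparison map $(-)\sma_A T(A)\to T(-)$ from the enrichment, verify it is an equivalence on $A$, and then extend to all cofibrant $A$-modules using that both sides preserve homotopy colimits. The paper is terser (it simply invokes ``continuity'' to produce the map and explicitly passes through a cofibrant bimodule replacement $T'$ of $T(A)$ to model the derived smash product), but the underlying argument is the same as yours.
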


\begin{proof}
By continuity, there is a natural map of $B$-modules
\[(-) \sma_A T(A) \to T(-).\]
Let $T'$ denote a cofibrant replacement of $T(A)$ as an $A$-$B$
bimodule.  Since the functor $(-) \sma_A T'$ preserves weak
equivalences between cofibrant $A$-modules, there is a total
left-derived functor $(-) \sma_A^\mathrm{L} T'$ which models $(-) \sma_A^\mathrm{L}
T(A)$.  Thus, the composite
\[
(-) \sma_A T' \to (-) \sma_A T(A) \to T(-).
\]
descends to the homotopy category to produce a natural map
\[
(-) \sma_A^\mathrm{L} T(A) \to T^\mathrm{L}(-).
\]
The map is clearly an equivalence for the free $A$-module of rank
one; i.e. $A$.  Since both sides commute with homotopy colimits, we
can inductively deduce that the first map is an equivalence for all
cofibrant $A$-modules, and this implies that the map of derived
functors is an isomorphism.
\end{proof}

To characterize the
Thom spectrum functor amongst functors from spaces over ${B\GL{R}}$ to
$R$-modules, it is useful to formulate Proposition~\ref{prop:watts} in
terms of $\i$-categories.  One reason is that (as we recall in 
Subsection~\ref{sec:revi-algebr-thom}) the ``algebraic'' Thom spectrum
of \cite{units-sma} is  
the composition of a right derived
functor (which is an equivalence) and a left derived functor.  We
remark that much of the technical difficulty in the neo-classical
theory of the Thom spectrum functor arises from the difficulties
involved in dealing with point-set models of such composites.
This is the kind of formal situation that the
$\i$-category framework handles well.  

\subsection{Colimit-preserving functors}

\label{sec:colim-pres-funct}

In this section we study functors between $\i$-categories
which preserve colimits.  Specializing to module categories, we obtain
a version of the Eilenberg-Watts theorem which applies to both
the algebraic and the geometric Thom spectrum.  

We begin by considering cocomplete $\i$-categories.  Let $\C$ be a
small $\i$-category, and consider the $\i$-topos
$\Pre(\C)=\text{Fun}(\C^{\op},\spaces)$ of presheaves of 
$\i$-groupoids on $\C$.  Recall that $\Pre(\C)$ comes equipped with a
fully faithful Yoneda embedding 
\begin{equation} \label{eq:63}
\C\longrightarrow\Pre(\C)
\end{equation}
which exhibits $\Pre(\C)$ as the ``free cocompletion'' \cite[5.1.5.8]{HTT} of $\C$.
More precisely, writing $\Fun^\mathrm{L}(\C,\D)$ for the full subcategory of $\Fun(\C,\D)$ consisting of the colimit-preserving functors, we have the following:

\begin{Lemma}[\HTT{5.1.5.6}] \label{t-le-yoneda-ini-colim-preserv}
For any cocomplete $\i$-category $\D$,
precomposition with the Yoneda embedding induces an equivalence of
$\i$-categories
\begin{equation}\label{eq:74}
\Fun^\mathrm{L}(\Pre(\C),\D)\longrightarrow\Fun(\C,\D).
\end{equation}
\end{Lemma}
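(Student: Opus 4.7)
The plan is to construct an inverse to the restriction-along-Yoneda functor by left Kan extension. Writing $y\colon \C \to \Pre(\C)$ for the Yoneda embedding, I would define a candidate inverse
\[
\Fun(\C,\D) \longrightarrow \Fun^\mathrm{L}(\Pre(\C),\D), \qquad f \mapsto \mathrm{Lan}_y f,
\]
given by the pointwise formula $(\mathrm{Lan}_y f)(X) \heq \colim_{(c,\, y(c) \to X)\, \in\, \C_{/X}} f(c)$. Since $\D$ is cocomplete and $\C$ is small, these colimits exist and assemble into a functor; the $\Fun^\mathrm{L}$ factorization needs to be justified.

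The linchpin of the argument is the density of representables: every presheaf $X$ is canonically the colimit of its ``category of elements,''
\[
X \heq \colim_{(c,\, y(c) \to X)\, \in\, \C_{/X}} y(c),
\]
where the slice $\C_{/X}$ is the pullback $\C \times_{\Pre(\C)} \Pre(\C)_{/X}$. Assuming this, verification proceeds in three main steps. First, $\mathrm{Lan}_y f$ preserves colimits: since $\Pre(\C)$ is generated under colimits by representables and $\mathrm{Lan}_y f$ is defined by the pointwise colimit formula on each object, commutation with arbitrary colimits follows from general nonsense about Kan extensions along the inclusion of a dense subcategory. Second, the composite $y^{*} \circ \mathrm{Lan}_y$ is equivalent to the identity on $\Fun(\C,\D)$. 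This amounts to checking that $(\mathrm{Lan}_y f)(y(c)) \heq f(c)$, which follows from the full faithfulness of the Yoneda embedding: the slice $\C_{/y(c)}$ has a terminal object $(c, \id_{y(c)})$, so the defining colimit collapses to $f(c)$. Third, for any colimit-preserving $F \in \Fun^\mathrm{L}(\Pre(\C),\D)$, one has $\mathrm{Lan}_y(F \circ y) \heq F$. Indeed, by the density formula and the colimit-preservation of $F$,
\[
F(X) \heq F\bigl(\colim_{\C_{/X}} y(c)\bigr) \heq \colim_{\C_{/X}} F(y(c)),
\]
which is exactly $\mathrm{Lan}_y(F \circ y)(X)$ by construction. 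Naturality in $X$ promotes these pointwise equivalences to an equivalence of functors.

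The main obstacle lies in establishing the density formula $X \heq \colim_{\C_{/X}} y(c)$ rigorously within the quasicategorical framework. In ordinary category theory this is essentially tautological, but in the $\i$-categorical setting one must identify the correct indexing $\i$-category (via straightening/unstraightening of the left fibration $\C_{/X} \to \C$ classified by the presheaf $X$) and verify that the colimit really computes $X$ at each object of $\C$. This is the content of \HTT{5.1.5.3}, from which the desired equivalence follows formally. A secondary subtlety is assembling the above data into an equivalence of $\i$-categories rather than merely a bijection on equivalence classes, which requires that the above constructions be performed functorially and that the verifications produce homotopies coherent enough to exhibit natural equivalences of functors $\Fun(\C,\D) \to \Fun(\C,\D)$ and $\Fun^\mathrm{L}(\Pre(\C),\D) \to \Fun^\mathrm{L}(\Pre(\C),\D)$.
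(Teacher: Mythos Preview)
The paper does not supply its own proof of this lemma; it is stated with a citation to \HTT{5.1.5.6} and used as a black box. Your proposal correctly sketches the standard argument (left Kan extension along the Yoneda embedding as inverse, with density \HTT{5.1.5.3} as the key input), which is essentially the proof given in \cite{HTT}; so there is nothing to compare on the paper's side beyond the citation itself.
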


We shall be particularly interested in the case that $\C$ is an
$\i$-groupoid, so that
\begin{equation}\label{eq:8}
    \Pre (\C) = \Fun (\C^{\op},\Gpd_{\i}) \heq \Gpd_{\i/\C},
\end{equation}
as in Remark \ref{rem-4}.
In particular, given a functor $f:\C\to\D$, we may extend by colimits to a colimit-preserving functor $\tilde{f}\colon\Gpd_{\i/\C}\to\D$.

\begin{Corollary}\label{t-co-yoneda-inverse}
If $g\colon \Gpd_{\i/\C}\to \D$ is any colimit-preserving functor  whose
restriction along the Yoneda embedding $\C\to\Gpd_{\i/\C}$ is equivalent to $f$, then $g$ is equivalent to $\tilde{f}.$ 
\end{Corollary}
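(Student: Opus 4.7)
The plan is to deduce this directly from Lemma \ref{t-le-yoneda-ini-colim-preserv}, using the identification \eqref{eq:8} to transport the universal property of the free cocompletion into our setting. Since $\C$ is an $\i$-groupoid, we have $\Pre(\C)\heq\Gpd_{\i/\C}$, so Lemma \ref{t-le-yoneda-ini-colim-preserv} asserts that restriction along the Yoneda embedding $\C\to\Gpd_{\i/\C}$ determines an equivalence of $\i$-categories
\[
\Fun^\mathrm{L}(\Gpd_{\i/\C},\D)\xra{\heq}\Fun(\C,\D).
\]

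First I would observe that $\tilde{f}$ is, by construction, a preimage of $f$ under this restriction equivalence: it is the (essentially unique) colimit-preserving extension of $f$ along the Yoneda embedding. Next, I would observe that $g$ is also, by hypothesis, a colimit-preserving functor $\Gpd_{\i/\C}\to\D$ whose restriction along the Yoneda embedding is equivalent to $f$. Both $g$ and $\tilde{f}$ therefore belong to $\Fun^\mathrm{L}(\Gpd_{\i/\C},\D)$ and map to objects equivalent to $f$ in $\Fun(\C,\D)$.

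Since an equivalence of $\i$-categories is in particular fully faithful, two objects of $\Fun^\mathrm{L}(\Gpd_{\i/\C},\D)$ whose images in $\Fun(\C,\D)$ are equivalent must themselves be equivalent. Applying this to $g$ and $\tilde{f}$ gives the desired equivalence $g\heq\tilde{f}$.

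There is no real obstacle here; the only minor point worth flagging is that one is tacitly invoking \eqref{eq:8} to match the domain of $g$ with the presheaf category appearing in Lemma \ref{t-le-yoneda-ini-colim-preserv}, and one must check that the Yoneda embedding referenced in that lemma corresponds, under this identification, to the Yoneda embedding $\C\to\Gpd_{\i/\C}$ described in Remark \ref{rem-4}. Once that bookkeeping is done, the corollary is immediate from the fact that an equivalence of $\i$-categories reflects equivalences.
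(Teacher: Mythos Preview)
Your argument is correct and is exactly the intended one: the paper states this as an unproved corollary precisely because it is immediate from Lemma~\ref{t-le-yoneda-ini-colim-preserv} together with the identification~\eqref{eq:8}, which is just what you spell out. The bookkeeping point you flag about matching Yoneda embeddings is the only thing to verify, and it is handled by the discussion surrounding Remark~\ref{rem-4}.
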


\begin{Lemma}[\HA{1.4.4.4, 1.4.4.5}] Let $\C$ and $\D$ be
presentable $\i$-categories such that $\D$ is stable.  Then
$$
\Omega^\i_-:\Stab(\C)\longrightarrow\C
$$
admits a left adjoint
$$
\Sigma^\i_+:\C\longrightarrow\Stab(\C),
$$
and precomposition with the $\Sigma^\i_+$ induces an equivalence of
$\i$-categories
$$
\Fun^\mathrm{L}(\Stab(\C),\D)\longrightarrow\Fun^\mathrm{L}(\C,\D).
$$
\end{Lemma}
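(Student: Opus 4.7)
The plan is to verify both assertions in turn, treating the construction of the adjoint first and then the universal property.

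For the existence of $\Sigma^{\infty}_{+}$, I would first observe that $\Stab(\C)$ is itself presentable. Recall that $\Stab(\C)$ is the limit of the tower $\cdots\xra{\Omega}\C_{*}\xra{\Omega}\C_{*}$, where $\C_{*}$ is presentable since $\C$ is, and each $\Omega$ is a right adjoint (with left adjoint $\Sigma$). Limits in the $\infty$-category $\mathrm{Pr}^{R}$ of presentable $\infty$-categories and right adjoint functors agree with the corresponding limits in $\mathrm{Cat}_{\infty}$ (HTT 5.5.3.18), so $\Stab(\C)$ is presentable and the structural projection $\Omega^{\infty}_{-}:\Stab(\C)\to\C_{*}\to\C$ is a right adjoint between presentable $\infty$-categories. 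The adjoint functor theorem for presentable $\infty$-categories then produces the desired left adjoint $\Sigma^{\infty}_{+}$.

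For the universal property, the key observation is that when $\D$ is already stable, the unit $\Sigma^{\infty}_{+}:\D\to\Stab(\D)$ is an equivalence. Indeed, stability of $\D$ means $\Omega:\D\to\D$ is an equivalence, so the tower defining $\Stab(\D)$ has all transition maps equivalences, and its limit is identified with $\D_{*}\simeq\D$ (using that $\D$ has a zero object). Granting this, for any colimit-preserving $F:\C\to\D$ I would construct an extension $\tilde{F}:\Stab(\C)\to\D$ by applying $\Stab$ functorially at the level of colimit-preserving functors to get $\Stab(F):\Stab(\C)\to\Stab(\D)$, and composing with the inverse of the equivalence $\Sigma^{\infty}_{+}:\D\to\Stab(\D)$. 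Naturality then yields $\tilde{F}\circ\Sigma^{\infty}_{+}\simeq F$, which establishes essential surjectivity of the precomposition functor.

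The main obstacle is establishing full faithfulness of precomposition with $\Sigma^{\infty}_{+}$ — equivalently, showing that any colimit-preserving functor $G:\Stab(\C)\to\D$ is determined up to a contractible space of choices by its restriction $G\circ\Sigma^{\infty}_{+}$. This reduces to verifying that the essential image of $\Sigma^{\infty}_{+}$ generates $\Stab(\C)$ under colimits. The key point is that any $X\in\Stab(\C)$ can be reconstructed from its sequence of deloopings $X_{n}=\Omega^{\infty-n}_{-}X\in\C_{*}$ together with the structure equivalences $X_{n}\simeq\Omega X_{n+1}$, and this reconstruction is expressible as a filtered colimit in $\Stab(\C)$ of suspensions of suspension spectra built from the $X_{n}$. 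Once this generation statement is established, full faithfulness follows by a colimit argument formally analogous to the universal property of free cocompletion recorded in Lemma \ref{t-le-yoneda-ini-colim-preserv}.
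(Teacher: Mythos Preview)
The paper does not prove this lemma; it is stated with a citation to \HA{1.4.4.4, 1.4.4.5} and used as a black box. So there is no in-paper proof to compare against, only Lurie's.

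Your argument for the existence of $\Sigma^\infty_+$ is correct and matches Lurie's. For the universal property, your outline is workable but has two soft spots. First, you invoke ``$\Stab$ functorially at the level of colimit-preserving functors,'' but the limit description of $\Stab(\C)$ via the $\Omega$-tower is only manifestly functorial for functors commuting with $\Omega$, i.e.\ \emph{right} adjoints; to get functoriality for left adjoints you must either pass to right adjoints and back, or use the dual description of $\Stab(\C)$ as the colimit in $\mathrm{Pr}^{\mathrm{L}}$ of $\C_*\xra{\Sigma}\C_*\xra{\Sigma}\cdots$, which you do not mention. Second, your generation argument writes $X\heq\colim_n\Sigma^{-n}\Sigma^\infty X_n$ with $X_n\in\C_*$: this exhibits generation by the image of $\Sigma^\infty$ rather than $\Sigma^\infty_+$, and the terms involve desuspensions, which are not literally colimits. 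You still owe the (easy) reduction from $\C_*$ to $\C$ via $(\,-\,)_+$, and the observation that a colimit-preserving functor between stable $\infty$-categories is exact and hence commutes with $\Sigma^{-1}$. With those supplied the argument goes through, but it is not quite ``formally analogous'' to the free-cocompletion case in the way you suggest. Lurie's own proof sidesteps all of this: the anti-equivalence $\mathrm{Pr}^{\mathrm{L}}\heq(\mathrm{Pr}^{\mathrm{R}})^{\op}$ identifies $\Stab(\C)$ as a colimit in $\mathrm{Pr}^{\mathrm{L}}$, whence $\Fun^{\mathrm{L}}(\Stab(\C),\D)$ is the limit of the tower $\Fun^{\mathrm{L}}(\C_*,\D)$ along precomposition with $\Sigma$, and for stable $\D$ every transition map is an equivalence (inverted by postcomposition with $\Sigma_\D^{-1}$), so the limit collapses to $\Fun^{\mathrm{L}}(\C_*,\D)\heq\Fun^{\mathrm{L}}(\C,\D)$.
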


Combining the universal properties of stabilization and the Yoneda
embedding, we obtain the following equivalence of $\i$-categories.

\begin{Corollary}
Let $\C$ and $\D$ be $\i$-categories such that $\D$ is stable and
presentable.  Then there are equivalences of $\i$-categories
$$
\Fun^\mathrm{L}(\Stab(\Pre(\C)),\D)\heq\Fun^\mathrm{L}(\Pre(\C),\D)\heq\Fun(\C,\D).
$$
\end{Corollary}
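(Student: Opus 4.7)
The plan is to combine the two results cited immediately before the Corollary, namely Lemma \ref{t-le-yoneda-ini-colim-preserv} (the universal property of the Yoneda embedding) and the preceding Lemma on stabilization (HA 1.4.4.4, 1.4.4.5). Both equivalences in the statement are instances of these lemmas once the hypotheses have been checked, so the proof amounts to a short verification followed by composition.

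First I would establish the second equivalence $\Fun^\mathrm{L}(\Pre(\C),\D)\simeq\Fun(\C,\D)$ by direct appeal to Lemma \ref{t-le-yoneda-ini-colim-preserv}. The only hypothesis required is that $\D$ be cocomplete, and this is automatic from the assumption that $\D$ is presentable. The equivalence is implemented by precomposition with the Yoneda embedding $\C\to\Pre(\C)$.

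Next I would deduce the first equivalence $\Fun^\mathrm{L}(\Stab(\Pre(\C)),\D)\simeq\Fun^\mathrm{L}(\Pre(\C),\D)$ from the stabilization lemma applied with $\C$ replaced by $\Pre(\C)$. Here the verification is that $\Pre(\C)=\Fun(\C^{\op},\spaces)$ is presentable, which is standard (it is an $\infty$-topos, hence presentable), and that $\D$ is stable and presentable, which is given by hypothesis. The equivalence is implemented by precomposition with the stabilization functor $\Sigma^\infty_+\colon\Pre(\C)\to\Stab(\Pre(\C))$.

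Finally, composing the two equivalences in the order displayed yields the chain of equivalences claimed in the Corollary; the composite is given by precomposition along $\C\to\Pre(\C)\xrightarrow{\Sigma^\infty_+}\Stab(\Pre(\C))$. There is no real obstacle here: the content of the Corollary is precisely that the two universal properties can be chained. If anything warrants care, it is tracking naturality of the equivalences in $\D$ (so that they are equivalences of $\infty$-categories rather than merely of underlying $\infty$-groupoids of functors), but this is already built into the statements of the cited lemmas.
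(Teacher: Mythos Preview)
Your proposal is correct and matches the paper's proof, which simply states that the Corollary follows from the two preceding lemmas. Your additional care in verifying the hypotheses (presentability of $\Pre(\C)$, cocompleteness of $\D$) is appropriate and makes explicit what the paper leaves implicit.
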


\begin{proof}
This follows from the last two lemmas.
\end{proof}

Now suppose that $\C$ and $\D$ have distinguished objects, given by
maps $*\to\C$ and $*\to\D$ from the trivial $\i$-category $*$.  Then
$\Pre(\C)$ and $\Stab(\Pre(\C))$ inherit distinguished objects via the
composite
$$
*\longrightarrow\C\overset{i}{\longrightarrow}\Pre(\C)\overset{\Sigma^\i_{+}}{\longrightarrow}\Stab(\Pre(\C)),
$$
where $i$ denotes the Yoneda embedding.  Note that the fiber sequence
$$
\Fun_{*/}(\C,\D)\longrightarrow\Fun(\C,\D)\longrightarrow\Fun(*,\D)\heq\D
$$
shows that the $\i$-category of pointed functors is equivalent to the
fiber of the evaluation map $\Fun(\C,\D)\to\D$ over the distinguished
object of $\D$.

\begin{Proposition}
Let $\C$ and $\D$ be $\i$-categories with distinguished objects such
that $\D$ is stable and cocomplete.  Then there are equivalences of
$\i$-categories
$$
\Fun_{*/}^\mathrm{L}(\Stab(\Pre(\C)),\D)\heq\Fun_{*/}^\mathrm{L}(\Pre(\C),\D)\heq\Fun_{*/}(\C,\D).
$$
\end{Proposition}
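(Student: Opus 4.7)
The plan is to deduce the pointed equivalences from the unpointed equivalences of the preceding corollary by passing to fibers over the distinguished object of $\D$. The fiber sequence displayed just before the proposition exhibits $\Fun_{*/}(\C,\D)$ as the fiber of the evaluation map $\Fun(\C,\D)\to\D$ over the distinguished object of $\D$, and the analogous definition applies to $\Fun_{*/}^\mathrm{L}(\Pre(\C),\D)$ and $\Fun_{*/}^\mathrm{L}(\Stab(\Pre(\C)),\D)$, using the distinguished objects of $\Pre(\C)$ and $\Stab(\Pre(\C))$ defined in the paper as the images of the distinguished object of $\C$ under the Yoneda embedding $i$ and the composite $\Sigma^\i_+\circ i$, respectively.

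The main step is to exhibit the commutative diagram
\[
\xymatrix{
\Fun^\mathrm{L}(\Stab(\Pre(\C)),\D)\ar[r]^-{\heq}\ar[dr] & \Fun^\mathrm{L}(\Pre(\C),\D) \ar[r]^-{\heq} \ar[d] & \Fun(\C,\D)\ar[dl]\\
& \D &
}
\]
in which the diagonal arrows are the evaluation maps at the distinguished objects, and the horizontal arrows are the equivalences of the preceding corollary, namely restriction along $\Sigma^\i_+$ and along $i$. Each triangle commutes up to canonical equivalence essentially tautologically: evaluating a restricted functor at the distinguished object of the smaller $\i$-category agrees with evaluating the original functor at the image of that distinguished object in the larger $\i$-category, and these images are by construction the distinguished objects of $\Pre(\C)$ and $\Stab(\Pre(\C))$.

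Finally, I take fibers over the distinguished object of $\D$ throughout the diagram. Since pullback along a fixed map in $\Cat_\i$ preserves equivalences of $\i$-categories, this yields the claimed equivalences
\[
\Fun_{*/}^\mathrm{L}(\Stab(\Pre(\C)),\D)\heq\Fun_{*/}^\mathrm{L}(\Pre(\C),\D)\heq\Fun_{*/}(\C,\D).
\]
The only mild technical point that I expect to encounter is the $\i$-categorical formulation of the commutativity in the main step: strictly speaking, one must promote the equivalences of \HTT{5.1.5.6} and of the preceding stabilization lemma to equivalences \emph{over} $\D$, which reduces to the naturality of these universal properties in the source $\i$-category and presents no genuine obstacle.
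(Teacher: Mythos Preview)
Your proposal is correct and follows essentially the same approach as the paper: the paper's proof is simply ``Take the fiber of $\Fun^\mathrm{L}(\Stab(\Pre(\C)),\D)\heq\Fun^\mathrm{L}(\Pre(\C),\D)\heq\Fun(\C,\D)$ over $*\to\D$,'' and your argument spells out exactly why this works by making explicit the compatibility of the unpointed equivalences with the evaluation maps to $\D$.
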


\begin{proof}
Take the fiber of
$\Fun^\mathrm{L}(\Stab(\Pre(\C)),\D)\heq\Fun^\mathrm{L}(\Pre(\C),\D)\heq\Fun(\C,\D)$
over $*\to\D$.
\end{proof}

\begin{Corollary}
Let $G$ be a group-like monoidal $\i$-groupoid $G$, let $BG$ be a
one-object $\i$-groupoid with $G\heq\Aut_{BG}(*)$, and let $\D$ be a
stable and cocomplete $\i$-category with a distinguished object $*$.
Then
\begin{align*}
\Fun_{*/}^\mathrm{L}(\Stab(\Pre(BG)),\D)\heq&\Fun_{*/}^\mathrm{L}(\Pre(BG),\D)\heq\\
&\Fun_{*/}(BG,\D)\heq\Fun(BG,B\Aut_\D(*));
\end{align*}
that is, specifying an action of $G$ on the distinguished object $*$
of $\D$ is equivalent to specifying a pointed colimit-preserving
functor from $\Pre(BG)$ (or its stabilization) to $\D$.
\end{Corollary}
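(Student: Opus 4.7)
The plan is to split the stated chain of equivalences into two halves. The first half,
$$\Fun_{*/}^\mathrm{L}(\Stab(\Pre(BG)),\D)\heq\Fun_{*/}^\mathrm{L}(\Pre(BG),\D)\heq\Fun_{*/}(BG,\D),$$
is just the preceding Proposition applied with $\C=BG$ pointed by its unique object; no additional argument is required.

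The substantive step is therefore the remaining equivalence $\Fun_{*/}(BG,\D)\heq\Fun(BG,B\Aut_\D(*))$. To establish it, I would first use that $BG$ is an $\i$-groupoid: by the defining pullback \eqref{eqn:ho}, any functor out of an $\i$-groupoid into $\D$ factors uniquely (up to a contractible space of choices) through the maximal sub-$\i$-groupoid $\D^{\simeq}\subseteq\D$, so that $\Fun(BG,\D)\heq\Fun(BG,\D^{\simeq})$ and, on fibers over the basepoint, $\Fun_{*/}(BG,\D)\heq\Fun_{*/}(BG,\D^{\simeq})$. Next, the pointed condition forces the unique object of $BG$ to be sent to $*\in\D^{\simeq}$; since $BG$ is connected, the image of the functor then lies entirely in the path component of $*$, which by essentially the same argument as in Proposition \ref{prop:Rline=BAutR} is precisely $B\Aut_\D(*)$, the full sub-$\i$-groupoid of $\D^{\simeq}$ spanned by objects equivalent to $*$.

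Combining the two identifications shows that a pointed functor $BG\to\D$ is the same data as a functor $BG\to B\Aut_\D(*)$, and passage to loops reinterprets this as a monoidal map $G\heq\Omega BG\to\Omega B\Aut_\D(*)\heq\Aut_\D(*)$, i.e.\ precisely an action of $G$ on the distinguished object $*\in\D$, as asserted in the informal statement of the corollary. The main subtlety is the identification of the path component of $*$ in $\D^{\simeq}$ with $B\Aut_\D(*)$, and the matching of the pointed decoration on $\Fun_{*/}(BG,\D)$ with the data of a functor into $B\Aut_\D(*)$ viewed as a pointed connected one-object $\i$-groupoid; both are essentially definitional once one has carefully unwound the pullback \eqref{eqn:ho} together with the definition of $B\Aut_\D(*)$.
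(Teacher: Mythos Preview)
Your proposal is correct and follows the same approach as the paper: the first two equivalences come directly from the preceding Proposition, and the last is the observation that a base-point preserving functor $BG\to\D$ necessarily factors through the full subgroupoid $B\Aut_\D(*)$. The paper's own proof is a single sentence to this effect; your version simply unpacks it by routing through $\D^{\simeq}$ and the connected component of $*$, which is a reasonable elaboration.
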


\begin{proof}
A base-point preserving functor $BG\to\D$ necessarily factors through the full subgroupoid $B\Aut_\D(*)$.
\end{proof}

Note that the $\i$-category $\Fun(BG,B\Aut_\D(*))$ is actually an
$\i$-groupoid, as $B\Aut_\D(*)$ is an $\i$-groupoid.

Putting this all together, consider the case in which the target
$\i$-category $\D$ is the $\i$-category of right $R$-modules for an
associative $S$-algebra $R$, pointed by the free rank one $R$-module
$R$.  Then $\Aut_\D(*)\heq GL_1 R$, and we have an 
$\i$-categorical version of the Eilenberg-Watts theorem.

\begin{Corollary}\label{t-co-e-w-i-sp-ov-BG} 
The space of pointed
colimit-preserving maps from the $\i$-category of spaces over $BG$ to
the $\i$-category of $R$-modules is 
equivalent to the space of monoidal maps from $G$ to $\GL{R}$, or
equivalently the space of maps from $BG$ to $B\GL{R}$.
\end{Corollary}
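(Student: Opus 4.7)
The plan is to apply the immediately preceding corollary with $\D = \Mod{R}$ pointed by the free rank one module $R$, and then unwind the resulting data using identifications already established in the paper. The statement is essentially a bookkeeping assembly of three ingredients: the presheaf/slice equivalence for $\infty$-groupoids, the preceding $\infty$-categorical Eilenberg–Watts-type corollary, and the identification of $\Aut_R(R)$ with $\GL{R}$.

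First, I would identify the source $\infty$-category. Since $BG$ is an $\infty$-groupoid, equation \eqref{eq:8} (together with Remark \ref{rem-4}) gives a canonical equivalence $\Pre(BG) = \Fun(BG^{\op},\Gpd_\i) \simeq \Gpd_{\i/BG}$, so that ``the $\infty$-category of spaces over $BG$'' is precisely $\Pre(BG)$, and the distinguished object $\mathrm{id}_{BG}$ corresponds to the terminal presheaf, which is the image under the Yoneda embedding of the basepoint of $BG$.

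Next, I would invoke the preceding corollary with $\D = \Mod{R}$ pointed at $R$. That corollary produces a chain of equivalences
\[
\Fun_{\ast/}^{\mathrm{L}}(\Pre(BG),\Mod{R}) \simeq \Fun_{\ast/}(BG,\Mod{R}) \simeq \Fun(BG, B\Aut_{\Mod{R}}(R)),
\]
the last step using that any pointed functor out of the connected $\infty$-groupoid $BG$ factors through the full $\infty$-subgroupoid on the basepoint. By Proposition \ref{prop:Rline=BAutR}, $\Aut_{\Mod{R}}(R) \simeq \GL{R}$ as monoidal $\infty$-groupoids, and correspondingly $B\Aut_{\Mod{R}}(R) \simeq B\GL{R}$ as pointed $\infty$-groupoids, giving $\Fun(BG, B\Aut_{\Mod{R}}(R)) \simeq \Fun(BG, B\GL{R})$.

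Finally, I would note that the final equivalence of the statement — between $\Fun(BG,B\GL{R})$ and the space of monoidal maps $G \to \GL{R}$ — is just the standard loop/delooping equivalence between pointed connected $\infty$-groupoids and group-like monoidal $\infty$-groupoids, applied to $G \simeq \Omega BG$ and $\GL{R} \simeq \Omega B\GL{R}$. There is no real obstacle here; the only point that deserves care is keeping track of basepoints, since the earlier corollary is stated for pointed colimit-preserving functors and one must check that the requirement that the Yoneda image of the basepoint of $BG$ map to $R$ really does correspond under the equivalences to the condition that the resulting map $BG \to B\GL{R}$ preserves basepoints, which in turn corresponds to a \emph{monoidal} (rather than merely a map of $\infty$-groupoids) functor $G \to \GL{R}$.
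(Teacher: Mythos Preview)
Your proposal is correct and follows essentially the same approach as the paper. In fact the paper gives no formal proof of this corollary at all: it simply precedes the statement with the sentence ``consider the case in which the target $\infty$-category $\D$ is the $\infty$-category of right $R$-modules\ldots pointed by the free rank one $R$-module $R$. Then $\Aut_\D(*)\simeq\GL{R}$,'' and regards the result as immediate from the preceding corollary. You have spelled out exactly this specialization, together with the extra bookkeeping (the presheaf/slice identification, Proposition~\ref{prop:Rline=BAutR}, and the loop/delooping dictionary) that the paper leaves implicit; your closing caution about tracking basepoints is well placed, since the paper is somewhat informal on this point.
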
 

\subsection{$\i$-categorical Thom spectra, revisited}
\label{sec:i-categorical-thom}

We now return to the definition of Thom spectra from
\S\ref{sec:units-via-infty} and interpret that construction in light
of the work of the previous subsections.  To avoid confusion with the
Thom spectrum constructed in \cite{units-sma}, in this section we
write $\Mgeo$ for the Thom spectrum of \S\ref{sec:units-via-infty}.

Let $R$ be an algebra in $\Stab (\Gpd_{\i})$,
and form  the $\i$-categories $\Rmod$ and $\Rwe.$    Given a map of
$\i$-groupoids
\[
    f: X \to \Rwe,  
\]
the ``geometric'' Thom spectrum we constructed in \S\ref{sec:units-via-infty} is the push-forward of the restriction to $X$ of the
tautological $R$-line bundle $\mathrm{id}_{\Rwe}$, the identity of $\Line{R}$. More precisely, $\Mgeo f \heq \colim (f: X \to \Rwe
\xra{}\Rmod)$, and in 
particular, $\Mgeo$ preserves ($\i$-categorical) colimits.  

\begin{Proposition} \label{t-pr-yoneda-taut}
The restriction of $\Mgeo:\Gpd_{\i/\Line{R}}\!\!\to \Rmod$ along the Yoneda
embedding 
$$
\Line{R}\longrightarrow\Fun(\Line{R}^{\op},\Gpd_\i)\heq\Gpd_{\i/\Line{R}}
$$
is equivalent to the inclusion $\Line{R}\longrightarrow\Mod{R}$ of the full $\i$-subgroupoid on $R$.
\end{Proposition}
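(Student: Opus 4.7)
The plan is to unwind the definitions on both sides and reduce the claim to the fact that a colimit of an essentially constant functor over a contractible $\i$-groupoid returns the constant value.

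First, I would identify explicitly what the Yoneda embedding $\Line{R}\to\Gpd_{\i/\Line{R}}$ does on objects. By the remark following Lemma \ref{t-le-colimits-base-change}, for an object $L\in\Line{R}$ (where $\Line{R}$ is an $\i$-groupoid), the Yoneda embedding sends $L$ to the projection of the slice
\[
(\Line{R})_{/L}\longrightarrow\Line{R},
\]
i.e.\ the ``path fibration'' over $L$. By Definition of $\Mgeo$ (the Thom $R$-module functor), applying $\Mgeo$ to this object gives
\[
\Mgeo\bigl((\Line{R})_{/L}\to\Line{R}\bigr)\;\simeq\;\colim\bigl((\Line{R})_{/L}\xrightarrow{}\Line{R}\xrightarrow{i}\Mod{R}\bigr),
\]
where $i$ is the inclusion of the full $\i$-subgroupoid of $R$-lines.

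Next, I would show that this colimit is naturally equivalent to $L\in\Mod{R}$. The key observation is that $(\Line{R})_{/L}$ is a contractible $\i$-groupoid (this is exactly the argument recorded in the proof of Lemma \ref{inf-t-pr-egl-bgl-fib}: the slice of an $\i$-groupoid over an object has a terminal object, namely $\id_L\colon L\to L$, and hence is contractible). Any functor from a contractible $\i$-groupoid to any $\i$-category $\D$ is essentially constant (since $\Fun(*,\D)\simeq\D$ and $(\Line{R})_{/L}\simeq *$), with constant value determined by the image of any chosen vertex. Choosing the terminal vertex $\id_L$, the composite
\[
(\Line{R})_{/L}\to\Line{R}\xrightarrow{i}\Mod{R}
\]
is essentially constant with value $L$, and so its colimit is canonically equivalent to $L$.

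Finally, I would verify that this equivalence $\Mgeo(i_L)\simeq L$ is natural in $L$, so as to assemble into an equivalence of functors $\Line{R}\to\Mod{R}$. Since both functors are determined by their effect on objects together with their action on mapping spaces---and both restrict along $\Line{R}\to\Gpd_{\i/\Line{R}}$ to functors out of an $\i$-groupoid---naturality reduces to checking that, on a morphism $L\xrightarrow{\heq} L'$ in $\Line{R}$, the induced map on Yoneda slices $(\Line{R})_{/L}\to(\Line{R})_{/L'}$ intertwines the chosen terminal-vertex equivalences with the original morphism in $\Mod{R}$; this is formal from the definition of the Yoneda embedding. The main obstacle is making the ``essentially constant $\Rightarrow$ colimit $=$ value'' step precise at the level of simplicial sets; the cleanest way is to invoke the universal property of colimits together with contractibility of $(\Line{R})_{/L}$, rather than attempt to manipulate simplicial models of the colimit directly.
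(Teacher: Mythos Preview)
Your argument is correct, but it takes a different route from the paper. The paper's proof is a one-line invocation of the universal property of presheaves: by Lemma~\ref{t-le-yoneda-ini-colim-preserv} (i.e.\ \HTT{5.1.5.6}), restriction along the Yoneda embedding is inverse to left Kan extension, so the colimit-preserving functor $\Gpd_{\i/\Line{R}}\to\Mod{R}$ induced by the inclusion $\Line{R}\to\Mod{R}$ (which is exactly $\Mgeo$, by the colimit formula for Kan extensions) restricts along Yoneda to the inclusion itself---and this is automatically an equivalence of \emph{functors}, with no further naturality check required. Your approach instead unwinds the Yoneda embedding as a path fibration, computes the colimit pointwise using contractibility of the slice, and then argues naturality by hand. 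This is essentially reproving a special case of \HTT{5.1.5.6}. What your approach buys is concreteness: one sees directly why the Thom spectrum of a point over $L$ is $L$. What the paper's approach buys is economy and the immediate naturality, which in your version is the step most in need of care (your phrase ``formal from the definition of the Yoneda embedding'' is accurate, but it is precisely the content of the free-cocompletion statement).
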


\begin{proof}
Consider the colimit-preserving functor $\Gpd_{\i/\Line{R}}\to\Mod{R}$ induced by the canonical inclusion $\Line{R}\to\Mod{R}$.
As we explain in Corollary \ref{t-co-yoneda-inverse}, it sends
$X\to\Line{R}$ to the colimit of the composite
$X\to\Line{R}\to\Mod{R}$.
\end{proof}

Together with Corollary \ref{t-co-yoneda-inverse}, the Proposition
implies the following.

\begin{Corollary}[Proposition~\ref{t-pr-axiom-thom-intro}] \label{t-co-axiom-geom-thom-functor}
A functor $\Gpd_{\i/\Line{R}}\to\Mod{R}$ is equivalent to $\Mgeo$ if
and only if it preserves colimits and its restriction along the Yoneda
embedding
$\Line{R}\to\Fun(\Line{R}^{\op},\Gpd_\i)\heq\Gpd_{\i/\Line{R}}$ is
equivalent to the inclusion of
$\Line{R}$ into $\Mod{R}$. 
\end{Corollary}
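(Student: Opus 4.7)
The plan is to deduce the corollary directly from the universal property of the free cocompletion, as embodied in Lemma \ref{t-le-yoneda-ini-colim-preserv} and Corollary \ref{t-co-yoneda-inverse}, specialized to the $\i$-groupoid $\C = \Line{R}$. Since $\Line{R}$ is an $\i$-groupoid, we have the identification $\Pre(\Line{R}) = \Fun(\Line{R}^{\op},\Gpd_{\i}) \heq \Gpd_{\i/\Line{R}}$ of \eqref{eq:8}, so Lemma \ref{t-le-yoneda-ini-colim-preserv} tells us that restriction along the Yoneda embedding $i\colon \Line{R} \to \Gpd_{\i/\Line{R}}$ gives an equivalence
\[
\Fun^{\mathrm{L}}(\Gpd_{\i/\Line{R}},\Mod{R}) \xra{\heq} \Fun(\Line{R},\Mod{R}).
\]
Under this equivalence, the functor $\Mgeo$ corresponds to some specific functor $\Line{R} \to \Mod{R}$, and the content of the corollary is precisely that this functor is the canonical inclusion.

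For the ``only if'' direction, I would first observe that $\Mgeo$ is colimit-preserving: by construction it is the left Kan extension functor $p_{!}$ along $p\colon X^{\op} \to \ptspace$ composed with postcomposition by the inclusion $\Line{R} \to \Mod{R}$, and passage to colimits (equivalently, the left adjoint $p_{!}$) commutes with colimits in the bundle variable. Proposition \ref{t-pr-yoneda-taut} then identifies the restriction of $\Mgeo$ along $i$ with the inclusion $\Line{R} \hookrightarrow \Mod{R}$, giving the desired properties.

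For the ``if'' direction, suppose $F\colon \Gpd_{\i/\Line{R}} \to \Mod{R}$ is a colimit-preserving functor whose restriction along $i$ is equivalent to the inclusion $j\colon \Line{R} \to \Mod{R}$. Applying Corollary \ref{t-co-yoneda-inverse} with $f = j$ shows $F \heq \tilde{j}$, where $\tilde{j}$ is the colimit extension of $j$. But Proposition \ref{t-pr-yoneda-taut} likewise identifies $\Mgeo$ with $\tilde{j}$ (via the same application of Corollary \ref{t-co-yoneda-inverse}), so $F \heq \Mgeo$.

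There is no real obstacle here: the work has already been done in establishing Proposition \ref{t-pr-yoneda-taut} and Corollary \ref{t-co-yoneda-inverse}, and the only subtlety to be careful about is that the two conditions in the statement (preservation of colimits and the specified restriction) are exactly what is needed to invoke the equivalence \eqref{eq:74} in the direction that pins down a functor by its restriction to the representables.
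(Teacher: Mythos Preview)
Your proposal is correct and follows essentially the same approach as the paper: the paper simply states that the corollary follows from Proposition~\ref{t-pr-yoneda-taut} together with Corollary~\ref{t-co-yoneda-inverse}, which is exactly the argument you have spelled out in detail.
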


\subsection{A review of the algebraic Thom spectrum functor }
\label{sec:revi-algebr-thom}

We briefly recall the 
``algebraic'' construction  of the Thom spectrum from \cite{units-sma}.
For an $\ainfty$ ring spectrum $R$, the classical construction yields $\GL{R}$ as
an $\ainfty$ space.  This means we expect to be able to form
constructions $B \GL{R}$ and $E \GL{R}$, and so given a classifying
map $f \colon X \to B \GL{R}$ obtain a $\GL{R}$-space $P$ as the
pullback of the diagram 
\[
\xymatrix{
X \ar[r] & B\GL{R} & \ar[l] E\GL{R}. \\
}
\]
We then define the Thom spectrum associated to $f$ as the derived
smash product 
\begin{equation}\label{eq:9}
\Malg f\eqdef \splus P \sma^{L}_{\splus \GL{R}} R,
\end{equation}
where $R$ is the $\splus \GL{R}$-$R$ bimodule specified by the
canonical action of $\splus \GL{R}$ on $R$. 

In order to make this outline precise, the companion paper used the
technology of $*$-modules \cite{Blumberg-thesis,
  Blumberg-Cohen-Schlichtkrull}, which are a symmetric monoidal model for
the category of spaces such that monoids are precisely $A_\infty$
spaces and commutative monoids are precisely $E_\infty$ spaces.
Denote the category of $*$-modules by $\aM_*$.  As an $A_\infty$ (or
$E_\infty$) space, $\GL{R}$ gives rise to a monoid in the the category
of $*$-modules.  We will abusively continue to use the notation
$\GL{R}$ to denote a model of $\GL{R}$ which is cofibrant as a monoid
in $*$-modules.  We can compute $B\GL{R}$ and $E\GL{R}$ as two-sided
bar constructions with respect to the symmetric monoidal product
$\boxtimes$:
\[
E_{\boxtimes}\GL{R} = B_\boxtimes(*,\GL{R},\GL{R}) \qquad \textrm{and}\qquad
B_{\boxtimes}\GL{R} = B_\boxtimes(*,\GL{R},*).
\]
The map $E_{\boxtimes}\GL{R} \to B_{\boxtimes}\GL{R}$ models the universal
quasifibration~\cite[3.8]{units-sma}.  Furthermore, there 
is a homotopically well-behaved category $\aM_{\GL{R}}$ of
$\GL{R}$-modules in $\aM_*$~\cite[3.6]{units-sma}.

Now, given a fibration of $*$-modules $f \colon X \to B_{\boxtimes}\GL{R}$, we take the pullback of the diagram
\[
\xymatrix{
X \ar[r] & B_{\boxtimes}\GL{R} & \ar[l] E_{\boxtimes}\GL{R} \\
}
\]
to obtain a $\GL{R}$-module $P$.  This procedure defines a functor
from $*$-modules over $B_{\boxtimes}\GL{R}$ to $\GL{R}$-modules; since
we are assuming $f$ is a fibration, we are computing the derived
functor.  Applying $\Sigma^{\infty}_{\bL+}$, we obtain a right
$\Sigma^{\infty}_{\bL+} \GL{R}$-module $\Sigma^{\infty}_{\bL+} P$, and
so we can define $\Malg f$ as above.  (Here $\Sigma^{\infty}_{\bL+}$
is the appropriate model of $\splus$ in this setting.)

The functor which sends $f$ to $P$ induces an equivalence of
$\i$-categories
\[
    \N(({\EKMM_{*/B_{\boxtimes}{\GL{R}}}})^{\cf}) \heq
    \N(({\EKMM_{\GL{R}}})^{\cf}), 
\]
as a consequence of~\cite[3.19]{units-sma}.  Together with Proposition
\ref{prop:watts}, this gives a characterization of the algebraic Thom
spectrum functor.

\begin{Proposition}\label{abstract-alg-thom}
Let
\[
T\colon \aM_{\GL{R}} \to \aM_{R}
\]
be a continuous, colimit-preserving
functor which sends $\GL{R}$ to an $R$-module $R'$ homotopy equivalent
to $R$ in such a way that 
\[
\GL{R}\simeq\mathrm{End}_{\aM_{\GL{R}}}(\GL{R})\longrightarrow\mathrm{End}_{\aM_{R}}(R')\simeq\mathrm{End}_{\aM_{R}}(R) 
\]
is homotopy equivalent to the inclusion
$\GL{R}\simeq\mathrm{Aut}(R)\to\mathrm{End}(R)$.
(Here $\Aut$ and $\End$ refer to the derived automorphism and
endomorphism spaces respectively.)
Then $T^\mathbb{L}$, the left-derived functor of $T$, is homotopy
equivalent to
\[
\Sigma^{\infty}_{\bL+} (-)\land^\mathbb{L}_{\Sigma^\infty_{\bL+}
  \GL{R}} R \colon \aM_{\GL{R}}\to \aM_R.
\]    
\end{Proposition}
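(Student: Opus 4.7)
The plan is to mimic the proof strategy of Proposition~\ref{prop:watts}, adjusted for the fact that the source of $T$ lives in the world of $*$-modules while the target and the candidate functor involve module spectra. Define the candidate functor $F\colon \aM_{\GL{R}}\to \aM_R$ by $F(M)\eqdef \Sigma^{\infty}_{\bL+}M \sma^{\mathrm{L}}_{\Sigma^{\infty}_{\bL+}\GL{R}} R$, with $R$ carrying the standard $\Sigma^{\infty}_{\bL+}\GL{R}$-$R$-bimodule structure induced by the canonical inclusion $\GL{R}\simeq\mathrm{Aut}(R)\to\mathrm{End}(R)$. As a composite of continuous, homotopy-colimit-preserving functors, $F$ inherits those properties, and by direct computation $F(\GL{R})\simeq R$ with the standard bimodule structure.

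The second step is to construct a natural comparison map $\eta\colon F\Rightarrow T^{\mathrm{L}}$. As in the proof of Proposition~\ref{prop:watts}, the enrichment of $T$ provides, for each cofibrant $M\in\aM_{\GL{R}}$, a canonical continuous assembly map $M\sma_{\GL{R}} T(\GL{R})\to T(M)$ of right $R$-modules. Passing to spectra by applying $\Sigma^{\infty}_{\bL+}$ and then using the hypothesized equivalence $T(\GL{R})\simeq R$ --- which the hypothesis on the $\GL{R}$-action guarantees is compatible with the bimodule structures --- yields the desired natural map $\eta(M)\colon F(M)\to T^{\mathrm{L}}(M)$.

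By construction $\eta(\GL{R})$ is an equivalence (both sides compute to $R$ and the identification is the given one), and both $F$ and $T^{\mathrm{L}}$ preserve homotopy colimits. Since every cofibrant $\GL{R}$-module is built from copies of $\GL{R}$ (possibly tensored with spaces) by iterated homotopy colimits, an induction identical to the one concluding the proof of Proposition~\ref{prop:watts} shows that $\eta$ is an equivalence on all cofibrant objects, proving that $F$ and $T^{\mathrm{L}}$ are naturally isomorphic as derived functors.

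The main technical obstacle is step two: verifying that the assembly map descends to the smash product over $\Sigma^{\infty}_{\bL+}\GL{R}$ in a manner compatible with the right $R$-module structure, so that $\eta$ is an honest natural transformation of bimodule-valued functors. This is exactly where the hypothesis that $\GL{R}\simeq\mathrm{Aut}(R)\to\mathrm{End}(R)$ agrees with the standard inclusion enters the argument; once that identification is pinned down, the remainder of the proof is a formal consequence of the fact that $\aM_{\GL{R}}$ is generated under homotopy colimits by the free $\GL{R}$-module of rank one.
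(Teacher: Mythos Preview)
Your argument is correct in outline, but it takes a different route from the paper. The paper applies Proposition~\ref{prop:watts} as a black box: since $\Mod{R}$ is stable, $T$ factors through $\Mod{\Sigma^\infty_{\bL+}\GL{R}}$, and then Eilenberg--Watts says $T^{\mathrm{L}}$ is tensoring with the bimodule $B = T(\GL{R})$. The hypothesis that $T$ sends $\GL{R}$ to $R'\simeq R$ with the standard $\GL{R}$-action then identifies $B\simeq R$ as $(\Sigma^\infty_{\bL+}\GL{R},R)$-bimodules, and the proof is over in three lines. You instead rerun the \emph{proof} of Proposition~\ref{prop:watts} in this mixed setting: build an explicit assembly transformation $\eta\colon F\Rightarrow T^{\mathrm{L}}$, check it on the generator, and propagate by colimits. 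This is perfectly valid and arguably more transparent, since it exhibits the natural transformation directly; the paper's approach is shorter because the work has already been packaged into Proposition~\ref{prop:watts}.

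One point to tighten: your assembly map ``$M\sma_{\GL{R}} T(\GL{R})\to T(M)$'' is written as though both sides live in the same enrichment, but $M$ is a $\GL{R}$-module in $*$-modules while $T(\GL{R})$ is an $R$-module spectrum. What you are really using is that $\aM_R$ is tensored over $\aM_*$, so the balanced product makes sense; equivalently, one first passes through $\Sigma^\infty_{\bL+}$ to land in $\Mod{\Sigma^\infty_{\bL+}\GL{R}}$ and then runs the spectral assembly of Proposition~\ref{prop:watts}. The paper sidesteps this by invoking stability of the target up front; in your direct approach you should make this passage explicit before writing down the map.
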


\begin{proof}
The stability of $\Mod{R}$ and Proposition~\ref{prop:watts} together
imply that $T^\mathbb{L}$ is homotopy equivalent to
$\Sigma^\i_{\bL+}(-)\land^\mathbb{L}_{\Sigma^\i_{\bL+}\GL{R}} B$ for some
$(\Sigma^\i_{\bL+}\GL{R},R)$-bimodule $B$. 
Since $T(\Sigma^\i_{\bL+}\GL{R})\simeq R$, we must have $B\simeq R$; since
the left action of $\GL{R}$ on itself induces (via the equivalence
$R'\simeq R$) the canonical action of $\Sigma^{\infty}_{\bL+} \GL{R}$
on $R$, we conclude that $B\simeq R$ as $(\Sigma^\i_{\bL+}\GL{R},R)$-bimodules. 
\end{proof}

\subsection{Comparing notions of Thom spectrum}
\label{sec:comparison}

In this section, we show that, on underlying $\i$-categories, the
algebraic Thom $R$-module functor is equivalent to the
geometric Thom spectrum functor via the characterization of
Corollary~\ref{t-co-axiom-geom-thom-functor}.  

Let $\EKMM_{S}$ be the category of EKMM $S$-modules \cite{EKMM}.  According to the 
discussion in~\cite[\S 1.4.3]{HA} (and using the comparisons
of~\cite{MR1806878}), there is an equivalence of $\i$-categories 
\begin{equation} \label{eq:65}
     \N \EKMM_{S}^{\cf} \heq \Stab (\Gpd_{\i})
\end{equation}
which induces equivalences of $\i$-categories of algebras and
commutative algebras
\begin{equation} \label{eq:64}
     \N\Alg (\EKMM_{S})^{\cf} \heq \Alg (\Stab (\Gpd_{\i})) \qquad 
     \N\CAlg(\EKMM_{S})^{\cf} \heq \CAlg (\Stab (\Gpd_{\i})).
\end{equation}
Let $R$ be a cofibrant-fibrant EKMM $S$-algebra, and let $R'$ be
the corresponding algebra in $\Alg (\Stab (\Gpd_{\i}))$.  The
equivalence \eqref{eq:65} induces an equivalence of $\i$-categories
\begin{equation} \label{eq:66}
   \N(\aM_{R}^{\cf}) \heq \Mod{R'}.
\end{equation}
Proposition \ref{cor:gl-kan-fib} gives an equivalence of
$\i$-groupoids
\begin{equation}\label{eq:67}
   \Sing{B\GL{R}} \heq     \N ({(\Line{R})}^{\cf})
\end{equation}
and so putting \eqref{eq:66} and \eqref{eq:67} together with the
comparisons of~\cite[3.7]{units-sma} we have equivalences of
$\i$-categories 
\[
     \N((\aM_{*/\B_{\boxtimes}\GL{R}})^{\cf}) \heq \N
     ({(\Top_{/B\GL{R}})}^{\cf}) \heq \Gpd_{\i/\Line{R'}}. 
\]

\begin{Proposition} \label{t-pr-thom-spectra-equivalent}
The functor 
\[
     \Gpd_{\i/\Mod{R'}}\heq \N({(\Top_{/B\GL{R}})}^{\cf}) \xra{\N \Malg}
                             \N(\aM_{R}^{\cf}) \heq \Mod{R'},
\]
obtained by passing the Thom $R$-module functor $\Malg$ of
\cite{units-sma} though the indicated equivalences, is
equivalent to the Thom $R'$-module functor of
\S\ref{sec:units-via-infty}.
\end{Proposition}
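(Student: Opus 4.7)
The plan is to invoke the universal characterization of the geometric Thom spectrum functor $\Mgeo$ provided by Corollary \ref{t-co-axiom-geom-thom-functor}: a colimit-preserving functor $\Gpd_{\i/\Line{R'}}\to\Mod{R'}$ is equivalent to $\Mgeo$ precisely when its restriction along the Yoneda embedding $\Line{R'}\to\Gpd_{\i/\Line{R'}}$ is equivalent to the inclusion $\Line{R'}\hookrightarrow\Mod{R'}$. Thus it suffices to verify that the functor obtained by transporting $\Malg$ through the equivalences $\N((\aM_{*/B_\boxtimes\GL{R}})^{\cf})\heq\Gpd_{\i/\Line{R'}}$ and $\N(\aM_R^{\cf})\heq\Mod{R'}$ satisfies these two properties.

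First I would verify colimit preservation: at the point-set level, $\Malg$ factors as the pullback $\aM_{*/B_\boxtimes\GL{R}}\to\aM_{\GL{R}}$ along $E_\boxtimes\GL{R}\to B_\boxtimes\GL{R}$, followed by the left Quillen functor $\Sigma^\infty_{\bL+}(-)\sma^L_{\Sigma^\infty_{\bL+}\GL{R}}R\colon\aM_{\GL{R}}\to\aM_R$. The first map induces an equivalence of underlying $\i$-categories by \cite[3.19]{units-sma}, and the second, being a left Quillen functor, induces a left adjoint on $\i$-categories. In particular, the composite preserves all small colimits. Next, I would compute the restriction along the Yoneda embedding. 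The Yoneda embedding sends the essentially unique object $R'\in\Line{R'}$ to the path fibration $\Line{R'}_{/R'}\to\Line{R'}$, which under the equivalence with $\N((\aM_{*/B_\boxtimes\GL{R}})^{\cf})$ corresponds to the universal principal quasifibration $E_\boxtimes\GL{R}\to B_\boxtimes\GL{R}$ (this follows from the observation in Section \ref{sec:bundles-i-groupoids} about path fibrations as images of the Yoneda embedding, combined with \cite[3.8]{units-sma}). Pulling back along this map gives $\GL{R}$ as a free right $\GL{R}$-module; smashing over $\Sigma^\infty_{\bL+}\GL{R}$ with $R$ gives $R$ itself, which is the distinguished object of $\Line{R'}\subset\Mod{R'}$.

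The delicate step, and the main obstacle, is upgrading this identification on objects to an equivalence of pointed functors $\Line{R'}\to\Mod{R'}$; equivalently, verifying that the induced map on automorphism spaces $\GL{R}\simeq\Aut_{\Line{R'}}(R')\to\Aut_{\Mod{R'}}(R')\simeq\GL{R'}$ is equivalent to the canonical identification. Here one must trace the canonical $\GL{R}$-action on $E_\boxtimes\GL{R}$ through the pullback (yielding the regular action on $\GL{R}$) and the derived smash product (yielding the canonical action on $R$). The cleanest way to handle this is via Corollary \ref{t-co-e-w-i-sp-ov-BG}: both $\Mgeo$ and $\N\Malg$ are pointed colimit-preserving functors out of $\Gpd_{\i/B\GL{R'}}\heq\Gpd_{\i/\Line{R'}}$ landing in $\Mod{R'}$, hence are classified by pointed maps $B\GL{R'}\to B\Aut_{\Mod{R'}}(R')\heq B\GL{R'}$; the work above shows both correspond to the identity up to equivalence. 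Alternatively, one may cite Proposition \ref{abstract-alg-thom}, which already characterizes $\Malg$ by exactly the property that $\GL{R}$ acts canonically on $R$, so no further verification of the action is required. Either route concludes the equivalence $\N\Malg\heq\Mgeo$.
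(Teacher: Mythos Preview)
Your proposal is correct and follows essentially the same approach as the paper: both arguments reduce to verifying the hypotheses of Corollary~\ref{t-co-axiom-geom-thom-functor}, and both invoke Proposition~\ref{abstract-alg-thom} to identify the restriction along the Yoneda embedding with the canonical inclusion $\Line{R'}\hookrightarrow\Mod{R'}$. The paper packages the Yoneda computation slightly differently, introducing the one-object topological category $\C$ with $\map_\C(*,*)=\GL{R}$ as a common full subcategory of $\aM_{\GL{R}}$ and $\aM_R$, whereas you unwind the Yoneda embedding explicitly via the path fibration and the universal quasifibration; but this is a cosmetic difference, and your alternative route via Corollary~\ref{t-co-e-w-i-sp-ov-BG} is also valid.
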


\begin{proof}
Let $\C$ denote the topological category with a single object $*$ and
\[
\map_\C(*,*)=\GL{R}=\Aut_R(R^\cf)\simeq\Aut_{R'}(R').
\]
Note that $\C$ is naturally a topological subcategory of
$\aM_{\GL{R}}$ (the full topological subcategory on $\GL{R}$) and by
definition a topological subcategory of $\aM_{R}$.  Note also that 
\[
   \N\C \simeq B\Aut (R')\simeq \Line{R'}.
\]
As in Proposition~\ref{abstract-alg-thom}, the continuous functor
$$
T^\mathrm{L}\colon \aM_{\GL{R}}\longrightarrow \aM_{R}
$$
determined by $\Malg$ 
has the property that its restriction to $\C$ is equivalent to the
inclusion of the topological subcategory $\C\to\aM_{R}$.  Taking
simplicial nerves, and recalling that 
\[
     \N (\aM_{\GL{R}}^{\cf}) \heq  \N ((\Top_{/B\GL{R}})^{\cf}) \heq
      \Fun
     (\N\C^{\op},\Gpd_{\i}),
\]
we see that 
\[
\N(T^\mathrm{L})\colon
\Fun(\N\C^{\op},\Gpd_\i)\simeq\N(\aM_{\GL{R}}^\cf)\longrightarrow\N(\aM_{R}^\cf)\simeq\Mod{R'} 
\]
is a colimit-preserving functor whose restriction along the Yoneda
embedding
\[
\N\C\to\Fun(\N\C^{\op},\Gpd_\i)\simeq\Gpd_{\i/\Line{R'}}
\]
is
equivalent to the inclusion of the $\i$-subcategory 
$\N\C\simeq\Line{R'}\to\Mod{R'}$. It follows from Corollary \ref{t-co-axiom-geom-thom-functor} that $\N (T^L)$ is equivalent to the ``geometric'' Thom spectrum functor of \S\ref{sec:units-via-infty}.
\end{proof}

\begin{Remark}
The argument also implies the following apparently more general result.
Recall from \S\ref{sec:colim-pres-funct} that any map $k\colon B\GL{R}
\to B\GL{R}$ defines a functor from the $\i$-category of spaces over
$B\GL{R}$ to the $\i$-category of $R$-modules, defined by sending $f\colon
X \to B\GL{R}$ to the colimit of the composite 
\begin{equation}\label{eq:co-param-mor-eq}
X^{\op} \xra{f} B\GL{R} \xra{k} B\GL{R} \to \Rmod.
\end{equation}
On the other hand, according to
Proposition~\ref{prop:algebraic-colimit} below, we can describe the
derived smash product from section \ref{sec:morita} associated to $k$
as the colimit of the composite 
\[
X^{\op} \xra{f} B\GL{R} \xra{k} B\GL{R} \xra{\splus} \Mod{\splus \GL{R}}
\xra{(-) \sma_{\splus \GL{R}} R} \Rmod.
\]
Since both functors are given by the formula $M(k\circ f)$, the Thom
$R$-module of $f$ composed with $k$, we conclude that these two
procedures are equivalent for any $k$, not just the identity. 
\end{Remark}

\subsection{The ``neo-classical'' Thom spectrum functor}
\label{sec:neo-classical-thom}

In this section we compare the Lewis-May operadic Thom
spectrum functor to the Thom spectrum functors discussed in this
paper.  Since the May-Sigurdsson construction of the Thom spectrum in
terms of a parametrized universal spectrum over $B\GL{S}$
\cite{MR2271789}[23.7.4] is easily seen to be equivalent to the
space-level Lewis-May description, this will imply that all of the
known descriptions of the Thom spectrum functor agree up to homotopy.
Our comparison proceeds by relating the Lewis-May model to the
quotient description of Proposition~\ref{quotient}.

We begin by briefly reviewing the Lewis-May construction of the Thom
spectrum functor; the interested reader is referred to Lewis' thesis,
published as Chapter IX of \cite{LMS:esht}, and the excellent
discussion in Chapter 22 of \cite{MR2271789} for more details and
proofs of the foundational results below.  Nonetheless, we have tried
to make our discussion relatively self-contained.

The starting point for the Lewis-May construction is an explicit
construction of $\GL{S}$ in terms of a diagrammatic model of infinite
loop spaces.  Let $\sI_c$ be the symmetric monoidal category of finite
or countably infinite dimensional real inner product spaces and linear
isometries.  Define an $\sI_c$-space to be a continuous functor from
$\sI_c$ to spaces.  The usual left Kan extension construction (i.e., Day convolution) gives
the diagram category of $\sI_c$-spaces a symmetric monoidal
structure.  It turns out that monoids and commutative monoids for this
category model, respectively, $\ainfty$ and $\einfty$ spaces; for technical felicity,
we focus attention on the commutative monoids which satisfy two
additional properties:
\begin{enumerate}
\item The map $T(V) \to T(W)$ associated to a linear isometry $V \to
  W$ is a homeomorphism onto a closed subspace.
\item Each $T(W)$ is the colimit of the $T(V)$, where $V$ runs over
  the finite dimensional subspaces of $W$ and the maps in the colimit
  system are restricted to the inclusions.
\end{enumerate}
Denote such a functor as an $\sI_c$-FCP (functor with cartesian
product) \cite[23.6.1]{MR2271789}; the requirement that $T$ be a
diagrammatic commutative monoid implies the existence of a ``Whitney
sum'' natural transformation $T(U) \times T(V) \to T(U \oplus V)$.
This terminology is of course deliberately evocative of the notion of
$FSP$ (functor with smash product), which is essentially an orthogonal
ring spectrum \cite{MR1806878}.

An $\sI_c$-FCP gives rise to an $\einfty$ space structured by the
linear isometries operad $\sL$; specifically, $T(\R_\infty) = \colim_V T(V)$
is an $\sL$-space with the operad maps induced by the Whitney sum
\cite[1.9]{MQRT:ersers}, \cite[23.6.3]{MR2271789}.  In fact, as
alluded to above one can set up a Quillen equivalence between the
category of $\sI_c$-FCP's and the category of $\einfty$ spaces,
although we do not discuss this matter herein (see \cite{Lind} for a
nice treatment of this comparison).

Moving on, we now focus attention on the $\sI_c$-FCP specified by
taking $V \subset \R^\infty$ to the space of based homotopy
self-equivalences of $S^V$; this is classically denoted by $F(V)$.
Passing to the colimit over inclusions, $F(\R^\infty) = \colim_V F(V)$
becomes a $\sL$-space which models $\GL{S}$ --- this is essentially one
of the original descriptions from \cite{MQRT:ersers}.  Furthermore,
since each $F(V)$ is a monoid, applying the two-side bar construction
levelwise yields an FCP specified by $V \mapsto BF(V)$; here $BF(V)$
denotes the bar construction $B(*,F(V),*)$, and the Whitney sum
transformation is defined using the homeomorphism $B(*, F(V), *)
\times B(*, F(W), *) \cong B(*, F(V) \times F(W), *)$.  The colimit
$BF(\R^\infty)$ provides a model for $B\GL{S}$.

Now, since $F(V)$ acts on $S^V$, we can also form the two-sided bar
construction $B(*,F(V),S^V)$, abbreviated $EF(V)$, and there is a
universal quasifibration
\[
\pi_V \colon EF(V) = B(*,F(V),S^V) \longrightarrow B(*,F(V),*) = BF(V)
\]
which classifies
spherical fibrations with fiber $S^V$.  Given a map $X \to
BF(\R^\infty)$, by pulling back subspaces $BF(V) \subset
BF(\R^\infty)$ we get an induced filtration on $X$; denote the space
corresponding to pulling back along the inclusion of $V \in \R^\infty$
by $X(V)$ \cite[IX.3.1]{LMS:esht}.

Denote by $Z(V)$ the pullback
\[
\xymatrix{
X(V) \ar[r] & BF(V) & \ar[l] EF(V).
}
\]
The $V$th space of the Thom prespectrum is then obtained by taking the
Thom space of $Z(V) \to X(V)$, that is by collapsing out the section
induced from the base point inclusion $* \to S^V$; denote the resulting
prespectrum by $TF$ (see \cite[IX.3.2]{LMS:esht}, and note that some work is
involved in checking that these spaces in fact assemble into a
prespectrum).

Next, we will verify that the prespectrum $TF$ associated to the
identity map on $BF(\R^\infty)$ is stably equivalent to the homotopy
quotient $S/\GL{S} \heq S/F(\R^\infty)$.  For a point-set description of
this homotopy quotient, it follows from~\cite[3.9]{units-sma}
that the category of EKMM (commutative) $S$-algebras is tensored over
(commutative) monoids in $*$-modules: the tensor of a monoid in
$*$-modules $M$ and an $S$-algebra $A$ is $\Sigma^{\infty}_{\L+} M \sma A$, with
multiplication 
\begin{align*}
(\Sigma^\infty_{\L+} M \sma A) \sma (\Sigma^\infty_{\L+} M \sma A) \cong &(\Sigma^\infty_{\L+} M \sma \Sigma^\infty_{\L+}
M) \sma (A \sma A) \cong \\
&(\Sigma^\infty_{\L+} (M \boxtimes M)) \sma (A \sma A) \to
(\Sigma^\infty_{\L+} M) \sma A. 
\end{align*}

Thus, we can model the homotopy quotient as a bar construction in the
category of (commutative) $S$-algebras.  However, we can also describe
the homotopy quotient as $\colim_V S/F(V)$, where here we use the
structure of $F(V)$ as a monoid acting on $S^V$.  It is this
``space-level'' description we will employ in the comparison below.

We find it most convenient to reinterpret the Lewis-May construction
in this situation, as follows:  The Thom space in this case is by
definition the cofiber $(EF(V),BF(V))$ of the inclusion $BF(V) \to
EF(V)$ induced from the base point inclusion $* \to S^V$.
Now, 
$$
BF(V)\heq */F(V)$$
and similarly
$$
EF(V)\heq S^V/F(V).
$$
Hence the Thom space is likewise the cofiber $(S^V,*)/F(V)$ of the
inclusion $*\to S^V$, viewed as a {\em pointed} space.

More generally, we can regard the prespectrum $\{MF(V)\}$ as
equivalently described as
\[
MF(V)\eqdef S^V/F(V),
\]
the homotopy quotient of the {\em pointed} space $S^V$ by $F(V)$ via
the canonical action, with structure maps induced from the quotient
maps $S^V\to S^V/F(V)$ together with the pairings 
\begin{align*}
MF(V)\land MF(W)\heq \, &S^V/F(V)\land S^W/F(W)\longrightarrow\\
&\,\, S^{V\oplus W}/F(V)\times F(W)\longrightarrow S^{V\oplus W}/F(V \oplus
W), 
\end{align*}
where $F(V)\times F(W)\to F(V \oplus W)$ is the Whitney sum map of
$F$.  It is straightforward to check that the structure maps in terms
of the bar construction described in \cite[IX.3.2]{LMS:esht} realize
these structure maps.

The associated spectrum $MF$ is then the colimit $\colim_V
S/F(V) \heq S/F(\R^\infty)$.  A key point is that the Thom spectrum
functor can be described as the colimit over shifts of the Thom spaces
\cite[IX.3.7,IX.4.4]{LMS:esht}:
\[
MF = \colim_V \Sigma^{-V} \Sigma^\i MF(V).
\]
Furthermore, using the bar construction we can see that the spectrum
quotient $(\Sigma^V S) / F(V)$ is equivalent to $\Sigma^\i S^V /
F(V)$.  Putting these facts together, we have the following chain of
equivalences: 
\begin{eqnarray*}
MF & = & \colim_V\Sigma^{-V}\Sigma^\i
MF(V) = \colim_V\Sigma^{-V}\Sigma^\i S^V /F(V) \\ 
& \heq & \colim_V
\Sigma^{-V} (\Sigma^V S)/F(V) \heq \colim_V (\Sigma^{-V} \Sigma^V
S)/F(V) \htp S/F(\R^\infty).
\end{eqnarray*}

More generally, a slight elaboration of this argument implies the
following:

\begin{Proposition}\label{L-Mquotient}
The Lewis-May Thom spectrum $MG$ associated to a group-like $A_\i$ map
$\varphi:G\to GL_1 S$ modeled by the map of $\sI_c$-FCPs $G \to F$ is
equivalent to the spectrum $S/G$, the homotopy quotient of the sphere
by the action of $\varphi$.
\end{Proposition}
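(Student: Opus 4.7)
The plan is to mimic, mutatis mutandis, the chain of equivalences already carried out for the universal example $MF \simeq S/F(\R^\infty)$ that was established just before the statement, replacing $F$ by $G$ throughout and using the map of $\sI_c$-FCPs $G \to F$ to obtain the relevant actions. Specifically, composing with the Whitney sum map of $F$ and the action of $F(V)$ on $S^V$, the map $G(V) \to F(V)$ turns each $S^V$ into a based $G(V)$-space, and the Thom prespectrum associated to the classifying map $BG \to BF(\R^\infty)$ has $V$th space
\[
MG(V) \eqdef B(\ptspace, G(V), S^V) \htp S^V / G(V),
\]
with structure maps induced by the Whitney sum pairings $G(V) \times G(W) \to G(V \oplus W)$ in exactly the manner described for $F$.

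Next, I would invoke the Lewis--May spectrification formula $MG = \colim_V \Sigma^{-V} \Sigma^\infty MG(V)$ and run the same four-step chain as for $F$:
\begin{align*}
MG &= \colim_V \Sigma^{-V} \Sigma^\infty MG(V) = \colim_V \Sigma^{-V} \Sigma^\infty (S^V / G(V)) \\
&\htp \colim_V \Sigma^{-V} (\Sigma^V S)/G(V) \htp \colim_V (\Sigma^{-V} \Sigma^V S)/G(V) \htp S / G(\R^\infty).
\end{align*}
The first replacement uses the identification of $\Sigma^\infty$ of a bar construction quotient of a based $G(V)$-space with the spectrum-level homotopy quotient, which in turn is the same bar construction formed in $S$-modules (using the $*$-module tensoring recalled in the previous subsection). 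The desuspension step commutes with the quotient because $G(V)$ acts trivially on the $\Sigma^{-V}$ factor, and the final equivalence uses that homotopy quotients commute with filtered colimits of acting monoids and their actions.

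Finally, I would identify the target with $S/G$. By definition of an $\sI_c$-FCP, $G(\R^\infty) = \colim_V G(V)$ as $\sL$-spaces, and this $\sL$-space is precisely the $A_\infty$ model for $G$; moreover, the induced action on $S \htp \colim_V \Sigma^{-V}\Sigma^V S$ is the one classified by $\varphi\colon G \to GL_1 S$, since the pairings on the $MG(V)$ were constructed from the Whitney sum of $G$ and the given $G(V) \to F(V)$. Hence $S/G(\R^\infty) \htp S/G$ as spectra with a canonical $G$-action, completing the proof.

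The main obstacle I anticipate is the bookkeeping in step (2), namely checking carefully that the spectrum-level homotopy quotient $(\Sigma^V S)/G(V)$ really agrees with $\Sigma^\infty(S^V/G(V))$ compatibly with the structure maps, so that the desuspension and colimit can be interchanged to yield an equivalence $MG \htp S/G$ rather than merely a levelwise equivalence of prespectra; this amounts to verifying that the Lewis--May spectrification is compatible with the bar-construction model for $S/G$ in EKMM $S$-modules, which is precisely the content of the parenthetical remark that \emph{``it is straightforward to check that the structure maps in terms of the bar construction described in \cite[IX.3.2]{LMS:esht} realize these structure maps.''}
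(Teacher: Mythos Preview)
Your proposal is correct and is precisely the approach the paper takes: the paper offers no separate proof but simply asserts that ``a slight elaboration of this argument'' (the chain of equivalences for $MF$) yields the proposition, and your write-up supplies exactly that elaboration. The only point worth noting is that identifying $MG(V)$ with $B(\ast,G(V),S^V)$ requires recognizing the Lewis--May pullback $Z(V)$ along $BG(V)\to BF(V)\leftarrow EF(V)$ as the bar construction $B(\ast,G(V),S^V)$, which is a standard change-of-groups identification and is implicit in the paper's phrasing.
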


Note that any $A_\i$ map $X \to F(\R^{\infty})$ can be rectified to a map of
$\sI_c$-FCPs $X' \to F$ \cite{Lind}.

\begin{Corollary}\label{t-co-LM-thom-comp}
Given a map of spaces $f\colon X\to BGL_1 S$, write $M_\mathrm{LM}f$
for the spectrum associated to the Lewis-May Thom spectrum of $f$.
Then $M_\mathrm{LM}f\heq \Mgeo f$ as objects of the $\i$-category of
spectra.
\end{Corollary}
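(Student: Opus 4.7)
The plan is to invoke the axiomatic characterization of Corollary~\ref{t-co-axiom-geom-thom-functor}, which identifies $\Mgeo$ as the unique (up to contractible choice) colimit-preserving functor $\Gpd_{\i/\Line{S}} \to \Mod{S}$ whose restriction along the Yoneda embedding $\Line{S} \to \Gpd_{\i/\Line{S}}$ is the canonical inclusion $\Line{S} \to \Mod{S}$. Following the strategy of Proposition~\ref{t-pr-thom-spectra-equivalent}, it then suffices to exhibit the Lewis--May construction, after passage to $\infty$-categories, as such a functor.

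First I would upgrade the point-set Lewis--May construction $M_\mathrm{LM}$ to a functor of $\infty$-categories $\N(M_\mathrm{LM}) \colon \Gpd_{\i/\Line{S}} \to \Mod{S}$ by applying the simplicial nerve to cofibrant-fibrant objects; this is legitimate because the bar constructions involved preserve levelwise weak equivalences, and the comparison of topological spaces over $B\GL{S}$ with $\Gpd_{\i/\Line{S}}$ was already established in Section~\ref{sec:comparison}. To verify the Yoneda restriction condition, one checks that $\N(M_\mathrm{LM})$ sends the representable at a point of $\Line{S} \simeq B\GL{S}$, modeled by the contractible replacement $E\GL{S} \to B\GL{S}$, to $S$ with the canonical $\GL{S}$-action: pulling back the universal quasifibration $EF(V) \to BF(V)$ along a point produces the trivial $S^V$-bundle, whose Thom space is $S^V$ and whose associated spectrum is $S$; the action of the monoid of self-equivalences of the representable matches the tautological action of $\GL{S}$ on $S$ coming from the action of $F(V)$ on $S^V$.

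The main obstacle will be verifying that $\N(M_\mathrm{LM})$ preserves colimits. The principal input here is Proposition~\ref{L-Mquotient}, which identifies $M_\mathrm{LM}(B\varphi) \simeq S/G$ for every group-like $\ainfty$ map $\varphi \colon G \to \GL{S}$; comparing with Proposition~\ref{quotient}, this already shows that $\N(M_\mathrm{LM})$ and $\Mgeo$ agree on all objects of the form $BG \to B\GL{S}$. To upgrade this to full colimit preservation, one may either analyze the filtered colimit description $M_\mathrm{LM} = \colim_V \Sigma^{-V}\Sigma^\i (S^V/F(V) \text{-twisted})$ directly --- using that both the bar construction and the suspension spectrum functor commute with homotopy colimits in the space direction --- or appeal to the May--Sigurdsson parametrized perspective via Remark~\ref{rem-may-sig}, where the Thom spectrum is obtained as a base change of a universal parametrized spectrum and colimit preservation is immediate from the adjointness of $p_!$ and $p^*$. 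With colimit preservation and the Yoneda restriction condition in place, Corollary~\ref{t-co-axiom-geom-thom-functor} furnishes an equivalence $\N(M_\mathrm{LM}) \simeq \Mgeo$ of functors, and in particular $M_\mathrm{LM}(f) \simeq \Mgeo(f)$ for every $f \colon X \to B\GL{S}$, as claimed.
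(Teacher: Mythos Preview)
Your approach is correct in outline but takes a more elaborate route than the paper. The paper's proof is short and direct: since the statement only asserts an equivalence of \emph{objects} $M_\mathrm{LM}f\simeq\Mgeo f$, one simply cites colimit preservation of $M_\mathrm{LM}$ from \cite[IX.4.3]{LMS:esht} (together with the built-in colimit preservation of $\Mgeo$) to reduce to connected $X$. Then $X\simeq BG$ for a group-like $A_\infty$ space $G$, the map $f$ deloops an $A_\infty$ map $G\to\GL{S}$, and Propositions~\ref{L-Mquotient} and~\ref{quotient} identify both sides with $S/G$. That is the whole argument.

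You in fact record this argument in the middle of your proposal (``this already shows that $\N(M_\mathrm{LM})$ and $\Mgeo$ agree on all objects of the form $BG\to B\GL{S}$''), but then continue toward the axiomatic characterization of Corollary~\ref{t-co-axiom-geom-thom-functor} in the style of Proposition~\ref{t-pr-thom-spectra-equivalent}. That route would yield a stronger conclusion, namely a natural equivalence of \emph{functors} $\N(M_\mathrm{LM})\simeq\Mgeo$, at the cost of verifying the Yoneda-restriction condition: not merely that $M_\mathrm{LM}$ of the path fibration is $S$, but that the induced $\GL{S}$-action is the canonical one. Your sketch of this (pulling back the universal quasifibration over a point and tracking the $F(V)$-action on $S^V$) is the right idea, and indeed Proposition~\ref{L-Mquotient} is close to encoding this action; but it is extra work not demanded by the corollary as stated. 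Also note that colimit preservation of $M_\mathrm{LM}$ is a classical fact one can simply cite from \cite{LMS:esht}, so it is not really the ``main obstacle'' here.
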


\begin{proof}
A basic property of the Thom spectrum functor $M_{\mathrm{LM}}$ is that it
preserves colimits \cite[IX.4.3]{LMS:esht}.  Thus, we can assume that
$X$ is connected.  In this case, $X\heq BG$ for some group-like
$A_\i$ space $G$, and $f\colon BG\to BGL_1 S$ is the delooping of an $A_\i$
map $G\to GL_1 S$. Hence $\Mgeo f\heq S/G$ by Proposition \ref{L-Mquotient}
and $M_\mathrm{LM}f\heq \Mgeo f$ by Proposition \ref{quotient}.
\end{proof}

\subsection{The algebraic Thom spectrum functor as a colimit}
\label{sec:algebr-thom-spectr}

We sketch another approach to the comparison of the ``geometric'' and
``algebraic'' Thom spectrum functors.  This approach has the
advantage of giving a direct comparison of the two functors.  It has
the disadvantage that it does 
not characterize the Thom spectrum functor among functors 
\[
   \TT_{/B\GL{R}} \to \Mod{R},
\]
and it does not exhibit the conceptual role played by Morita
theory.  Instead, it identifies both functors as colimits.

Suppose that $R$ is an $S$-algebra.   Let $\Rmod$ be the associated
$\i$-category of $R$-modules, let $\Rwe$ be the the sub-$\i$-groupoid
of $R$-lines, and let $j\colon \Rwe \to \Rmod$ denote the inclusion.
For a space $X$, the ``geometric'' Thom spectrum functor sends a
map $f\colon \Sing{X}^{\op} \to \Line{R}$ to 
\[
    \colim (\Sing{X}^{\op} \xra{\Sing{f}}\Line{R}  \xra{j}
    \Rmod).
\]

As in~\cite[\S 3]{units-sma}, let $G$ be a cofibrant
replacement of $\GL{R}$ as a monoid in $*$-modules.  By definition of
$\Line{R}$, we have an equivalence $\Sing{B_{\boxtimes}{G}}  \heq
\Rwe$.  But observe that we  also have a natural equivalence
\[
       \Sing{B_{\boxtimes}{G}} \heq \Line{G}.
\]
That is, let $\Mod{G} = \N({\EKMM_{G}}^{\cf})$  be the
$\i$-category of $G$-modules and let $\Line{G}$ be the maximal
$\i$-groupoid generated by the $G$-lines, i.e., $G$-modules which
admit a weak equivalence to $G$.  By construction, $\Line{G}$ is
connected, and so equivalent to $B\Aut (G)\heq B_{\boxtimes}{G}$.

Recall that we have an equivalence of $\i$-categories  
\begin{equation} \label{eq:75}
      \Gpd_{\i/\Line{G}} \heq \Mod{G}.
\end{equation}
The key observation is the following.  Let $k\colon \Line{G}\to \Mod{G}$
denote the tautological inclusion.  To a map of $\i$-groupoids
\[
   f\colon X^{\op} \to \Line{G},
\]
we can associate the $G$-module
\[
      P_{f} =  \colim (X^{\op} \xra{f} \Line{G}\xra{k} \Mod{G} ).
\]
Inspecting the proof of~\cite[2.2.1.2]{HTT} implies that the functor
$P\colon \Gpd_{\i/\Line{G}}\to\Mod{G}$ gives the equivalence
\eqref{eq:75}.

In other words, if $f\colon X\to B_{\boxtimes}{G}$ is a fibration of $*$-modules,
then we can form $P$ as in the pullback along $E_{\boxtimes}{G} \to
B_{\boxtimes}{G}$.  Alternatively, we can form 
\[
   \Sing{f}\colon \Sing{X}\to \Sing{B_{\boxtimes}{G}} \heq \Line{G},
\]
and then form $P_{\Sing{f}} = \colim (k \Sing{f}),$ and obtain
an equivalence of $G$-modules
\[
      P_{\Sing{f}} \heq P.
\]

\begin{Proposition}\label{prop:algebraic-colimit}
Let $f\colon X\to B_{\boxtimes}{G}$ be a fibration of $*$-modules.
The ``algebraic'' Thom spectrum functor sends $f$ to  
\[
   \colim (\Sing{X}^{\op} \xra{\Sing{f}} \Sing{B_{\boxtimes}{G}} \heq \Line{G}
   \xra{k} \Mod{G} \xra{\splus} \Mod{\splus G} \xra{\Smash_{\splus G}
   R} \Rmod ).
\]
\end{Proposition}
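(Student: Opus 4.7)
The plan is to chain together three identifications: the definition of $\Malg$, the equivalence $\Gpd_{\i/\Line{G}} \heq \Mod{G}$ discussed just before the proposition statement, and the fact that both $\splus$ and $(-)\Smash^{L}_{\splus G} R$ preserve colimits. First I would unwind the definition: by construction of $\Malg$ in \S\ref{sec:revi-algebr-thom}, the value on a fibration $f\colon X\to B_{\boxtimes}G$ of $*$-modules is $\splus P \Smash^{L}_{\splus G} R$, where $P$ is the pullback of $E_{\boxtimes}G\to B_{\boxtimes}G$ along $f$, regarded as a (cofibrant) $G$-module.

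Next I would invoke the key identification made immediately above the proposition: passing to singular complexes gives an equivalence of $G$-modules
\[
P \heq P_{\Sing f} \eqdef \colim\bigl(\Sing X^{\op}\xra{\Sing f}\Line{G}\xra{k}\Mod{G}\bigr),
\]
which is a direct consequence of the equivalence $\Gpd_{\i/\Line{G}}\heq\Mod{G}$ together with the computation of this equivalence via the colimit functor $P$ (obtained from inspecting~\cite[2.2.1.2]{HTT}). This replaces the point-set pullback with the intrinsic $\i$-categorical colimit and is the conceptual heart of the argument.

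It then remains to commute $\splus$ and $(-)\Smash^{L}_{\splus G} R$ past the colimit. Both are left adjoints of $\i$-categorical functors: $\splus\colon\Mod{G}\to\Mod{\splus G}$ is the stabilization functor (left adjoint to $\Omega^{\i}_{-}$, as in \S\ref{sec:infinity-category-r}), and tensoring an $(\splus G,R)$-bimodule is left adjoint to restriction of scalars along $\splus G\to R$. Consequently both preserve all colimits, and we may compute
\[
\splus P \Smash^{L}_{\splus G} R \heq \splus\bigl(\colim\,k\circ\Sing f\bigr)\Smash^{L}_{\splus G} R \heq \colim\bigl((\splus\circ k\circ\Sing f)\Smash^{L}_{\splus G} R\bigr),
\]
which is exactly the colimit in the statement of the proposition.

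The main obstacle is the middle step, namely verifying that the point-set pullback $P$ really does compute the $\i$-categorical colimit $P_{\Sing f}$. This depends on knowing that the equivalence of $\i$-categories $\Gpd_{\i/\Line{G}}\heq\Mod{G}$ coming from~\cite[2.2.1.2]{HTT} is implemented by the colimit functor $P$ of the excerpt, together with the comparison of~\cite[3.7]{units-sma} that relates the point-set model categories of $*$-modules over $B_{\boxtimes}G$ and of $G$-modules to their $\i$-categorical counterparts. Once that naturality statement is in hand, the rest of the argument is formal manipulation of left adjoints.
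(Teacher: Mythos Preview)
Your proposal is correct and follows exactly the paper's own proof: identify $P$ with $\colim(k\circ\Sing f)$ using the discussion preceding the proposition, then commute $\splus$ and $(-)\Smash_{\splus G}R$ past the colimit. The paper's version is terser but structurally identical; your added justification that both functors are left adjoints is a reasonable elaboration of what the paper leaves implicit.
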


\begin{proof}
We have 
\begin{equation}\label{eq:1}
   P \heq \colim (\Sing{X}^{\op} \xra{\Sing{f}} \Sing{B_{\boxtimes}{G}} \heq \Line{G}
   \xra{k} \Mod{G}),
\end{equation}
and so 
\begin{align*}
    Mf = \splus P \Smash_{\splus G} R & \heq 
 \splus \colim (k\Sing{f}) \Smash_{\splus G}R \\
& \heq \colim (\splus k \Sing{f}) \Smash_{\splus G} R \\
& \heq \colim (\splus k \Sing{f} \Smash_{\splus G} R).
\end{align*}
\end{proof}

From this point of view, the coincidence of the two Thom spectrum
functors amounts to the fact that diagram 
\[
\xymatrix{
{X}^{\op} \ar[r]^{f}
 \ar[dr]_{f}
&
\Line{G}
 \ar[d]_{\heq}^{\splus (\slot) \Smash_{\splus G} R}
 \ar[rr]^{k}
& & 
{\Mod{G}}
 \ar[d]^{\splus (\slot) \Smash_{\splus G} R}
\\
& {\Rwe} 
 \ar[rr]_{j}
& & 
{\Rmod}
}
\]
evidently commutes.


\begin{thebibliography}{EKMM96}
\bibitem[ABG10]{ABG:tcu}
Matthew Ando, Andrew Blumberg, and David Gepner.
\newblock Twists of ${K}$-theory and ${T}{M}{F}$.
\newblock {\em Proc. Symp. Pure Appl. Math}, 81, 2010,
  http://arxiv.org/abs/1002.3004.

\bibitem[ABGHR1]{units-sma}
Matthew Ando, Andrew J. Blumberg, David Gepner, Michael Hopkins, and
Charles Rezk.
\newblock Units of ring spectra and {T}hom spectra via structured ring spectra.
\newblock Preprint.

\bibitem[Blum05]{Blumberg-thesis} A.~J. Blumberg \newblock {\em
Progress towards the calculation of the {$K$}-theory of Thom spectra}
\newblock PhD thesis, University of Chicago, 2005

\bibitem[BCS08]{Blumberg-Cohen-Schlichtkrull} A.~J. Blumberg,
R. Cohen, and C. Schlichtkrull 
\newblock Topological Hochschild homology of Thom spectra and the free loop space. 
\newblock Geom. and Top. {\bf 14} (2010), 1165–-1242.

\bibitem[BV73]{MR0420609} J.~M. Boardman and R.~M. Vogt.  \newblock
{\em Homotopy invariant algebraic structures on topological spaces}.
\newblock Springer-Verlag, Berlin, 1973.  \newblock Lecture Notes in
Mathematics, Vol. 347.

\bibitem[EKMM96]{EKMM} A.~D. Elmendorf, I.~Kriz, M.~A. Mandell, and
J.~P. May.  \newblock {\em Rings, modules, and algebras in stable
homotopy theory}, volume~47 of {\em Mathematical surveys and
monogrphs}.  \newblock American Math. Society, 1996.

\bibitem[Jo08]{Niles} Niles Johnson.  
\newblock Morita theory for derived categories: A bicategorical approach.
\newblock Preprint, arXiv:math.AT/0805.3673.

\bibitem[Joy02]{MR1935979} A.~Joyal.  \newblock Quasi-categories and
{K}an complexes.  \newblock {\em J. Pure Appl. Algebra},
175(1-3):207--222, 2002.  \newblock Special volume celebrating the
70th birthday of Professor Max Kelly.

\bibitem[Lin13]{Lind}
John~A Lind.
\newblock Diagram spaces, diagram spectra and spectra of units.
\newblock {\em Algebr. Geom. Topol.}, 13(4):1857--1935, 2013,
  \url{http://arxiv.org/abs/0908.1092}.

\bibitem[LMSM86]{LMS:esht} L.~G. Lewis, Jr., J.~P. May,
M.~Steinberger, and J.~E. McClure.  \newblock {\em Equivariant stable
homotopy theory}, volume 1213 of {\em Lecture Notes in Mathematics}.
\newblock Springer-Verlag, Berlin, 1986.  \newblock With contributions
by J. E. McClure.

\bibitem[HA]{HA} Jacob Lurie. 
\newblock Higher Algebra, version of August 2012,
\newblock \url{http://www.math.harvard.edu/~lurie/papers/}.

\bibitem[HTT]{HTT} Jacob Lurie.  
\newblock Higher Topos Theory.
\newblock Annals of Mathematics Studies {\bf 170}.
\newblock Princeton University Press, 2009.

\bibitem[MQRT77]{MQRT:ersers} J.~P. May.  \newblock {\em {$E\sb{\infty
}$} ring spaces and {$E\sb{\infty }$} ring spectra}.  \newblock
Springer-Verlag, Berlin, 1977.  \newblock With contributions by Frank
Quinn, Nigel Ray, and J\o rgen Tornehave, Lecture Notes in
Mathematics, Vol. 577.

\bibitem[MMSS01]{MR1806878} M.~A. Mandell, J.~P. May, S.~Schwede, and
B.~Shipley.  \newblock Model categories of diagram spectra.  \newblock
{\em Proc. London Math. Soc. (3)}, 82(2):441--512, 2001.

\bibitem[MS06]{MR2271789} J.~P. May and J.~Sigurdsson.  \newblock {\em
Parametrized homotopy theory}, volume 132 of {\em Mathematical Surveys
and Monographs}.  \newblock American Mathematical Society, Providence,
RI, 2006.

\bibitem[Qui68]{MR0238322}
Daniel~G. Quillen.
\newblock The geometric realization of a {K}an fibration is a {S}erre
  fibration.
\newblock {\em Proc. Amer. Math. Soc.}, 19:1499--1500, 1968.

\bibitem[SS04]{MR1928647} S. Schwede and B. Shipley.  \newblock Stable
  model categories are categories of modules. \newblock {\em Toplogy},
  42(1):103--153, 2004.

\bibitem[Sch04]{MR2122154} Stefan Schwede.  \newblock Morita theory in
abelian, derived and stable model categories.  \newblock In {\em
Structured ring spectra}, pages 33--86. London Math. Soc. Lecture Note
Ser., No. 315. Cambridge Univ. Press, London, 2004.

\end{thebibliography}

\def\cprime{$'$}

\end{document}